\newtheorem{theorem}{Theorem}[section]
\newtheorem{cor}[theorem]{Corollary}
\newtheorem{lem}[theorem]{Lemma}
\newtheorem*{hyp}{Hypothesis}
\newtheorem{con}[theorem]{Conjecture}
\newtheorem{prop}[theorem]{Proposition}
\theoremstyle{definition}
\newtheorem{example}[theorem]{Example}
\newtheorem{defi}[theorem]{Definition}
\newtheorem{rem}[theorem]{Remark}
\newcommand{\tw}{\mathbf{tw}}
\renewcommand{\thefootnote}{\fnsymbol{footnote}}
\numberwithin{equation}{section}
\DeclareMathOperator{\colim}{colim}
\DeclareMathOperator{\g}{\mathfrak g}
\DeclareMathOperator{\C}{\mathcal C}
\DeclareMathOperator{\CE}{CE}
\DeclareMathOperator{\B}{B}
\DeclareMathOperator{\U}{U}
\DeclareMathOperator{\HH}{H}
\DeclareMathOperator{\Harr}{Harr}
\DeclareMathOperator{\DGA}{DGA}
\DeclareMathOperator{\DGC}{DGC}
\DeclareMathOperator{\DGCC}{DGCC}
\DeclareMathOperator{\DGL}{DGL}
\DeclareMathOperator{\op}{op}
\DeclareMathOperator{\Hom}{Hom}
\DeclareMathOperator{\End}{End}
\DeclareMathOperator{\HCE}{HCE}
\DeclareMathOperator{\co}{co}
\DeclareMathOperator{\IM}{Im}
\def\MMod{\operatorname{\!-Mod}\nolimits}
\def\CComod{\operatorname{\!-Comod}\nolimits}
\DeclareMathOperator*{\DGVect}{DGVect}
\DeclareMathOperator*{\res}{res}
\DeclareMathOperator*{\DGVectpc}{DGVect_{pc}}
\DeclareMathOperator*{\DGVectop}{DGVect^{op}}
\DeclareMathOperator*{\RHom}{\operatorname{RHom}}
\DeclareMathOperator*{\Perf}{\operatorname{Perf}}
\DeclareMathOperator*{\holim}{\operatorname{holim}}
\DeclareMathOperator*{\hocolim}{\operatorname{hocolim}}
\DeclareMathOperator{\MC}{MC}
\DeclareMathOperator{\str}{str}
\DeclareMathOperator{\Str}{tr}
\newcommand{\pr}{\mathbf{pr}}
\newcommand{\ad}{\mathbf{ad}}
\newcommand {\emptycomment}[1]{}
\def\Z{\mathbb{Z}}
\def\id{\operatorname{id}}
\newcommand{\bk}{{\mathbf{k}}}
\newcommand{\bd}{{\mathbf{d}}}
\newcommand{\huaL}{\mathbb{L}}
\newcommand{\huaS}{\mathbf{S}}
\newcommand{\huaT}{\mathbf{T}}
\newcommand{\huaTT}{\mathcal{T}}
\newcommand{\huathick}{\mathbf{thick}}
\newcommand{\huaB}{\mathcal{B}}
\newcommand{\huaC}{\mathcal{C}}
\thanks{}
\begin{document}
\begin{abstract}A well-known and old result of Hazewinkel and Koszul states that the cohomology of a finite-dimensional Lie algebra is isomorphic, up to a suitable shift, to its twisted homology, a Lie-theoretical version of Poincar\'e duality. This paper establishes an analogue of this result for \emph{graded} (or super) Lie algebras and, more generally, differential graded Lie algebras.
 Closely related to this result is a calculation of the cohomology of a graded Lie algebra $\g$ with coefficients in its universal enveloping algebra $\U(\g)$ as a one-sided module. This cohomology turns out to be one-dimensional and serves as a dualizing module for the cohomology of $\g$. Moreover, it is shown that for a unimodular graded Lie algebra $\g$, the derived category of $\U(\g)$ has a Calabi-Yau structure. As a consequence, the category of rational infinity local systems on a simply connected topological space with totally finite-dimensional rational homotopy groups, has a Calabi-Yau structure, a generalization of Poincar\'e  duality for elliptic spaces.

\end{abstract}
\title[Unimodular [Duality for graded Lie algebras]{Duality for graded Lie algebras}
\author{Andrey Lazarev}
	\address{Department of Mathematics and Statistics, Lancaster University, Lancaster LA1 4YF, UK}
	\email{a.lazarev@lancaster.ac.uk}
		
	\author{Rong Tang}
	\address{Department of Mathematics, Jilin University, Changchun 130012, Jilin, China, and Department of Mathematics and Statistics, Lancaster University, Lancaster LA1 4YF, UK}
	\email{r.tang8@lancaster.ac.uk}
\renewcommand{\thefootnote}{}
\footnotetext{2020 Mathematics Subject Classification.
17B56, 
18N40
}
\keywords{Graded Lie algebras, homological perturbation, Koszul duality, comodules, Berezinian}
\maketitle
\tableofcontents
\section{Introduction}
Given an augmented differential graded (dg) algebra $A$ over a field $\bk$ and its bar-construction coalgebra $\B(A)$, the categories of dg $A$-modules and of $\B(A)$-comodules are equivalent as $\infty$-categories (or, more precisely, there is a Quillen equivalence between them as model categories). This is one of the manifestations of dg Koszul duality \cite{Pos} and in this paper we study certain duality functors in these $\infty$-categories. On the algebra side, it is the functor $$d:X\mapsto d(X):={\RHom}_{A}(X,A)$$ and on the coalgebra side it is a functor that is simply the linear dual on the subcategory of \emph{finite-dimensional} dg comodules but is rather more involved for general comodules. It was shown in \cite{BCL} that these duality functors correspond to each other across Koszul duality. The right $A$-module $\RHom_{A}(\bk,A)$ is called the (one-sided) dualizing complex for an augmented dg algebra $A$. Following \cite{BCL}, we call  $A$ a \emph{Gorenstein} dg algebra if its dualizing complex is quasi-isomorphic to the one-dimensional $\bk$-vector space with a trivial right $A$-action (i.e. given by the augmentation map).

We will consider the situation when $A$ is a dg Hopf algebra ({such as the dg algebra of singular chains on based loops on a topological space}). Our first result (Theorem \ref{thm:Serre}) states that the derived category $D(A)$ of a dg Hopf algebra $A$ has an open Serre duality  given by a pair
		$(\Perf(A),\operatorname{pvd}(A))$ of subcategories of $D(A)$ and the open Serre functor given by tensoring with the dualizing complex. 	Here $\Perf(A)$ consists of perfect (compact) objects in $D(A)$ and $\operatorname{pvd}(A)$ is formed by $A$-modules with finite-dimensional cohomology. If $A$ is Gorenstein, then $D(A)$ has a further refinement, called the (open) Calabi-Yau (CY) structure. That means, roughly, that if  $N\in \operatorname{pvd}(A)$ and $M\in \Perf(A)$, then $\Hom_{D(A)}(N,M)$ and $\Hom_{D(A)}(M,N)$ are $\bk$-dual to each other up to a shift. Note that this notions of an open Serre duality and
 an open CY structure are generalizations of the usual ones (when the two subcategories coincide). This was explicitly spelled out in \cite{BCL} but was known for a long time, at least since Keller's paper \cite{Kel}, cf. especially Lemma 3.4. of op.cit. The notion of a triangulated CY category stems from  derived categories of CY varieties and Kontsevich's homological mirror symmetry, cf. \cite{Kon}.

We next specialize to the case $A=\U(\g)$, the universal enveloping algebra of a graded Lie algebra $\g$ that we assume to be finite-dimensional. We show Theorem \ref{thm:super-g-trace} that the dualizing complex for $\U(\g)$ has one-dimensional cohomology and the action of $\g$ on it is through the trace of the adjoint representation. This is the main result of this paper and, though its formulation is simple, the proof is surprisingly involved. Namely, one-dimensionality follows from a relatively straightforward argument with spectral sequences but the calculation of the $\g$-action requires an application of homological perturbation theory and minimal models for $A_\infty$-modules. This result can also be viewed as a $\g$-equivariant analogue of the well-known homological interpretation of the Berezinian (or superdeterminant).

We also have a similar result for a \emph{connective} dg Lie algebra (such as the Lie-Quillen rational model of a topological space).

A consequence of this is that the universal enveloping algebra of a graded unimodular Lie algebra is Gorenstein and so, its derived category is CY with respect to subcategories of perfect and proper dg $\g$-modules. Applied to the Lie-Quillen model of a simply-connected topological space $A$ with totally finite-dimensional rational homotopy groups, we obtain that the category of infinity local systems on $X$ (or the derived category of cohomologically locally constant complexes of sheaves on $X$) have a CY property, a generalization of Poincar\'e duality for elliptic spaces \cite{FHT}.

This also generalizes the fact that the universal enveloping algebra $\U(\g)$ of ordinary (ungraded) unimodular Lie algebra $\g$ is Gorenstein and CY cf. \cite{BCL, HR}. One essential difference between the graded and ungraded cases is that $\U(\g)$ is homologically smooth for an ungraded $\g$ whereas this is not true for graded Lie algebras (e.g. for a one-dimensional graded Lie algebra $\g$ concentrated in an odd degree, $\U(\g)$ is an exterior algebra on one generator, and it has an infinite global dimension).

These results admit an appropriate translation to the Koszul dual side, for dg comodules over the bar-construction $\B(\U(\g))$. In fact, it is more convenient to pass to the CE coalgebra $\CE_\bullet(\g)$ which is weakly equivalent to $\B(\U(\g))$ (and thus, has an equivalent coderived category) but is much smaller and easier to work with. We show (Theorem \ref{derived-CE-comod}) that $\CE_\bullet(\g)$ is derived dual as a comodule over itself, to a certain twist of itself (up to a shift) and, in the case of a unimodular Lie algebra $\g$, this twist is trivial. This is a far-reaching generalization of old results of Hazewinkel \cite{Haz} and Koszul \cite{Kos}. In this connection, we mention that there is another generalization of Poincar\'e duality for ordinary Lie algebras where the latter are replaced by Lie algebroids \cite{Wein}, or, still more generally, Lie-Reinhart pairs \cite{Hue} and $Q$-manifolds \cite{Vor}. Investigating the relationship between these results and ours is an interesting open problem.

Next, we study a potential generalization of the main result described above, to the case of dg Lie algebras. In this situation, it is more convenient to switch to a more flexible language of $L_\infty$-algebras (or strong homotopy Lie algebras). An $L_\infty$-algebra $(\g,\ell)$
is most concisely defined through its representing dg algebra $\CE^\bullet(\g)$ which is a formal power series algebra  $\hat{\huaS}(\g^*[-1])$ on the shifted dual space to $\g$ supplied with a continuous derivation $\ell$ having no constant term. In the case of a  graded Lie algebra $\g$, $\CE^\bullet(\g)$ is the ordinary Chevalley-Eilenberg (CE) algebra of $\g$ and $\ell$ has only the quadratic term. The $\bk$-linear continuous dual to $\CE^\bullet(\g)$ is denoted by $\CE_\bullet(\g)$ and agrees with the CE homological complex of $\g$ when $\g$ is a graded Lie algebra. In this case, $\CE_\bullet(\g)$ is a conilpotent cocommutative dg coalgebra. Strong homotopy Lie algebras arise in a great variety of contexts such as deformation quantization \cite{Kon'}, string field theory \cite{Zwi} and others. Their representing dg algebras can be viewed as linear duals to fibrant objects in the model category of cocommutative conilpotent dg coalgebras, cf. \cite{Hin, Pri} for this point of view.

Given an $L_\infty$-algebra $(\g,\ell)$, the divergence $\nabla(\ell)$ of the derivation $\ell$ is a CE 1-cocycle  which, in the case of a graded Lie algebra, corresponds to the trace of the adjoint representation of $\g$.  Thus, it is natural to conjecture that the derived dual to $\CE_\bullet(\g)$ as a comodule over itself, is, up to an appropriate shift, its twist by the cocycle $\nabla(\ell)$. Note that Theorem \ref{derived-CE-comod} is a special case of this conjecture when $\g$ is a graded Lie algebra.

Working on the Koszul dual side of dg $\CE_\bullet(\g)$-comodules, we develop appropriate tools (having independent interest) for proving this conjecture and obtain some partial results. These complement the results obtained on the side of $\g$-modules described above in the sense that neither of them are special cases of the other but both are special cases of above conjecture.

The paper is organized as follows. Section \ref{sec:recollection} contains background on dg Koszul duality between modules over a dg Lie algebra and comodules over its CE coalgebra . In Section \ref{section:Hopf} we study the derived duality functor in the derived category $D(A)$ of a dg Hopf algebra $A$ and, in particular, prove that if $A$ is Gorenstein, then $D(A)$ is CY (Corollary \ref{cor:HopfCY}). In Section \ref{sec:dualizing} it is proved that the dualizing complex for a proper dg Lie algebra has one-dimensional cohomology. Section \ref{sec:technical}, which is the technical heart of the present paper, is dedicated to proving that the action of finite-dimensional graded Lie algebra on its dualizing complex is via the (super) trace of its adjoint representation. This section also contains topological applications mentioned above. In Section \ref{sec: Laurent}  integrals on rings of formal Laurent series are discussed and a formula for the adjoint of a derivation is obtained (Proposition \ref{prop:adjoint}); this formula must be a standard result in differential geometry, at least in the ungraded context but, since we were unable to find a reference, a complete proof was given. Section \ref{sec:strange assumption} contains results on the structure of the derived dual to $\CE_\bullet(\g)$  where $\g$ is an $L_\infty$-algebra satisfying certain (unfortunately, quite strong) assumptions; these are Propositions \ref{prop:CEdual} and \ref{prop:CEdual'}. Of some independent interest here  is a construction of the derived pseudocompact completion. The last section contains calculations with $L_\infty$-algebras of small dimensions. In Appendix \eqref{A}, we prove an equivalence of two definitions of a graded $n$-CY category.

\subsection{Notation and conventions}
Throughout this paper, we will denote by $\bk$ a fixed field of characteristic zero. Unmarked tensor products and Homs will be understood as taken over $\bk$. We will mostly be working in the category $\DGVect$ of differential $\Z$-graded (dg) vector spaces over $\bk$; our grading will always be cohomological and for a dg vector space $V$, we write $\HH^\bullet(V)$ for its cohomology; the degree of a homogeneous element $v\in V$ is denoted by $|v|$. The shift $V[k]$ of a graded vector space $V$ is defined as $V[k]^i=V^{i+k}$ and an element of $V[k]$ is denoted by $v[k],v\in V$. An (associative, unital) dg algebra is a monoid in $\DGVect$ and a (coassociative, counital) dg-coalgebra is a comonoid in $\DGVect$. The category of dg-algebras will be denoted by $\DGA$. A graded dg algebra $A$ is \emph{augmented} if the unit map $\bk\to A$ admits a retract $\epsilon:A\to\bk$, the augmentation.  Maps between augmented algebras are supposed to preserve augmentation. The augmentation ideal $\bar{A}$ is the kernel of $\epsilon$. The category of augmented dg-algebras will be denoted by $\DGA_*$. We will also work with the category $\DGL$ of dg Lie algebras. Given a dg Lie algebra $\g$, we denote by $\U(\g)$ its universal enveloping dg algebra.

Given two dg modules $M, N$ over a dg algebra $A$ (assumed to be left unless indicated otherwise), we write $\Hom_A(M,N)$ for the dg vector space of $A$-homomorphisms from $M$ to $N$ and $\RHom_A(M,N)$ for the corresponding derived functor (which is, therefore, an object of $D(\bk)$).

A \emph{pseudocompact} (pc) dg vector space is a projective limit of finite dimensional vector spaces and the category of pc dg vector  spaces with continuous chain maps will be denoted by $\DGVectpc$. The functor of linear duality $V\mapsto V^*$ establishes an anti-equivalence from $\DGVect$ to $\DGVectpc$ with the inverse functor being the functor of continuous duality (which we will denote by the same symbol $W\mapsto W^*$ for a pc dg vector space $W$). Therefore, $\DGVectop$, the opposite category to $\DGVect$ is equivalent to $\DGVectpc$ and we will frequently not distinguish between these two categories. In particular, the data of a (coassociative, counital) dg coalgebra are equivalent under this correspondence to that of a monoid in $\DGVectpc$, i.e. a pc dg algebra. A dg coalgebra is \emph{conilpotent} if its dual pc dg algebra is $\bk$-augmented and local, i.e. its augmentation ideal is a unique two-sided dg maximal ideal. The category of conilpotent dg coalgebras will be denoted by $\DGC_*$. Similarly, the category of conilpotent cocommutative dg coalgebras will be denoted by $\DGCC$.

A \emph{proper} dg vector space is one with totally finite-dimensional cohomology. Similarly, a dg algebra or a dg Lie algebra is proper if its cohomology is (totally) finite dimensional.

Given a dg (pc or not) algebra $A$, a Maurer-Cartan (MC) element in $A$ is an element $\xi\in A^1$ such that $d\xi+\xi^2=0$. The set of MC elements of $A$ is denoted by $\MC(A)$. Given a dg $A$-bimodule $(M,d)$ and an MC element $\xi\in\MC(A)$, define the \emph{right twist} $M^{[\xi]}$ of $M$ to be the \emph{left} dg $A$-module with underlying graded vector space $M$ supplied with the given left action of $A$ and the twisted differential $d^{[\xi]}$ given by by $d^{[\xi]}(m)=d(m)-(-1)^{|m|} m\xi$ for $m\in M$.

Similarly, a \emph{left twist} $~^{[\xi]}M$ of $(M,d)$ is the \emph{right} dg $A$-module with the same underlying graded vector space and left action as $M$ and supplied with the differential  $~^{[\xi]}d(m)=d(m)+\xi m$ for $m\in A$. This construction is most commonly applied to the case when $M=A$ or when $M$ is a free module over $A$ of the form $A\otimes V$ for a dg (possibly pc) vector space $V$. See \cite[Section 6.1]{BL-0} for more details about   right and left  twist of  dg modules.

\section{Recollection: associative and Lie-commutative Koszul duality}\label{sec:recollection}
In this section we recall Koszul duality between associative algebras and coalgebras \cite{Pos}, between Lie algebras and cocommutative coalgebras \cite{Hin} and a relationship between the two.

Given an augmented dg algebra $A$, we denote by $\B(A)$ its bar-construction, which is a cofree conilpotent coalgebra $\huaT(\bar{A}[1])$ with the differential $d_1+d_2$ where $d_1$ is induced by the internal differential in $A$ and $d_2$ is  induced by the degree $1$ linear map $\bar{A}[1]\otimes \bar{A}[1]\to \bar{A}[1]$ which is, in turn, induced by the multiplication $A\otimes A\to A$. An equivalent datum is the pc dg algebra $\hat{\huaT}(\bar{A}^*[-1])$ with a quadratic differential $\bar{A^*}[-1]\to\bar{A^*}[-1]\otimes \bar{A^*}[-1]$ called the dual bar-construction of $A$.

Similarly, given a conilpotent dg coalgebra $C$, we denote by $\Omega (C)$ its cobar-construction, which is a free algebra $\huaT(\bar{C}[-1])$ with the differential $d_1+d_2$ where $d_1$ is induced by the internal differential in $C$ and $d_2$ is induced by the degree $1$ linear map $\bar{C}[-1]\to\bar{C}[-1]\otimes\bar{C}[-1]$ which is, in turn, induced by the comultiplication $C\to C\otimes C$. The functor $\Omega$ is left adjoint to the functor $\B$.

The category $\DGA_*$ has a model structure with weak equivalences being quasi-isomorphisms and fibrations being surjective maps. The category $\DGC_*$ has a model structure with weak equivalences being maps of dg coalgebras that become quasi-isomorphisms upon applying the functor $\Omega$ and cofibrations being injective maps.

The fundamental result is that these two model categories are Quillen equivalent:
\begin{theorem}
	The pair of adjoint functors $(\Omega, \B)$ establishes a Quillen equivalence between $\DGC_*$ and $\DGA_*$.
\end{theorem}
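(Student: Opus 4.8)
The plan is to invoke the standard recognition criterion for Quillen equivalences, after isolating the one genuinely non-formal ingredient. Two observations about the two model structures organize the argument. First, every object of $\DGA_*$ is fibrant, since the augmentation $\epsilon\colon A\to\bk$ onto the terminal object is surjective and hence a fibration; dually, every object of $\DGC_*$ is cofibrant, the coaugmentation $\bk\to C$ out of the initial object being injective and hence a cofibration. Second, by the very definition of the model structure on $\DGC_*$, a map $f$ is a weak equivalence precisely when $\Omega f$ is a quasi-isomorphism, so $\Omega$ both preserves and reflects weak equivalences; in particular it reflects weak equivalences between cofibrant objects, which is one of the two hypotheses of the recognition criterion. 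The other hypothesis --- that the derived counit be a weak equivalence on every fibrant object --- becomes, since $\B A$ is already cofibrant and $A$ already fibrant, the single assertion that the counit $\varepsilon_A\colon\Omega\B A\to A$ is a quasi-isomorphism for every augmented dg algebra $A$. Granting that $(\Omega,\B)$ is a Quillen adjunction, the theorem is reduced to this.

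It remains to verify the Quillen adjunction and to prove the assertion. The adjunction $\Hom_{\DGA_*}(\Omega C,A)\cong\Hom_{\DGC_*}(C,\B A)$ is the classical one: both sides are naturally in bijection with the set of twisting cochains $C\to A$, i.e.\ Maurer--Cartan elements of the convolution dg algebra $\Hom(C,A)$, realized via the canonical inclusion $\bar C[-1]\hookrightarrow\Omega C$ and the canonical projection $\B A\twoheadrightarrow\bar A[1]$. For the Quillen adjunction it suffices that the right adjoint $\B$ preserve fibrations and acyclic fibrations. It preserves fibrations because a surjection $A\twoheadrightarrow A'$ of augmented dg algebras induces a surjection $\bar A\twoheadrightarrow\bar A'$ and hence a surjection $\huaT(\bar A[1])\twoheadrightarrow\huaT(\bar A'[1])$. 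It preserves acyclic fibrations once $\varepsilon_A$ is known to be a quasi-isomorphism: if $A\twoheadrightarrow A'$ is moreover a quasi-isomorphism, then in the naturality square for the counit the maps $\varepsilon_A$ and $\varepsilon_{A'}$ are quasi-isomorphisms, so $\Omega\B A\to\Omega\B A'$ is one by two-out-of-three, i.e.\ $\B A\to\B A'$ is a weak equivalence in $\DGC_*$.

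Thus everything hinges on the acyclicity of the reduced bar--cobar construction: that $\varepsilon_A\colon\Omega\B A\to A$ is a quasi-isomorphism. I would prove this by producing an explicit contracting homotopy. Let $s\colon A\to\Omega\B A$ send $1$ to $1$ and send $a\in\bar A$ to the length-one cobar word on the length-one bar word $[a]$; one checks directly that $s$ is a chain map (not an algebra map) with $\varepsilon_A\circ s=\id_A$. The remaining task is a degree $-1$ map $h\colon\Omega\B A\to\Omega\B A$ with $\delta h+h\delta=\id-s\varepsilon_A$, where $\delta$ is the total differential assembled from the internal differential of $A$, the bar part induced by the multiplication of $A$, and the cobar part induced by deconcatenation; $h$ is given by a classical explicit formula merging adjacent bar-blocks of a word in $\Omega\B A$. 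An alternative route filters $\B A$ by bar length --- an exhaustive, bounded-below filtration, so that the induced spectral sequence on $\Omega\B A$ converges --- and reduces the claim to the same statement for the \emph{cofree} conilpotent dg coalgebra $\huaT(\bar A[1])$ equipped with the internal differential alone, a more tractable special case. The one real obstacle in the whole theorem is the bookkeeping at this point: tracking Koszul signs and the interaction of the three parts of $\delta$ (plus the convergence check in the spectral-sequence variant). Once this is done, the two halves of the Quillen adjunction are in place and the recognition criterion delivers the Quillen equivalence; the unit $C\to\B\Omega C$ is then automatically a weak equivalence, although it can also be verified directly from the coaugmentation filtration of $C$, which is where conilpotence enters in an essential way.
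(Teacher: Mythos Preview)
The paper's own proof is simply a citation to Positselski \cite{Pos} and Lef\`evre-Hasegawa \cite{Lef}; no argument is given. Your proposal instead sketches the classical argument, and the overall architecture --- recognition criterion, all objects fibrant/cofibrant, $\Omega$ creates weak equivalences, reduce to acyclicity of the counit $\Omega\B A\to A$ --- is correct and is essentially what one finds in those references.

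There is one gap worth flagging. You argue that $\B$ preserves fibrations because a surjection $A\twoheadrightarrow A'$ induces a surjection $\huaT(\bar A[1])\twoheadrightarrow\huaT(\bar A'[1])$. But fibrations in $\DGC_*$ are not defined to be surjections; the paper specifies cofibrations (injections) and weak equivalences, so fibrations are determined by the right lifting property and are in general a strictly smaller class than surjections. The clean fix is to verify the Quillen adjunction on the left adjoint instead: an injection $C\hookrightarrow D$ of conilpotent coalgebras induces a map $\Omega C\to\Omega D$ which, on underlying graded algebras, is the inclusion of a free algebra on a subspace into a free algebra on a larger space, hence a cofibration in $\DGA_*$ (a standard cell attachment). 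Since $\Omega$ creates weak equivalences by definition, it preserves them, and therefore preserves acyclic cofibrations; thus $\Omega$ is left Quillen. With this modification your reduction to the quasi-isomorphism $\Omega\B A\xrightarrow{\sim}A$ goes through, and the two approaches you outline for that step (explicit contracting homotopy, or the bar-length filtration spectral sequence) are both standard and valid.
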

\begin{proof}
	The corresponding statement is proved in \cite[Section 9.3]{Pos} for unital dg algebras and curved conilpotent coalgebras; see also  an earlier work \cite[Th\'eor\`eme 1.3.1.2]{Lef}. The claimed statement is then straightforward; indeed, $\DGA_*$ is just the over category of the initial object in $\DGA$ and similarly $\DGC_*$ is (isomorphic to) the over category of the initial object in curved conilpotent coalgebras.
\end{proof}

A similar result holds when an augmented dg algebra $A$ is replaced by a dg Lie algebra and a conilpotent dg coalgebra $C$ by a \emph{cocommutative} conilpotent dg coalgebra.

Namely, given a dg algebra $\g$, denote by $\CE_\bullet(\g)$
its homological CE complex,
which is a cocommutative cofree conilpotent
 coalgebra $\huaS(\g[1])$ with the differential
 $d_1+d_2$ where $d_1$ is induced by the internal differential
 in $\g$ and $d_2$ is  induced by the degree $1$ linear map ${\g}[1]\otimes {\g}[1]\to {\g}[1]$ which is, in turn, induced by the Lie bracket $\g\otimes \g\to \g$. An equivalent datum is the pc dg algebra $\hat{\huaS}({\g}^*[-1])$ with a quadratic differential ${\g^*}[-1]\to {\g^*}[-1]\otimes {\g^*}[-1]$ called the cohomological CE complex of $\g$.

 Similarly, given a conilpotent cocommutative coalgebra $D$, its Harrison complex $\Harr^\bullet(C)$  is the free Lie algebra $\mathbf{L}(\bar{D}[-1])$
 with the differential $d_1+d_2$ where $d_1$ is induced by the internal differential in $D$ and $d_2$ is induced by the degree $1$ linear map $\bar{D}[-1]\to\bar{D}[-1]\otimes\bar{D}[-1]$ which is, in turn, induced by the comultiplication $D\to D\otimes D$. Note that $\Omega(D)$ is a dg Hopf algebra and $\Harr^\bullet(D)$ coincides with the complex of primitives in $\Omega(D)$.  The functor $\Harr^\bullet$ is left adjoint to the functor $\CE_\bullet$.

 The category $\DGL$ has a model structure where weak equivalences are quasi-isomorphisms and fibrations are surjections while $\DGCC$ has a model structure with weak equivalences and fibrations being transferred from $\DGC_*$. Then the following result holds.
.

 \begin{theorem}
 	The pair of adjoint functors $(\Harr^\bullet,\CE_\bullet)$ determine a Quillen equivalence between $\DGCC$ and $\DGL$.
 \end{theorem}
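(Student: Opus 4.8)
The plan is to check that $(\Harr^\bullet,\CE_\bullet)$ is a Quillen adjunction and then to verify the standard criterion for a Quillen equivalence — that the derived unit and counit are weak equivalences — using that every object $D$ of $\DGCC$ is cofibrant (the map $\bk\to D$ is injective, and injections are cofibrations) and every object $\g$ of $\DGL$ is fibrant (the map $\g\to 0$ is surjective, and surjections are fibrations), so that derived and ordinary (co)units coincide. Throughout I will use, as black boxes, two consequences of the PBW theorem in characteristic zero: the universal enveloping functor $\U$ both preserves and reflects quasi-isomorphisms; and, together with the fact recalled above that $\Harr^\bullet(D)$ is the Lie algebra of primitives of the dg Hopf algebra $\Omega(D)$, the identification of dg algebras $\Omega(D)\cong\U(\Harr^\bullet(D))$ for every conilpotent cocommutative dg coalgebra $D$ (indeed $\Omega(D)=\huaT(\bar D[-1])$ is the tensor algebra, i.e.\ the enveloping algebra of the free graded Lie algebra $\mathbb{L}(\bar D[-1])=\Harr^\bullet(D)$, and the cobar differential is the canonical derivation extending the Harrison differential).

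First, $\Harr^\bullet$ is a left Quillen functor: it takes an injection $D'\hookrightarrow D$ to a semifree extension $\mathbb{L}(\bar{D'}[-1])\hookrightarrow\mathbb{L}(\bar D[-1])$ of dg Lie algebras, which is a cofibration, and it preserves (in fact reflects) weak equivalences, since $f$ is a weak equivalence in $\DGCC$ iff $\Omega(f)$ is a quasi-isomorphism iff $\U(\Harr^\bullet f)$ is iff $\Harr^\bullet(f)$ is (using that $\U$ preserves and reflects quasi-isomorphisms); in particular it preserves acyclic cofibrations. It then remains to show that the unit $\eta_D\colon D\to\CE_\bullet\Harr^\bullet(D)$ and the counit $\varepsilon_\g\colon\Harr^\bullet\CE_\bullet(\g)\to\g$ are weak equivalences. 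The unit is a formal consequence of the counit: by the triangle identity $\varepsilon_{\Harr^\bullet(D)}\circ\Harr^\bullet(\eta_D)=\id$, so if every counit is a quasi-isomorphism then so is $\Harr^\bullet(\eta_D)$, whence $\eta_D$ is a weak equivalence because $\Harr^\bullet$ reflects them. Thus the theorem reduces to showing that $\varepsilon_\g\colon\Harr^\bullet\CE_\bullet(\g)\to\g$ is a quasi-isomorphism for every dg Lie algebra $\g$.

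For this I would bootstrap from the already-established bar--cobar equivalence for associative algebras. There is a classical quasi-isomorphism $\CE_\bullet(\g)\xrightarrow{\ \sim\ }\B(\U(\g))$ of conilpotent dg coalgebras (the antisymmetrization map). Applying the cobar functor $\Omega$, which sends quasi-isomorphisms of conilpotent dg coalgebras to quasi-isomorphisms, and using $\Omega(\CE_\bullet(\g))\cong\U(\Harr^\bullet\CE_\bullet(\g))$, one obtains
\[
\U(\Harr^\bullet\CE_\bullet(\g))\;\cong\;\Omega(\CE_\bullet(\g))\xrightarrow{\ \sim\ }\Omega(\B(\U(\g)))\xrightarrow{\ \sim\ }\U(\g),
\]
the last arrow being the counit of the $(\Omega,\B)$-adjunction, a quasi-isomorphism. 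This composite is $\U(\varepsilon_\g)$, so $\U(\varepsilon_\g)$ — and hence $\varepsilon_\g$, since $\U$ reflects quasi-isomorphisms — is a quasi-isomorphism. Alternatively one may argue directly: filter $\Harr^\bullet\CE_\bullet(\g)=\mathbb{L}(\overline{\huaS(\g[1])}[-1])$ by the polynomial weight inherited from $\huaS(\g[1])$; the associated graded is the Harrison complex of the cofree cocommutative coalgebra $\huaS(\g[1])$, which by Koszulity of the commutative operad has cohomology concentrated in weight one and equal to $\g$ there; the resulting spectral sequence degenerates and its edge homomorphism is $\varepsilon_\g$.

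The substantive point is this last statement — the counit $\Harr^\bullet\CE_\bullet(\g)\to\g$ is a quasi-isomorphism — everything else being formal once the PBW facts about $\U$ are granted. In the bootstrap approach the difficulty is displaced onto the already-available associative Koszul duality together with the classical coalgebra-level comparison $\CE_\bullet(\g)\simeq\B(\U(\g))$; in the direct approach it is exactly the Koszulity of the $\mathrm{Comm}$--$\mathrm{Lie}$ Koszul pair, which is a genuine homological computation. Accordingly I expect the main obstacle, on the self-contained route, to be the careful identification of the weight filtration, of its associated graded, and of the edge homomorphism with the counit, whereas the bootstrap route is shorter at the cost of importing the comparison quasi-isomorphism.
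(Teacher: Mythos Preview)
The paper does not prove this theorem at all; it simply cites Hinich \cite{Hin}. Your proposal is therefore not comparable to the paper's approach, but it is worth assessing on its own merits.

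Your overall strategy is sound and your reduction to the counit is clean. However, the bootstrap argument contains a genuine error: you assert that ``the cobar functor $\Omega$ \ldots\ sends quasi-isomorphisms of conilpotent dg coalgebras to quasi-isomorphisms''. This is false in general --- indeed, the entire reason $\DGC_*$ carries a model structure whose weak equivalences are \emph{defined} as maps becoming quasi-isomorphisms after $\Omega$ (rather than the quasi-isomorphisms themselves) is that these two classes differ. The fix is easy and already implicit in the paper: the antisymmetrization map $\CE_\bullet(\g)\to\B(\U(\g))$ is a \emph{filtered} quasi-isomorphism with respect to admissible filtrations (cf.\ the Proposition immediately following this theorem in the paper, whose proof is independent of the theorem), and filtered quasi-isomorphisms are weak equivalences by \cite[Section 6.10]{Pos}; then $\Omega$ preserves weak equivalences by definition. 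With that correction your bootstrap goes through, modulo checking that the composite you write down really is $\U(\varepsilon_\g)$ --- this is a routine unwinding of adjunctions but should be stated.

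Your alternative direct approach via the weight filtration and Koszulity of the $\mathrm{Comm}$--$\mathrm{Lie}$ pair is correct in outline and is essentially Hinich's own argument. As you note, the substantive content there is the Koszul resolution computation; everything else is formal.
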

 \begin{proof}
 	This is the main result of \cite{Hin}.
 \end{proof}
So,  associated to a dg Lie algebra $\g$ are two conilpotent  dg coalgebras: $\CE_\bullet(\g)$ and $\B(\U(\g))$, the first of which is cocommutative while the second is not. Then we have the following result.
\begin{prop}
	The dg coalgebras $\CE_\bullet(\g)$ and $\B(\U(\g))$ are weakly equivalent.
\end{prop}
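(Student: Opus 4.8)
The plan is to exhibit a zigzag of weak equivalences of dg coalgebras connecting $\CE_\bullet(\g)$ and $\B(\U(\g))$ by comparing their Koszul adjoints, using the two Quillen equivalences already recorded. The key observation is that the cobar construction $\Omega\CE_\bullet(\g)$ and the cobar construction $\Omega\B(\U(\g))$ should both be quasi-isomorphic to $\U(\g)$ as augmented dg algebras. For the second, the counit of the $(\Omega,\B)$ adjunction gives a natural map $\Omega\B(\U(\g))\to\U(\g)$, which is a weak equivalence in $\DGA_*$ because $(\Omega,\B)$ is a Quillen equivalence and $\B(\U(\g))$ is cofibrant (all objects of $\DGC_*$ are). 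For the first, recall that $\CE_\bullet(\g)$ is a cocommutative conilpotent dg coalgebra, so $\Omega\CE_\bullet(\g)$ is a dg Hopf algebra, and the classical Poincaré--Birkhoff--Witt-type comparison identifies $\U(\g)$ with the universal enveloping algebra of the Harrison complex $\Harr^\bullet\CE_\bullet(\g)$; since the counit $\Harr^\bullet\CE_\bullet(\g)\to\g$ of the $(\Harr^\bullet,\CE_\bullet)$ adjunction is a weak equivalence in $\DGL$ (again because that adjunction is a Quillen equivalence and $\CE_\bullet(\g)$ is cofibrant), applying the exact functor $\U(-)$ and using $\Omega\CE_\bullet(\g)\simeq\U(\Harr^\bullet\CE_\bullet(\g))$ yields a quasi-isomorphism $\Omega\CE_\bullet(\g)\xrightarrow{\ \sim\ }\U(\g)$ of augmented dg algebras.

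Concretely, the first step is to establish the identity $\Omega\CE_\bullet(\g)\cong\U(\Harr^\bullet(\CE_\bullet(\g)))$, or at least a natural quasi-isomorphism between these: the cobar construction of a cocommutative coalgebra $D$ is the tensor (free associative) algebra on $\bar D[-1]$, which is the universal enveloping algebra of the free Lie algebra $\mathbb{L}(\bar D[-1])=\Harr^\bullet(D)$ as graded objects, and one checks the differentials match (the quadratic part of the cobar differential restricts to the Harrison differential on primitives, and the enveloping-algebra differential is the unique extension as a derivation). This is a standard but slightly fiddly compatibility check. The second step is to invoke, for both $C=\CE_\bullet(\g)$ and $C=\B(\U(\g))$, that the unit map $C\to\B(\Omega C)$ is a weak equivalence in $\DGC_*$ (Quillen equivalence, every coalgebra cofibrant), so that we obtain
\begin{equation*}
\CE_\bullet(\g)\ \xrightarrow{\ \sim\ }\ \B(\Omega\CE_\bullet(\g))\ \xrightarrow{\ \sim\ }\ \B(\U(\g))\ \xleftarrow{\ \sim\ }\ \B(\Omega\B(\U(\g)))\ \xrightarrow{\ =\ }\ \B(\U(\g)),
\end{equation*}
where the middle arrow is $\B$ applied to the quasi-isomorphism $\Omega\CE_\bullet(\g)\xrightarrow{\sim}\U(\g)$. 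Since a quasi-isomorphism of augmented dg algebras induces a weak equivalence of bar constructions in $\DGC_*$ (this is part of the Quillen equivalence package, as $\B$ sends weak equivalences between objects — all fibrant/cofibrant as needed — to weak equivalences), every arrow in the zigzag is a weak equivalence, establishing the claim. Alternatively, and more cleanly, one simply notes that $\Omega$ applied to the putative weak equivalence $\CE_\bullet(\g)\to\B(\U(\g))$ — built as the composite $\CE_\bullet(\g)\to\B(\Omega\CE_\bullet(\g))\to\B(\U(\g))$ using the unit and $\B$ of the comparison map — is a quasi-isomorphism, which is exactly the definition of a weak equivalence in $\DGC_*$.

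The main obstacle I expect is the bookkeeping in Step 1: verifying carefully that $\Omega\CE_\bullet(\g)$, with its full cobar differential $d_1+d_2$, is quasi-isomorphic (compatibly with the Hopf/augmentation structure) to $\U(\g)$ rather than just abstractly having the right cohomology. The cleanest route is probably not to prove $\Omega\CE_\bullet(\g)\cong\U(\Harr^\bullet\CE_\bullet(\g))$ on the nose, but to use that $\Harr^\bullet$ is left adjoint to $\CE_\bullet$ together with the fact that $\Omega(D)$ for cocommutative $D$ is the enveloping algebra of $\Harr^\bullet(D)$ (this is essentially the statement in the excerpt that $\Harr^\bullet(D)$ is the Lie algebra of primitives of the Hopf algebra $\Omega(D)$, plus the Milnor--Moore theorem in characteristic zero giving $\Omega(D)\cong\U(\Harr^\bullet(D))$), and then transport the weak equivalence $\Harr^\bullet\CE_\bullet(\g)\xrightarrow{\sim}\g$ through the exact functor $\U$. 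Everything else is a formal consequence of the two Quillen equivalences stated above.
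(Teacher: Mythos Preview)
Your argument is correct, but it takes a different route from the paper's. The paper constructs a \emph{direct} map of dg coalgebras $f:\CE_\bullet(\g)\to\B(\U(\g))$ coming from the canonical MC element (twisting cochain) in $\Hom(\CE_\bullet(\g),\U(\g))$, equips both sides with explicit admissible filtrations (the symmetric-power filtration on $\CE_\bullet(\g)$ and the PBW filtration on $\B(\U(\g))$), and checks that on the associated graded the map becomes the comparison of the Koszul and bar resolutions of $\bk$ over $\huaS(\g)$, hence a quasi-isomorphism. Filtered quasi-isomorphisms for admissible filtrations are weak equivalences in $\DGC_*$ (Positselski), so one is done without ever invoking the Harrison adjunction or Milnor--Moore.

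Your approach instead bootstraps from the two Quillen equivalences already quoted: you identify $\Omega(D)\cong\U(\Harr^\bullet(D))$ for cocommutative $D$ (via $T(V)\cong\U(\mathbb L(V))$ and the fact that the cobar differential restricts to the Harrison differential on primitives), use that the counit $\Harr^\bullet\CE_\bullet(\g)\to\g$ is a quasi-isomorphism and that $\U$ preserves quasi-isomorphisms (PBW over a field of characteristic zero), and then transport through $\B$. This is perfectly valid; the trade-off is that your argument is cleaner conceptually but leans on more machinery (Milnor--Moore, two separate Quillen equivalences), whereas the paper's proof is more self-contained and yields an explicit comparison map together with a filtration that controls it. One minor point: the tail of your displayed zigzag, $\B(\U(\g))\xleftarrow{\sim}\B(\Omega\B(\U(\g)))\xrightarrow{=}\B(\U(\g))$, is redundant; the composite $\CE_\bullet(\g)\to\B(\Omega\CE_\bullet(\g))\to\B(\U(\g))$ already does the job, as your ``alternative'' paragraph correctly observes.
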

\begin{proof}
	First recall that in order to prove that two conilpotent dg coalgebras are weakly equivalent, it suffices to construct a filtered quasi-isomorphism between them corresponding to some admissible filtrations \cite[Section 6.10]{Pos}. We will now construct such a filtered quasi-isomorphism.
	
	Note that there is a map of dg coalgebras $f:\CE_\bullet(\g)\to\B(\U(\g))$ corresponding to the canonical MC element in the convolution dg algebra $\Hom(\CE_\bullet(\g),\U(\g))$. Now introduce an increasing exhaustive filtration  on $\CE_\bullet(\g)$ by stipulating that any monomial $g_1...g_i...g_k,k\le n$ with $g_i\in\g$ belongs to the filtration component $F_n$. Similarly introduce a filtration on $\B(\U(\g))$ as induced from the PBW filtration on $\U(\g)$. It is clear that the map $f$ is compatible with the filtration just described and thus, induces
	a map on the associated graded modules \begin{equation}\label{assgr}\operatorname{gr}\CE_\bullet(\g)\to\operatorname{gr}\B(\U(\g)).
	\end{equation}
	Note that $\operatorname{gr}\B(\U(\g))$ is a complex computing $\operatorname{Ext}_{\huaS(\g)}(\bk,\bk)$ via the bar-resolution of $\bk$ over the symmetric
	 algebra $\huaS(\g)$ whereas $\operatorname{gr}\CE_\bullet(\g)$ (with the zero differential) is the Koszul complex computing the same derived functor. Thus, the (\ref{assgr}) is a quasi-isomorphism as required.
\end{proof}
Recall that associated to any dg algebra is the category $A\MMod$ of (left) dg $A$-modules which is a model category with surjective maps for fibrations and quasi-isomorphisms as weak equivalences; the corresponding homotopy category is the derived category $D(A)$ of the dg algebra $A$ (this is a standard way of constructing $D(A)$). This is the \emph{projective} model structure, all objects in it are fibrant.

Similarly,  associated to any dg coalgebra $C$ is the category $C\CComod$ of (left) dg $C$-comodules
 which is a model category in its own right, with injective maps as cofibrations and coacyclic maps as weak equivalences, \cite[Section 8.2]{Pos}; the corresponding homotopy category is the  coderived category $D^{\co}(C)$ of the dg coalgebra $C$. Two weakly equivalent dg coalgebras give rise to Quillen equivalent categories of dg comodules. Moreover, a module-comodule version of Koszul duality gives a Quillen equivalence between $A\MMod$ and $\B(A)\CComod$, cf, \cite[Section 8.4]{Pos} for any dg algebra $A$. Therefore, taking $A=\U(\g)$, the universal enveloping algebra of a dg Lie algebra $\g$, we obtain that the category of dg $\g$-modules (which is isomorphic to $\U(\g)\MMod$) is Quillen equivalent to $\B(\U(\g))\CComod$ and also to $\CE_\bullet(\g)\CComod$.
 \subsection{Derived duality functors for modules and comodules} Given a dg algebra $A$, there is a derived duality functor on $D(A): M\mapsto d(M)$ where
 $d(M):=\RHom_A(M,A).$

Abusing the notation slightly, we will write $d(M)$ for an object in $D(A^{\op})$ as well as for a dg $A$-module representing it. Note that $d(M)$ is naturally a right dg $A$-module (owing to the fact that $A$ is a two-sided module over itself). Formally, $d$ is a contravariant functor $D(A)\to D(A^{\op})$.

An analogue of this functor for a dg coalgebra $C$ is not so straightforward. Given a dg $C$-comodule $N$, set \[d(N):=\holim_\alpha N^*_{\alpha}\]
where the homotopy limit in the model category of dg $C$-comodules is taken over finite-dimensional dg subcomodules of $N$. The functor $N\mapsto d(N)$ is a contravariant functor $D^{\co}(C)\to D^{\co}(C^{\op})$. Then the following result holds.
\begin{prop}\label{prop:one-sideddual}
For any augmented dg algebra $A$,	there is a commutative diagram of functors
\[
\xymatrix{D(A)\ar_\cong[d]\ar^d[r]&D(A^{\op})^{\ar^\cong[d]}\\
D^{\co}(\B(A))\ar^d[r]&D^{\co}\big((\B (A))^{\op}\big)	
}
\]
where the vertical arrows are Koszul duality equivalences.
\end{prop}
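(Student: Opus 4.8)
The plan is to verify that the two duality functors are intertwined by the Koszul duality equivalences, and the natural strategy is to reduce everything to the compact generators on both sides. On the algebra side, $D(A)$ is compactly generated by $A$ itself (as a free module of rank one), and the duality functor $d = \RHom_A(-,A)$ sends $A \mapsto A$, viewed now as an object of $D(A^{\op})$. On the coalgebra side, $D^{\co}(\B(A))$ is cogenerated by $\bk$ (the trivial one-dimensional comodule), and under Koszul duality the free module $A$ corresponds precisely to $\bk$. So the first step is to check that $d(\bk) \simeq \bk$ in $D^{\co}((\B(A))^{\op})$: by definition $d(\bk) = \holim_\alpha \bk^*_\alpha$, but $\bk$ is already finite-dimensional, so this homotopy limit is just $\bk^* = \bk$ with its induced $(\B(A))^{\op}$-comodule structure, which is again the trivial comodule. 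Thus the square commutes on the distinguished generator.

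Next I would promote this to a statement about all of $D(A)$. Both $d$ functors are contravariant and I claim they are continuous in the appropriate sense: on the algebra side $\RHom_A(-,A)$ sends homotopy colimits to homotopy limits (it is an $\RHom$ out of the first variable), and on the coalgebra side the functor $N \mapsto \holim_\alpha N^*_\alpha$ is built out of a limit of linear duals and likewise converts colimits to limits. The Koszul duality equivalences are themselves exact equivalences of triangulated (indeed, of stable $\infty$-) categories. So the composites around the two ways of traversing the square are both exact contravariant functors $D(A) \to D^{\co}((\B(A))^{\op})$ that commute with homotopy colimits and agree on the compact generator $A$. A standard argument — either by generation under shifts, cones, and coproducts, or by a $\mathrm{Hom}$-pairing argument using that $d$ on the algebra side is determined by the pairing $\RHom_A(M,A) \otimes_A^{\huaL} (-)$ — then forces the two functors to be naturally isomorphic on all objects, not merely on $A$.

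To make the last step precise I would actually exhibit the natural transformation rather than merely prove abstract agreement on generators, since a bare density argument only gives an isomorphism after passing to, say, the full subcategory generated by $A$, and one wants it on the nose on $D^{\co}(\B(A)) \simeq D(A)$. Concretely: the Koszul duality equivalence sends a dg $A$-module $M$ to the $\B(A)$-comodule $M \otimes^\tau \B(A)$ (the twisted tensor product / bar construction with coefficients), and one can write down a chain-level comparison map between $\big(M \otimes^\tau \B(A)\big)$ dualized-in-the-comodule-sense and $\RHom_A(M,A) \otimes^\tau \B(A)$, using the explicit twisting cochain $\B(A) \to A$. Checking that this map is a weak equivalence of comodules reduces, via the filtration/associated-graded trick already used in the excerpt for comparing $\CE_\bullet(\g)$ with $\B(\U(\g))$, to a statement with zero differential where it becomes the elementary fact that linear duality commutes with the relevant tensor products in the finite-dimensional (or pseudocompact) setting.

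The main obstacle I anticipate is book-keeping around the homotopy limit $\holim_\alpha N^*_\alpha$ defining $d$ on the comodule side: one must be careful that this homotopy limit, taken in the model category of dg $\B(A)$-comodules (where weak equivalences are coacyclicities, not quasi-isomorphisms), is computed correctly and is compatible with the Koszul duality model structures — in particular that for $N$ corresponding to a perfect $A$-module the system of finite-dimensional subcomodules is cofinal enough that the $\holim$ agrees with the expected answer and does not introduce spurious completion phenomena. Verifying the continuity of this functor and its interaction with the cofibrant/fibrant replacements implicit in $D^{\co}$ is the delicate point; everything else is either formal (exactness, generation) or a routine associated-graded computation of the kind already carried out in Section \ref{sec:recollection}.
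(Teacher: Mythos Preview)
The paper does not give its own proof: it simply cites \cite[Proposition 3.11]{BCL}. So there is no in-house argument to compare your proposal against, only an external reference.

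That said, a few remarks on your sketch. Your first strategy --- check the square on the compact generator $A\leftrightarrow\bk$ and extend by (co)continuity --- is the right instinct, but as you already flag, the step ``$d$ on the comodule side sends homotopy colimits to homotopy limits'' is not formal: the definition $d(N)=\holim_\alpha N_\alpha^*$ is a homotopy limit over the poset of finite-dimensional subcomodules of $N$, and showing this behaves well under colimits of $N$ (in the coderived sense, not the quasi-isomorphism sense) is precisely the content to be established, not a hypothesis one can invoke. Moreover, agreement on a generator does not by itself produce a natural transformation; one needs the transformation first and then the generation argument shows it is an isomorphism objectwise. You recognize this and pivot to the second strategy.

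Your second strategy --- write down an explicit chain-level comparison map using the twisting cochain $\tau:\B(A)\to A$ between $\RHom_A(M,A)\otimes^\tau\B(A)$ and the comodule-side dual of $M\otimes^\tau\B(A)$, then check it is a filtered quasi-isomorphism via an associated-graded argument --- is the more robust route and is essentially how such statements are proved in the literature (including, as far as one can tell from the citation, in \cite{BCL}). The filtration you want is the one induced by the bar-length on $\B(A)$; on associated gradeds the twisting differential disappears and the comparison becomes the elementary compatibility of linear duality with tensor products in the pseudocompact setting. This is a correct outline; the only real work is getting the signs and the identification of the comodule-side $d$ with the pseudocompact dual right, which is bookkeeping rather than a conceptual obstacle.
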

\begin{proof} This is \cite[Proposition 3.11]{BCL}.\end{proof}
In other words, the derived duality functors $d$ for modules and comodules correspond to each other under Koszul duality (or, more accurately, Koszul equivalence).

Following \cite{BCL}, we will call that an augmented dg algebra $A$ is $n$-Gorenstein, if there is a quasi-isomorphism $d(\bk)[n]\simeq \bk$, i.e. if the $A$-module $\bk$ is self-dual up to a shift $n$. According to Proposition \ref{prop:one-sideddual}, this is equivalent to requiring an existence of a weak equivalence of $\B(A)$-comodules $d(\B(A))[n]$ and $\B(A)$; in other words (in the language of \cite[Theorem 8.3]{BCL}), $\B(A)$ is an $n$-Frobenius coalgebra.
\begin{defi}\label{def:dualizing}
	The \emph{dualizing complex} for an augmented dg algebra $A$ is the right dg $A$-module $\bd:=d(\bk)$.
	
If	$A$ is a Gorenstein dg algebra, its dualizing complex (whose cohomology is concentrated in a single degree and is one-dimensional)  will be called the \emph{dualizing module}.
\end{defi}
\begin{rem}
	If $A$ is a dg Hopf algebra, which is the case treated in detail in the next section, then the dualizing complex for $A$ is also called the \emph{homological left integral} for $A$. A detailed study of homological left integrals for ungraded Hopf algebras was undertaken in \cite{BZ}.
\end{rem}

\section{Duality for dg Hopf algebras}\label{section:Hopf}
Now let $A$ be a dg Hopf algebra with comultiplication $\Delta:A\to A\otimes A$ and the antipode $S$; not necessarily commutative or cocommutative.
Recall that  given two dg $A$-modules $M$ and $N$, their tensor product $M\otimes N$ is a dg $A$-module where the $A$-module action is given by the formula
\[
a(m\otimes n)=\Delta(a)(m\otimes n)
\]
for $a\in A, m\in M, n\in N$. Given a right $A$-module $M$, it can be viewed as a left module according to the formula $am:=(-1)^{|a||m|}mS(a)$ where $a$ and $m$ are homogeneous elements in $A$ and $M$ respectively. Given two dg $A$-modules $M$ and $N$, their $\bk$-linear hom space $\Hom(M,N)$ is naturally an $A\otimes A^{\op}$-module and it therefore has the structure of an $A$-module using the dg algebra map
\[
(1\otimes S)\circ\Delta:A\to A\otimes A^{\op}.
\]
Next, given two $A$-modules $M$ and $N$, there is an $A$-module map
\begin{equation}\label{eq:dualityiso}
\phi:N\otimes M^*\to \Hom(M,N)
\end{equation}
so that for $n\in N, m\in M, \alpha\in M^*$ we have  $\phi(n\otimes \alpha)(m)=n\alpha(m)$. If $\dim M<\infty$, then $\phi$ is an isomorphism. This is proved in \cite{Wi} for ungraded Hopf algebras but the proof carries over verbatim to the dg case.

Recall from  \cite[Section 8]{Pos} that for a dg algebra $A$ the category of
dg $A$-modules has an \emph{injective} model structure where weak equivalences are, as before, quasi-isomorphisms but cofibtations are injections. In this model structure all objects are cofibrant and the identity functor is a Quillen equivalence between the projective and injective model structures on $A\MMod$. We will refer to an injectively fibrant dg module as simply fibrant and to a projectively cofibrant one as simply cofibrant.
\begin{lem}\label{lem:inj}
	Let $A$ be a dg Hopf algebra, $M$ be a cofibrant dg $A$-module and $N$ be a fibrant dg $A$-module. Then $\Hom(M,N)$ is a fibrant dg $A\otimes A^{\op}$-module where the left $A$-action is induced from the left $A$-action on $N$ and the right $A$-action which is from the left $A$-action on $M$.
\end{lem}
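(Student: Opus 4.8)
The plan is to reduce the fibrancy statement to a lifting property and then exploit the interplay between the injective model structure on $A\MMod$ and the tensor--Hom adjunction for the Hopf algebra $A$. Recall that a dg module is injectively fibrant precisely when it has the right lifting property against all trivial cofibrations, i.e. against all injective quasi-isomorphisms. So it suffices to show: for every trivial cofibration $i\colon P\hookrightarrow Q$ of dg $A\otimes A^{\op}$-modules and every map $P\to\Hom(M,N)$, a lift $Q\to\Hom(M,N)$ exists. The key point is that an $A\otimes A^{\op}$-module structure on a $\bk$-space is the same as an $A$-bimodule structure, and the $A\otimes A^{\op}$-action on $\Hom(M,N)$ described in the statement is, by construction, obtained by restriction along the dg algebra map $(1\otimes S)\circ\Delta\colon A\to A\otimes A^{\op}$ applied slot-wise; more precisely $\Hom(M,N)$ is the internal hom in $A\MMod$ with its residual bimodule structure coming from the left actions on $M$ and $N$.

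First I would identify the adjunction that governs the situation. For a dg Hopf algebra $A$, the functor $M\otimes(-)\colon A\MMod\to A\MMod$ (using the comultiplication to define the $A$-action on the tensor product, as recalled just before the lemma) is left adjoint to $\Hom(M,-)$, where $\Hom$ carries the $A$-action built from the left actions on source and target via $(1\otimes S)\circ\Delta$; this is exactly the dg analogue of the classical Hopf-algebraic tensor--hom adjunction, and the required natural isomorphism $\Hom_A(L\otimes M, N)\cong\Hom_A(L,\Hom(M,N))$ follows by the same computation as in \cite{Wi}, using the antipode axioms. Tracking the bimodule refinement, the right $A$-action on $\Hom(M,N)$ corresponds under this adjunction to the functor $(-)\otimes M$ acting on the \emph{first} variable. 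Thus the statement that $\Hom(M,N)$ is fibrant as an $A\otimes A^{\op}$-module is equivalent to the assertion that $\Hom_{A}(-\otimes M, N)$ sends trivial cofibrations of $A$-bimodules (equivalently, right $A$-modules after adjusting the left action) to surjections — which in turn will follow once we know that $-\otimes M$ preserves trivial cofibrations and that $N$ is injectively fibrant.

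The remaining input is therefore: (i) $M\otimes(-)$, and its bimodule variant $(-)\otimes M$, is a left Quillen functor for the injective model structures, i.e. it preserves cofibrations (injections — clear, since $-\otimes M$ is exact as $\bk$ is a field and $M$ is flat over $\bk$) and trivial cofibrations (injective quasi-isomorphisms — again clear by exactness over $\bk$, since tensoring a quasi-isomorphism with the $\bk$-flat object $M$ yields a quasi-isomorphism); and (ii) $N$ is injectively fibrant by hypothesis. Given (i) and (ii), the adjunction shows $\Hom_{A\otimes A^{\op}}(i,\Hom(M,N))$ is surjective for every trivial cofibration $i$, so $\Hom(M,N)$ is injectively fibrant over $A\otimes A^{\op}$, as claimed. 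The only genuinely delicate point — the main obstacle — is bookkeeping: one must check carefully that the $A\otimes A^{\op}$-action written in the statement really is the one for which $(-)\otimes M\dashv \Hom(M,-)$ is an honest adjunction of \emph{bimodule}-valued functors, i.e. that the left and right $A$-actions on $\Hom(M,N)$ commute and match the two slots of the adjunction. This is where the Hopf axioms (coassociativity, counit, and the antipode being an anti-coalgebra/anti-algebra map) are used, and it is essentially the content of the cited result \cite{Wi}, extended to the dg setting with Koszul signs; I would spell out the sign conventions once and then invoke that computation. The cofibrancy of $M$ is not actually needed for fibrancy of $\Hom(M,N)$ — every object is cofibrant in the projective structure on $A\MMod$ — but it is harmless to keep it in the statement as it will be used in the applications; alternatively one uses that $-\otimes M$ preserves all injective quasi-isomorphisms regardless of $M$, so flatness over the field $\bk$ is all that is required.
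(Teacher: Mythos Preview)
There is a genuine gap. The $A\otimes A^{\op}$-module structure on $\Hom(M,N)$ in the statement is the \emph{ordinary} bimodule structure, $(a\cdot f\cdot b)(m)=\pm\, a\,f(bm)$; it is not obtained by restriction along $(1\otimes S)\circ\Delta$. That map goes the other way: it produces a single left $A$-action \emph{from} the bimodule structure, not a bimodule structure from a left action. Consequently the adjunction governing the lifting problem is not the Hopf tensor--hom adjunction over $\bk$ but the plain bimodule adjunction
\[
\Hom_{A\otimes A^{\op}}\big(P,\Hom(M,N)\big)\;\cong\;\Hom_{A}\big(P\otimes_{A}M,\,N\big),
\]
with the tensor taken over $A$, pairing the right action on $P$ against the left action on $M$. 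This is exactly what the paper does: an acyclic injection $X\hookrightarrow Y$ of $A\otimes A^{\op}$-modules is sent to $X\otimes_A M\to Y\otimes_A M$, and one checks that the latter is an acyclic injection of left $A$-modules so that injective fibrancy of $N$ furnishes the lift.

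This is precisely where cofibrancy of $M$ is used. The functor $-\otimes_A M$ need not preserve quasi-isomorphisms or injections; it does when $M$ is projectively cofibrant, since then $M$ is graded-projective and homotopically flat over $A$. Your assertion that ``flatness over the field $\bk$ is all that is required'' and that cofibrancy of $M$ is dispensable comes from conflating $\otimes_A$ with $\otimes_{\bk}$; once the bimodule structure is tracked correctly the tensor is over $A$ and the hypothesis on $M$ does real work. (Incidentally, the Hopf structure on $A$ plays no role in this lemma: the paper's argument uses only the standard bimodule adjunction and is valid for any dg algebra.)
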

\begin{proof}
Given an acyclic injection $X\to Y$ of $A\otimes A^{\op}$-modules, and a map $X\to \Hom(M,N)$ we need to show that the following diagram admits a lift:
\begin{equation}\label{eq:lift}
\xymatrix{
X\ar[d]\ar[r]&\Hom(M,N)\\Y\ar@{-->}[ur]
}
\end{equation}
	
This is equivalent to a lift in the following diagram of $A$-modules where the solid arrows are the adjoint to the solid arrows in (\ref{eq:lift}):
\[
\xymatrix{
	X\otimes_A M\ar[d]\ar[r]&N\\Y\otimes_A M\ar@{-->}[ur]
}
\]	
But since $M$ is cofibrant, the map $X\otimes_AM\to Y\otimes_AM$ is acyclic. It is also injective (this is obvious if $M$ is free as a graded $A$-module and follows in general). Since $N$ is a fibrant $A$-module, a desired lift exists.
\end{proof}
\begin{lem}\label{lem:hopf}
	Let $A$ be a dg Hopf algebra and $M,N$ be dg $A$-modules. Then there is a natural isomorphism in $D(\mathbf k)$:
	\begin{equation}\label{eq:RHom}{\RHom}_A(M,N)\to {\RHom}_A(\bk,\Hom(M,N)).\end{equation} If $N$ is an $A$-bimodule then ${\Hom}_A(M,N)$ and ${\Hom}_A(\bk,\Hom(M,N))$ are right $A$-modules and  (\ref{eq:RHom}) in an isomorphism in $D(A^{\op})$.
\end{lem}
\begin{proof}
	
	Without loss of generality, we assume that $M$ is cofibrant and $N$ is  fibrant dg $A$-modules. Then $\RHom_A(M,N)$ is represented by $\Hom_A(M,N)$.
	According to Lemma \ref{lem:inj}, $\Hom(M,N)$ is fibrant over $A\otimes A^{\op}$. Regarded as a dg $A$-module via the map $(1\otimes S)\circ \Delta:A\to A\otimes A $, it is also fibrant since the restriction functor is right Quillen and so preserves fibrant objects.
	Therefore, ${\RHom}_A(\bk,\Hom(M,N))$ is represented by $${\Hom}_A(\bk,\Hom(M,N))\cong \Hom(M,N)^A.$$

Next, dg vector spaces $\Hom_A(M,N)$ and $\Hom(M,N)^A$ are isomorphic, cf. \cite[Lemma 9.2.2]{Wi} where this statement is proved for ordinary Hopf algebras but the argument carries over to the dg graded case without any changes.

Finally, if $N$ has an additional structure of a right $A$-module, commuting with its left $A$-module structure, then all the isomorphisms considered above are clearly those of right $A$-modules.

\end{proof}
We need the following important result on modules over dg Hopf algebras.
\begin{lem}\label{lem:action}
	Let $N$ be a dg module over a dg Hopf algebra $A$ with the left action $m:A\otimes N\to N$ and denote by $N_{\operatorname{tr}}$ be the dg $A$-module whose underlying dg vector space is $N$ but the action is trivial, i.e. through the counit map $A\to\bk$.Then the map
	\[
	(1\otimes m)(1\otimes S\otimes 1)(\Delta\otimes 1):A\otimes N\to A\otimes N_{\operatorname{tr}}
	\]
	is an isomorphism of left dg $A$-modules. The inverse map is given by
	\[
	(1\otimes m)(\Delta\otimes 1):A\otimes N_{\operatorname{tr}}\to A\otimes N.
	\]
	The same pair of mutually inverse maps determines an isomorphism between $A\otimes N$ and $A\otimes N_{\operatorname{tr}}$ viewed as \emph{right} dg $A$-modules.
\end{lem}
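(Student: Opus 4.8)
The plan is to verify directly that the two stated maps are mutually inverse morphisms of left (and then right) dg $A$-modules, using only the Hopf algebra axioms. Write $\mu$ for the multiplication of $A$, $\Delta$ for the comultiplication, $S$ for the antipode, and $\epsilon$ for the counit; recall the defining relations $\mu(1\otimes S)\Delta = \mu(S\otimes 1)\Delta = \eta\epsilon$ and the coassociativity and counit axioms. Denote the two maps by
\[
\Phi := (1\otimes m)(1\otimes S\otimes 1)(\Delta\otimes 1)\colon A\otimes N\to A\otimes N_{\operatorname{tr}},\qquad
\Psi := (1\otimes m)(\Delta\otimes 1)\colon A\otimes N_{\operatorname{tr}}\to A\otimes N.
\]
First I would check that $\Phi$ is a map of left dg $A$-modules. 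The left action on $A\otimes N$ (and on $A\otimes N_{\operatorname{tr}}$) is $\mu$ on the first factor only, so this amounts to the identity $(\mu\otimes 1)(1\otimes\Phi) = \Phi(\mu\otimes 1)$ on $A\otimes A\otimes N$, which follows from coassociativity of $\Delta$ together with associativity of $\mu$ — a diagram chase with the graphical calculus makes it transparent. The same bookkeeping shows $\Psi$ is a left $A$-module map. Compatibility with the differentials is automatic since $\Delta$, $S$ and $m$ are all chain maps and the twisted differential on a tensor product of dg $A$-modules is built from these.

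Next I would compute the two composites. For $\Psi\circ\Phi\colon A\otimes N\to A\otimes N$, tracking an element through the maps and using coassociativity to collect the comultiplications on the first factor, one obtains an expression of the form $(1\otimes m)(1\otimes m)(1\otimes S\otimes 1)(\Delta\otimes 1)(\Delta\otimes 1)$, which after reassociating the two $m$'s into $m\circ(\mu\otimes 1)$ and applying the antipode axiom $\mu(S\otimes 1)\Delta$ on the middle two slots collapses — via $\eta\epsilon$ and the counit axiom — to the identity on $A\otimes N$. The composite $\Phi\circ\Psi$ is handled symmetrically, using $\mu(1\otimes S)\Delta = \eta\epsilon$ and the other counit axiom. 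Hence $\Phi$ and $\Psi$ are inverse isomorphisms of left dg $A$-modules.

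Finally, for the right-module statement, recall that on $A\otimes N$ the right $A$-action is $\mu$ on the first factor (via right multiplication) while the left $A$-action on $N$ plays no role, and on $A\otimes N_{\operatorname{tr}}$ likewise; since $\Phi$ only ever modifies the second factor through maps that commute with right multiplication on the first factor, the verification that $\Phi$ and $\Psi$ respect the right action is the mirror image of the left-module computation and uses the same Hopf identities. The main obstacle is purely organizational: keeping the Koszul signs straight when permuting homogeneous elements past $S$ and $m$, and being careful that "$N_{\operatorname{tr}}$" means the second tensor factor carries the trivial action so that the left $A$-action on $A\otimes N_{\operatorname{tr}}$ genuinely is $\mu\otimes 1$. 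Once the graphical calculus is set up with signs, all four checks are short diagram chases.
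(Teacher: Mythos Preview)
Your verification that $\Phi$ and $\Psi$ are mutually inverse maps of dg vector spaces via the antipode and counit axioms is fine, and this is essentially what the paper invokes (it simply cites \cite[Lemma~9.2.9]{Wi} and observes that the ungraded argument carries over). However, there is a genuine gap in your treatment of the $A$-linearity.

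You write that ``the left action on $A\otimes N$ (and on $A\otimes N_{\operatorname{tr}}$) is $\mu$ on the first factor only,'' and you reduce the module-map check to the identity $(\mu\otimes 1)(1\otimes\Phi)=\Phi(\mu\otimes 1)$. But in this paper the tensor product of $A$-modules is always taken with the \emph{diagonal} action $a\cdot(b\otimes n)=\sum a_{(1)}b\otimes a_{(2)}\cdot n$; only on $A\otimes N_{\operatorname{tr}}$ does this collapse to $\mu\otimes 1$. The whole content of the lemma is precisely that $\Phi$ intertwines the diagonal action on the source with the free action on the target. Your claimed identity is in fact false: for instance $\Phi(ab\otimes n)=\sum a_{(1)}b_{(1)}\otimes S(b_{(2)})S(a_{(2)})n$, whereas $(\mu\otimes 1)(a\otimes\Phi(b\otimes n))=\sum ab_{(1)}\otimes S(b_{(2)})n$. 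The correct computation is
\[
\Phi\bigl(c\cdot(a\otimes n)\bigr)=\sum c_{(1)}a_{(1)}\otimes S(a_{(2)})S(c_{(2)})c_{(3)}n=\sum c a_{(1)}\otimes S(a_{(2)})n = c\cdot\Phi(a\otimes n),
\]
which uses the antipode axiom $\mu(S\otimes 1)\Delta=\eta\epsilon$ in an essential way, not merely coassociativity. The same misidentification affects your right-module paragraph: under your reading $\Phi$ would have to commute with right multiplication on the first factor on both sides, and the same example shows it does not. What actually happens (and what is used in the proof of Proposition~\ref{prop:duality}) is that $\Phi$ carries right multiplication on the first factor of $A\otimes N$ to the diagonal right action on $A\otimes N_{\operatorname{tr}}$.
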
	
\begin{proof}
	This is proved in \cite[Lemma 9.2.9]{Wi} in the ungraded case but the proof generalizes to the dg case in an obvious fashion.
\end{proof}
We will now describe duality for proper dg modules over a dg Hopf algebra; recall that $\bd$ stands for the dualizing complex, cf. Definition \ref{def:dualizing}.
\begin{prop}\label{prop:duality}
Let $M$ be a proper dg module over a dg Hopf algebra $A$. Then there is an isomorphism in $D(A^{\op})$:
\[
d(M)\cong \Hom(M,\bd).
\]
\end{prop}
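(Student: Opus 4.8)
The plan is to reduce the assertion $d(M) \cong \Hom(M,\bd)$ in $D(A^{\op})$ to the pair of lemmas just established, namely Lemma \ref{lem:hopf} and Lemma \ref{lem:action}, applied to the bimodule $N = A$. First I would recall the definitions: $d(M) = \RHom_A(M,A)$ and $\bd = d(\bk) = \RHom_A(\bk, A)$, both viewed as objects of $D(A^{\op})$ via the right $A$-module structure on $A$ as a bimodule over itself. The key input is Lemma \ref{lem:hopf} with $N = A$: it gives a natural isomorphism in $D(A^{\op})$
\[
\RHom_A(M,A) \;\cong\; \RHom_A(\bk, \Hom(M,A)),
\]
so it suffices to identify $\RHom_A(\bk,\Hom(M,A))$ with $\Hom(M,\bd)$.

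Next I would analyze the $A$-module $\Hom(M,A)$. Since $M$ is proper, one can replace it (up to weak equivalence) by a cofibrant model that is, as a graded $A$-module, finitely generated free — say $M \simeq A \otimes V$ for a finite-dimensional graded vector space $V$ (ignoring differentials, which is all that matters for the hom-space computation after passing to a suitable model). Then the natural map $\phi$ of \eqref{eq:dualityiso} is an isomorphism, so $\Hom(M,A) \cong A \otimes M^*$ as an $A$-module, where the $A$-action on the left factor uses $(1\otimes S)\circ\Delta$ and $M^*$ carries the contragredient action. Here Lemma \ref{lem:action} enters: it says that for any $A$-module $P$, the module $A \otimes P$ (with the "diagonal" action coming from $\Delta$ and the action on $P$) is isomorphic as a left dg $A$-module to $A \otimes P_{\operatorname{tr}}$, with $P_{\operatorname{tr}}$ carrying the trivial action; moreover this is an isomorphism of right $A$-modules as well. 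Applying this with $P = M^*$, we get $\Hom(M,A) \cong A \otimes (M^*)_{\operatorname{tr}}$ compatibly with the right $A$-action inherited from $A$ being a bimodule. Consequently
\[
\RHom_A(\bk,\Hom(M,A)) \;\cong\; \RHom_A(\bk, A\otimes (M^*)_{\operatorname{tr}}) \;\cong\; \RHom_A(\bk,A)\otimes M^* \;\cong\; \bd \otimes M^*,
\]
where the middle step uses that $M^*$ is finite-dimensional with trivial action, so it pulls out of the $\RHom$, and the right $A$-module structure is carried along. Finally, $\bd \otimes M^* \cong \Hom(M,\bd)$ again by \eqref{eq:dualityiso} (with $N = \bd$), and all of these identifications respect the right $A$-module structures, giving the claimed isomorphism in $D(A^{\op})$.

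The main obstacle I expect is bookkeeping the $A$-module structures carefully, particularly keeping track of which copy of $A$ carries which action and verifying that the isomorphisms of Lemmas \ref{lem:hopf} and \ref{lem:action} are genuinely compatible with the \emph{right} $A$-module structure that makes everything live in $D(A^{\op})$ — the antipode $S$ intervenes in converting right actions to left actions and signs must be tracked. A secondary technical point is justifying that $M$ may be replaced by a graded-free finitely generated model without disturbing the derived hom-spaces; this uses properness of $M$ together with the fact that $\RHom_A(-,A)$ and $\RHom_A(\bk,-)$ send weak equivalences to isomorphisms in the relevant derived categories, so one can work with any convenient cofibrant (or fibrant, as needed) representative. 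Once these compatibilities are in place, the proof is a concatenation of the two lemmas with the elementary isomorphism $\phi$.
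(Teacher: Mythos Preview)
Your approach is essentially the same as the paper's: apply Lemma~\ref{lem:hopf} with $N=A$, use the duality isomorphism $\phi$ to identify $\Hom(M,A)$ with $A\otimes M^*$, invoke Lemma~\ref{lem:action} to trivialize the action on the $M^*$ factor, and pull $M^*$ out of the $\RHom$. The tracking of the right $A$-module structure is also handled the same way, via the bimodule clause of Lemma~\ref{lem:hopf} and the right-module clause of Lemma~\ref{lem:action}.

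One small correction: your justification for $\Hom(M,A)\simeq A\otimes M^*$ via ``replace $M$ by a cofibrant model that is finitely generated free as a graded $A$-module'' is not available in general, since proper does not imply perfect over an arbitrary dg Hopf algebra. The paper simply invokes ``finite-dimensionality of the cohomology of $M$'' without further comment; the clean way to fill this in is to observe that over a field one may split $M\cong \HH^\bullet(M)\oplus C$ as dg \emph{vector spaces} with $C$ acyclic, and then both $A\otimes(-)^*$ and $\Hom(-,A)$ send $C$ to an acyclic complex, so $\phi$ is a quasi-isomorphism because it is an isomorphism on the finite-dimensional piece $\HH^\bullet(M)$. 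No model of $M$ as a finitely generated $A$-module is needed.
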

\begin{proof}
	Using Lemma \ref{lem:hopf} and finite-dimensionality of the cohomology of $M$, we have an isomorphism of dg vector spaces.
	\begin{align*}
	d(M)&={\RHom}_A(M, A)\\&\cong {\RHom}_A(\bk, \Hom(M,A))
	\\&\cong{\RHom}_A(\bk, A\otimes M^*).
	\end{align*}
	Next,  by Lemma \ref{lem:action},
	$A\otimes M^*\cong A\otimes (M^*)_{\operatorname{tr}}$ and so,
	\[
	{\RHom}_A(\bk, A\otimes (M^*)_{\operatorname{tr}})\cong \Hom(M, {\RHom}_A(\bk,A))=\Hom(M,\bd).
	\]
	Moreover, $A\otimes M^*$ is also a \emph{right} dg $A$-module with the right action of $A$ on itself and the trivial action on $M^*$. This right module structure is carried by the isomorphism	$A\otimes M^*\to A\otimes (M^*)_{\operatorname{tr}}$ of Lemma \ref{lem:action} to the diagonal right action of $A$ on $A\otimes(M^*)_{\operatorname{tr}}$. Thus, the obtained isomorphism $d(M)\cong  \Hom(M,\bd)$ is one in $D(A^{\op})$ rather than merely in $D(\bk)$ as claimed.
	\end{proof}
\subsection{Open Serre duality and open Calabi-Yau structures}
Let $\mathcal{C}$ be a $\bk$-linear triangulated category and  $\mathcal{L}, \mathcal{M}$ be two triangulated subcategories of $\mathcal{C}$. The pair $(\mathcal{L},\mathcal{M})$ determines \emph{open Serre duality} on $\mathcal{C}$  if for all $L\in\mathcal{L}$ and $M\in\mathcal{M}$, the graded hom
$$
\Hom^\bullet(M,L):=\prod_{i=-\infty}^{+\infty}\Hom^i(M,L),~\Hom^i(M,L)={\mathcal C}(M,L[i])
$$
is a finite-dimensional graded vector space and there is an auto-equivalence $F:\mathcal M\to \mathcal M$ together with the degree $0$  bifunctorial isomorphisms of graded vector spaces:
\begin{eqnarray}\label{Calabi-Yau-BCL}
		\Phi_{M,L}:\Hom^\bullet(M,L)\to\Hom^\bullet(L,F(M))^*,~~M\in\mathcal{M},~~L\in\mathcal{L}.
\end{eqnarray}
The functor $F$ is called the \emph{open Serre functor}.
A special case of a category with an open Serre functor is that of an open $n$-CY category:
\begin{defi}\label{defi:nCY}{\rm \cite[Definition 6.10]{BCL}}
An \emph{open  $n$-CY structure} on a triangulated category $\huaC$ is a pair $(\mathcal{L},\mathcal{M})$ of triangulated subcategories of $\huaC$ with open Serre duality as above where $F=[n]$, the $n$th iteration of the shift functor.
\end{defi}

\begin{rem}
In the case $\mathcal C=\mathcal{L}=\mathcal{M}$, 	the definition of an open $n$-CY structure is called an $n$-CY structure (without the adjective `open') and it is equivalent to the one given in \cite[Section 1.1]{Kel}. In particular, it is stronger than the notion of a \emph{weak} $n$-CY structure [Proposition 2.2 (a)]\cite{Kel-2008} (which we have no occasion to use). The proof of this equivalence is given in Appendix \eqref{A}.
\end{rem}

We will now specialize to $\mathcal{C}=D(A)$, the derived category of a dg Hopf algebra $A$, $\mathcal{L}=\Perf(A)$,  the subcategory of $D(A)$ consisting of perfect dg $A$-modules, and $\mathcal{M}=\operatorname{pvd}(A)$, the subcategory of $D(A)$ consisting of proper dg $A$-modules. The following result holds.

\begin{theorem}\label{thm:Serre}
	Let $L$ be a perfect dg $A$-module for a dg Hopf algebra $A$ and $M$ be a proper dg $A$-module. Then there is a natural isomorphism in $D(A)$:
	\[
{\RHom}^\bullet_A(M,L)\cong	{\RHom}^\bullet_A(L,M\otimes \bd^*)^*.
	\]
	Thus, $D(A)$ has open Serre duality with respect to the pair $(\operatorname{Perf}(A),\operatorname{pvd}(A))$ and the open Serre functor {$?\mapsto ?\otimes \bd^*$.}
\end{theorem}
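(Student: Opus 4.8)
The plan is to reduce the Serre duality statement to the computation already carried out in Proposition \ref{prop:duality}, combined with the standard adjunction between $\RHom_A(-,A)$ and $-\otimes_A-$ relating perfect modules and duality. First I would recall that for a perfect dg $A$-module $L$, the duality functor $d$ is an anti-equivalence on the subcategory $\Perf(A)$, with $d(d(L))\cong L$ and, crucially, $\RHom_A(L,N)\cong d(L)\otimes_A N$ naturally in $N\in D(A)$ (this is the usual "perfect modules are dualizable" statement, valid since $L$ is a retract of a finite complex of free modules). Applying this with $N=M$ gives $\RHom^\bullet_A(M,L)\cong\RHom^\bullet_A(M, d(d(L)))$; alternatively, and more usefully, it lets us move $L$ across the Hom.

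Next I would expand the left-hand side. Since $M$ is proper and $A$ is a dg Hopf algebra, Proposition \ref{prop:duality} gives $d(M)\cong\Hom(M,\bd)$ in $D(A^{\op})$, and the duality isomorphism (\ref{eq:dualityiso}) together with finite-dimensionality of $H^\bullet(M)$ identifies $\Hom(M,\bd)\cong \bd\otimes M^*$. On the other hand $\RHom^\bullet_A(L, M\otimes\bd^*)$: using that $L$ is perfect, $\RHom_A(L, M\otimes\bd^*)\cong d(L)\otimes_A(M\otimes\bd^*)$. Taking $\bk$-linear duals and using that $M$ is proper (so all the $H^\bullet$'s in sight are finite-dimensional, and $\bk$-duality is exact and turns $\otimes$ into $\Hom$ appropriately), one wants
\[
\big(d(L)\otimes_A(M\otimes\bd^*)\big)^*\cong \RHom_A(M,L).
\]
The right side, again by perfectness of $L$, is $d(M)\otimes_A L$ — no wait; rather, write $\RHom_A(M,L)\cong \Hom(M,\bd)\otimes_A^{?}\cdots$. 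The cleanest route is: $\RHom^\bullet_A(M,L)\cong \RHom^\bullet_A(\bk, \Hom(M,L))$ by Lemma \ref{lem:hopf}, then use $L$ perfect to write $\Hom(M,L)\cong \Hom(M,A)\otimes_A L\cong (A\otimes M^*)\otimes_A L$ via (\ref{eq:dualityiso}) and Lemma \ref{lem:action}, reducing everything to invariants/coinvariants of $A$-actions that match up after dualizing. In parallel, $\RHom^\bullet_A(L,M\otimes\bd^*)\cong \RHom^\bullet_A(\bk,\Hom(L,M\otimes\bd^*))$ and $\Hom(L,M\otimes\bd^*)\cong L^*\otimes M\otimes\bd^*$ since $L$ is perfect, so this is $\big(L\otimes M^*\otimes\bd\big)^*$-ish up to the $A$-action bookkeeping. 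Comparing the two, the desired isomorphism follows from the fact that for a proper module $X$ over $A$ (here $X$ built from $L, M, \bd$), $\RHom_A(\bk,X)^*\cong \RHom_{A^{\op}}(\bk,X^*)$, i.e. invariants and coinvariants are $\bk$-dual for proper modules — which is essentially the content of $\RHom_A(\bk,-)$ commuting with the duality $d$ after identifying $\bd=d(\bk)$.

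Finally, once the natural isomorphism $\RHom^\bullet_A(L,M\otimes\bd^*)^*\cong\RHom^\bullet_A(M,L)$ is established, I would observe that both sides are finite-dimensional: the left side by inspection (it is a $\bk$-dual of a complex with finite-dimensional cohomology, since $M$ proper and $L$ perfect force $\RHom^\bullet_A(M,L)$ to have finite total dimension — $L$ is a finite extension of shifts of $A$, and $\RHom_A(M,A)=d(M)$ is proper by Proposition \ref{prop:duality}). This verifies the finiteness condition in the definition of Serre duality, and the natural isomorphism exhibits $F(?)=?\otimes\bd^*$ as a Serre functor for the pair $(\Perf(A),\operatorname{pvd}(A))$; that $F$ is an auto-equivalence of $D(A)$ follows because $\bd^*$ is an invertible object (its cohomology will eventually be shown one-dimensional, but at this stage one only needs that $-\otimes\bd^*$ has quasi-inverse $-\otimes\bd$, which holds because $\bd\otimes\bd^*\cong\bk$ as the dualizing complex of a Hopf algebra is $\otimes$-invertible — alternatively one restricts $F$ to a functor between the relevant subcategories).

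\textbf{Main obstacle.} The hard part will be the careful bookkeeping of the several commuting $A$-actions (left, right, diagonal-via-$\Delta$, and twisted-via-the-antipode $S$) so that all the isomorphisms above — (\ref{eq:dualityiso}), Lemma \ref{lem:action}, Lemma \ref{lem:hopf}, and Proposition \ref{prop:duality} — are applied with compatible module structures, and in checking that the resulting composite isomorphism is natural in both $L$ and $M$ (not merely an isomorphism of dg vector spaces). The Hopf-algebraic manipulations are exactly where one must be careful that "proper" is used on $M$ (to make $\phi$ an isomorphism and to make $\bk$-duality interact well with $\otimes_A$) while "perfect" is used on $L$ (to make $L$ dualizable over $A$), and that neither hypothesis is silently swapped.
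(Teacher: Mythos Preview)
Your ingredients are correct but your execution wanders. You actually write down the paper's entire argument and then abandon it: the line ``write $\RHom_A(M,L)\cong \Hom(M,\bd)\otimes_A^{?}\cdots$'' is exactly the route to take. The paper's proof is three lines. First, since $L$ is perfect, $\RHom_A(M,L)\cong d(M)\otimes_A L$ (tensor with a perfect object commutes with $\RHom$). Second, by Proposition~\ref{prop:duality}, $d(M)\cong \Hom(M,\bd)\cong M^*\otimes\bd$ in $D(A^{\op})$, so $\RHom_A(M,L)\cong (M^*\otimes\bd)\otimes_A L$. Third, take the $\bk$-linear dual and use the tensor--hom adjunction:
\[
\big((M^*\otimes\bd)\otimes_A L\big)^*\;\cong\;\RHom_A\big(L,(M^*\otimes\bd)^*\big)\;\cong\;\RHom_A(L,M\otimes\bd^*),
\]
using properness of $M$ for $M^{**}\cong M$. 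That is the whole proof.

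Your detour through $\RHom_A(\bk,\Hom(M,L))$ re-derives Proposition~\ref{prop:duality} inside the argument rather than invoking it, and the claimed isomorphism ``$\RHom_A(\bk,X)^*\cong\RHom_{A^{\op}}(\bk,X^*)$ for proper $X$'' is neither stated nor proved anywhere in the setup --- it is not false, but it is doing the same work as Proposition~\ref{prop:duality} plus tensor--hom adjunction, only less transparently. Your other attempted route, via $d(L)\otimes_A(M\otimes\bd^*)$, can also be made to work but requires you to know that $(M\otimes\bd^*)^*\cong M^*\otimes\bd$, which needs $\bd$ to be proper --- something not available at this level of generality. The paper sidesteps this by dualizing the \emph{other} side, where only properness of $M$ is needed.

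One genuine point you raise that the paper glosses over: whether $?\mapsto ?\otimes\bd^*$ is an auto-equivalence of $D(A)$. You are right that this requires $\bd$ to be $\otimes$-invertible, which is not established for an arbitrary dg Hopf algebra; the paper's later results supply this for $A=\U(\g)$ with $\g$ proper, but the theorem as stated is really only asserting the duality isomorphism and calling the functor a Serre functor somewhat informally.
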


\begin{proof}
	Since $L$ is perfect and $M$ is proper, we have
	\begin{align*}{\RHom}_A(M,L)&\cong d(M)\otimes_AL\\
		&\cong \Hom(M, \bd)\otimes_AL\\
		&\cong (M^*\otimes \bd)\otimes_AL
	\end{align*}	
	where the second isomorphism uses Proposition \ref{prop:duality}. Next, by the tensor-hom adjunction, we have
	\begin{align*}
	{\RHom}_A(M,L)^*&\cong{\big((M^*\otimes \bd)\otimes_AL\big)^*}\\
&\cong {\RHom}_A(L,(M^*\otimes \bd)^*)\\
	&\cong {\RHom}_A(L,M^{**}\otimes \bd^*)\\
	&\cong {\RHom}_A(L,M\otimes \bd^*).
	\end{align*}
Thus, we deduce that ${\RHom}^\bullet_A(M,L)\cong	{\RHom}^\bullet_A(L,M\otimes \bd^*)^*$.	This finishes the proof.
\end{proof}

As a consequence, we obtain that the derived category of a Gorenstein Hopf dg algebra has an open Calabi-Yau structure.
\begin{cor}\label{cor:HopfCY}
	Let $A$ be a dg Hopf algebra that is $n$-Gorenstein, i.e. $\RHom_A(\bk,A)\cong \bk[-n]$. Then the pair $(\Perf(A),\operatorname{pvd}(A))$ is an open $n$-Calabi-Yau structure on $D(A)$.
\end{cor}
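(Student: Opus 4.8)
The plan is to deduce Corollary \ref{cor:HopfCY} directly from Theorem \ref{thm:Serre} by unravelling what the Gorenstein hypothesis does to the dualizing complex $\bd$. By definition, $A$ being $n$-Gorenstein means $\bd = d(\bk) = \RHom_A(\bk,A) \simeq \bk[-n]$ as a right dg $A$-module, where the right $A$-action on $\bk[-n]$ is necessarily the trivial one (through the augmentation), since any dg algebra endomorphism of the one-dimensional module is trivial in cohomology. Dualizing, $\bd^* \simeq \bk[n]$ with the trivial $A$-action.

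The key step is then to observe that for any dg $A$-module $M$, the Serre functor $? \mapsto ? \otimes \bd^*$ applied to $M$ gives $M \otimes \bd^* \simeq M \otimes \bk[n] \cong M[n]$ in $D(A)$. Here I would be careful that the tensor product $M \otimes \bd^*$ carries the diagonal $A$-action via the comultiplication $\Delta$, but since $\bd^*$ has the trivial (counit) action, the diagonal action on $M \otimes \bk[n]$ is, by the counit axiom $(\id \otimes \epsilon)\Delta = \id$, just the original action on $M$ shifted by $n$. So the Serre functor $F$ satisfies $F(X) \cong X[n]$ for all $X \in D(A)$, which is precisely the definition of an $n$-CY structure given just before Theorem \ref{thm:Serre}.

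Concretely, I would write: Theorem \ref{thm:Serre} furnishes, for $L \in \Perf(A)$ and $M \in \operatorname{pvd}(A)$, a natural isomorphism $\RHom_A^\bullet(L, M \otimes \bd^*)^* \cong \RHom_A^\bullet(M,L)$; substituting $\bd^* \simeq \bk[n]$ with trivial action and using the counit axiom to identify $M \otimes \bk[n]$ with $M[n]$ as $A$-modules, this becomes $\RHom_A^\bullet(L, M[n])^* \cong \RHom_A^\bullet(M,L)$, which exhibits the pair $(\Perf(A), \operatorname{pvd}(A))$ as an $n$-CY structure on $D(A)$.

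The only genuine point requiring care — and thus the "main obstacle," though it is minor — is verifying that the Gorenstein condition forces the \emph{$A$-module} structure on $\bd$ (not merely its underlying complex) to be the trivial one up to the shift, so that the identification $M \otimes \bd^* \cong M[n]$ holds in $D(A)$ and not just in $D(\bk)$. This follows because $\bd$ is cohomologically concentrated in one degree and one-dimensional there, so its dg $A$-module structure is determined by a map $A \to \End(\bk[-n]) = \bk$ which must coincide with the augmentation; alternatively one can invoke the Koszul-dual reformulation already recorded in the excerpt (that $\B(A)$ is an $n$-Frobenius coalgebra) to see that the twist is trivial. Everything else is a formal consequence of Theorem \ref{thm:Serre}.
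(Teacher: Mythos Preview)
Your approach is the same as the paper's (which simply says ``This follows immediately from Theorem \ref{thm:Serre}''), and the core deduction is correct: under the Gorenstein hypothesis the Serre functor $?\otimes\bd^*$ becomes $?\mapsto ?[n]$, whence the CY structure.

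However, your treatment of what you call the ``main obstacle'' is misguided. You argue that the right $A$-action on $\bd$ is \emph{necessarily} trivial because ``its dg $A$-module structure is determined by a map $A\to\End(\bk[-n])=\bk$ which must coincide with the augmentation.'' This reasoning is incorrect: a one-dimensional $A$-module is indeed specified by a character $A\to\bk$, but nothing forces that character to be the augmentation. In fact the central calculation of the paper (Theorem \ref{thm:super-g-trace}) shows that for $A=\U(\g)$ the character is the supertrace of the adjoint representation, which is the augmentation only in the unimodular case. The point you are missing is that no argument is needed here at all: the paper's definition of $n$-Gorenstein (given just before the definition of the dualizing complex) is precisely that $d(\bk)\simeq\bk[-n]$ \emph{as a right $A$-module}, i.e.\ with the trivial action. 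So the identification $M\otimes\bd^*\cong M[n]$ in $D(A)$ is immediate from the hypothesis, and your counit-axiom verification is the right way to check it. Drop the spurious justification and the proof is clean.
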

\begin{proof}
	This follows immediately from Theorem \ref{thm:Serre}.
\end{proof}
\section{Dualizing complexes for dg Lie algebras}\label{sec:dualizing}
In this section we let $A=\U(\g)$, the universal enveloping algebra of a dg Lie algebra. We will assume that $\g$ is proper, i.e. that $\HH^\bullet(\g)$ is a totally finite-dimensional graded vector space. The dualizing complex for $\U(\g)$ is, therefore the dg right $\U(\g)$-module $\RHom_{\U(\g)}(\bk, \U(\g))\cong\CE^\bullet(\g,\U(\g))$ where the right action is induced by the action of $\U(\g)$ on itself by right multiplication. Then $\U(\g)$ is a dg Hopf algebra, and so, results of 	Section \ref{section:Hopf} apply. However, in this situation we can obtain much more precise results than for general dg Hopf algebras, and in the case when $\g$ is a graded Lie algebra (i.e. with a vanishing differential), further strengthening will be achieved.

Let $V$ be a graded vector space  of finite total dimension (so that the sum of dimensions of the graded components of $V$ is finite). Let us choose a basis $e_1,\ldots,e_n,\varepsilon_1,\ldots, \varepsilon_m$ of $V$ where $e_i,i=1,\ldots,n$ are even elements of $V$ and $\varepsilon_j,j=1,\ldots, m$ are odd elements. Set $|V|:=n-|e_1|-\ldots-|e_n|+|\varepsilon_1|+\ldots+|\varepsilon_m|$ and note that the integer $|V|$ does not depend on the choice of  basis in $V$. Note also that $|V|$ is \emph{not} a quasi-isomorphism invariant; we owe this elementary but important observation to an anonymous referee.
Specifically, write $V\cong \HH^\bullet(V)\oplus V_1\oplus V_2$ where $V_1$ is a contractible dg vector space with $d(V_1)$ situated in even degrees and $V_2$ is a contractible dg vector space with $d(V_2)$ being odd. Then $|V_1|=0$ and $|V_2|$ is an even number. Moreover,  $|V|=|\HH^\bullet(V)|+|V_1|+|V_2|=|\HH^\bullet(V)|+|V_2|$. In particular,  $|V|=|\HH^\bullet(V)| \mod 2$ and $|V|=|\HH^\bullet(V)|$ provided $V_2=0$.

 Then we have the following result.

\begin{theorem}\label{the:1-dim} Let $\g$ be a proper dg Lie algebra.
	Then the dualizing complex $\CE^\bullet(\g, \U(\g))$ has 1-dimensional cohomology concentrated in the total degree $|\HH^\bullet(\g)|$.
\end{theorem}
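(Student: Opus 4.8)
The plan is to prove one-dimensionality of $\HH^\bullet(\CE^\bullet(\g,\U(\g)))$ by a spectral sequence argument, reducing the dg case to the case of a graded Lie algebra, and then to pin down the degree. First I would observe that since $\g$ is proper, one may choose a quasi-isomorphism of dg Lie algebras connecting $\g$ to its cohomology $\HH^\bullet(\g)$ equipped with some $L_\infty$-structure (a minimal model); but rather than invoking minimal models at this level of generality, a cleaner route is to work directly with the CE complex. Recall $\CE^\bullet(\g,\U(\g))\cong\Hom(\CE_\bullet(\g),\U(\g))$ with $\CE_\bullet(\g)=\huaS(\g[1])$ as a graded coalgebra, so as a graded vector space the dualizing complex is $\hat{\huaS}(\g^*[-1])\otimes\U(\g)$. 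The differential has two pieces: one coming from the internal differential of $\g$ and the CE differential, and one from the right $\U(\g)$-action.

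The key step is to set up the filtration by powers of the augmentation ideal of $\U(\g)$ (the PBW filtration), or dually by symmetric degree in $\g[1]$. On the associated graded, the differential coming from the $\U(\g)$-action becomes the Koszul differential, and one recognizes $\operatorname{gr}\CE^\bullet(\g,\U(\g))$ — after also filtering away the internal differential of $\g$ — as the Koszul complex $\hat{\huaS}((\HH^\bullet\g)^*[-1])\otimes\huaS(\HH^\bullet\g)$ computing $\operatorname{Tor}^{\huaS(\HH^\bullet\g)}(\bk,\huaS(\HH^\bullet\g))$, up to completion. Since $\huaS(V)$ is a free module over itself, this Tor is one-dimensional, sitting in Koszul-degree zero. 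Running the spectral sequences (first in the internal differential of $\g$, then in the PBW filtration) and checking convergence — the filtration is complete and exhaustive on each finite-dimensional graded piece because $\g$ is proper — collapses everything to a one-dimensional space.

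To locate the total degree, I would track degrees through the Koszul identification. The generator of $\operatorname{Tor}$ on the associated graded is the class of $1\otimes 1$ twisted by the top Koszul element: if $x_1,\dots$ is a homogeneous basis of $\HH^\bullet(\g)$, the surviving class is represented (Berezinian-style) by the product of the $x_i[1]$ over the even part tensored against the dual generators over the odd part, and its total degree is exactly the combination $|\g|=n-\sum|e_i|+\sum|\varepsilon_j|$ defined just above, reflecting the shift $[-1]$ on the $\g^*$ side versus $[1]$ on the $\g$ side and the fact that odd generators of a symmetric algebra behave like a polynomial (infinite-dimensional) variable whereas even ones in $\hat{\huaS}(\g^*[-1])$ are odd and hence exterior. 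This degree bookkeeping is the fiddly part but is entirely mechanical once the basis is fixed.

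The main obstacle I anticipate is not the algebra of the spectral sequence but its \emph{convergence and the passage from $\g$ to $\HH^\bullet(\g)$}: one must be careful that replacing $\g$ by its cohomology does not change the answer, which is where properness of $\g$ is essential, and that the completion in $\hat{\huaS}(\g^*[-1])$ is compatible with taking cohomology degreewise. Concretely I would argue that in each fixed total degree the complex $\CE^\bullet(\g,\U(\g))$ is a finite-dimensional-in-each-PBW-filtration-stratum object, so the filtration is exhaustive and bounded-below in that degree, giving conditional convergence that upgrades to genuine convergence; this lets the spectral sequence for the internal differential of $\g$ run first and reduce to the graded case without loss. Everything else — the Koszul complex identification and the degree count — then follows as sketched. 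The finer statement identifying the $\g$-action on this one-dimensional cohomology with the supertrace of the adjoint representation is deferred to Section \ref{sec:technical} and is not needed here.
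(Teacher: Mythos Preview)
Your overall strategy matches the paper's: reduce to the graded (zero-differential) case by a spectral sequence, then filter so that the associated graded becomes the Koszul complex for the abelianized Lie algebra, and read off one-dimensionality. But there is a genuine slip in the heart of the argument.

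You write that the associated graded computes $\operatorname{Tor}^{\huaS(\HH^\bullet\g)}(\bk,\huaS(\HH^\bullet\g))$ and justify one-dimensionality by ``$\huaS(V)$ is a free module over itself''. This is the wrong functor: $\CE^\bullet(\g,\U(\g))$ represents $\RHom_{\U(\g)}(\bk,\U(\g))$, so on the associated graded you are computing $\operatorname{Ext}_{\huaS(\g)}(\bk,\huaS(\g))$, not $\operatorname{Tor}$. The distinction is not cosmetic. The $\operatorname{Tor}$ you wrote is trivially $\bk$ in degree zero for the reason you gave, but that is not the calculation at hand; the one-dimensionality of $\operatorname{Ext}_{\huaS(V)}(\bk,\huaS(V))$ is the nontrivial Koszul/Berezinian computation, and its generator does \emph{not} sit in ``Koszul-degree zero'' but in the top exterior degree coming from the odd generators of $\hat{\huaS}(\g^*[-1])$ tensored with the top exterior part of $\huaS(\g)$. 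Your later Berezinian-style description of the surviving class is in fact the correct $\operatorname{Ext}$ picture, so the confusion is local, but as written the argument is internally inconsistent: the degree bookkeeping you sketch contradicts the ``degree zero'' claim two sentences earlier.

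On convergence: the paper, after reducing to the graded case, exploits the resulting \emph{bigrading} (total degree plus CE degree) and observes that restricted to a fixed CE degree $n$ the weight filtration is bounded below, which gives strong convergence cleanly. Your version (``finite-dimensional-in-each-PBW-filtration-stratum in each fixed total degree'') is not quite right as stated, since a fixed total degree can still contain infinitely many monomials once $\g$ has odd components (power series in the even generators of $\g^*[-1]$); fixing the CE degree instead is what makes the filtration bounded.
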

\begin{proof}
	Firstly, using the spectral sequence
	\[
	\HCE^\bullet\Big(\HH^\bullet(\g),\U(\HH^\bullet(\g))\Big)\Rightarrow \HCE^\bullet(\g, \U(\g)),
	\]
we reduce the claim to the special case when $\g$ is a graded Lie algebra (with vanishing differential, so $\g=\HH^\bullet(\g)$).	

Next, we have the following identification of graded vector spaces
	\[
	\CE^\bullet(\g, \U(\g))\cong \hat{\huaS}(\g^*[-1])\hat{\otimes}\U(\g).
	\]
	Introduce the weight filtration on $\hat{\huaS}(\g^*[-1])\hat{\otimes}\U(\g)$ by giving an element in $\g^*[-1]\subset \hat{\huaS}(\g^*[-1])$ weight $1$ and an element in $\g\in \U(\g)$ weight $-1$.   Letting $F_p \hat{\huaS}(\g^*[-1])\hat{\otimes}\U(\g)$ be the subspace  in $\hat{\huaS}(\g^*[-1])\hat{\otimes}\U(\g)$ spanned by monomials of weight greater or equal than $-p$, we get an increasing filtration on $	\CE^\bullet(\g, \U(\g))$ of the form $\ldots\subset F_n\subset F_{n+1}\subset\ldots, n\in\mathbb{Z}$. This filtration is exhaustive and the differential in 	$\CE^\bullet(\g, \U(\g))$ is compatible with it. Moreover, since $\g$ is itself graded (nondifferential), the complex 	$\CE^\bullet(\g, \U(\g))$ has a bigrading, with the additional CE grading so that an element in 	$\hat{\huaS}^n(\g^*[-1])\hat{\otimes}\U(\g)$ has the CE grading $n$. Restricted to $\hat{\huaS}^n(\g^*[-1])\hat{\otimes}\U(\g)$, the  filtration $\{F_p\}$ is bounded below, namely, it runs as $0\subset F_{-n}\subset F_{-n+1}\subset\ldots$. Thus, the spectral sequence associated with the filtration $\{F_p\}$ converges strongly.
	
	Moreover, the $E_1$-page of the associated spectral sequence coincides with the CE cohomology of the \emph{abelian} graded Lie algebra with the same underlying graded vector space as $\g$. In other words, it computes
	 $\operatorname{Ext}_{\huaS(\g)}(\bk,\huaS(\g))$ where the symmetric algebra $\huaS(\g)$ acts on itself by the left multiplication. The claim now becomes a standard homological calculation with the Koszul complex.
\end{proof}

So, the dualizing complex $\bd:=d(\bk)$ for any proper dg Lie algebra $\g$ has one-dimensional cohomology; according to our conventions, we will call it the \emph{dualizing module} for $\g$. The $\g$-action (or, equivalently, $\U(\g)$-action) on $\bd$ is a map of dg Lie algebras $\g\to \End(V)$ where $V$ is a dg vector space that represents $\bd$ in $D(\U(\g))$. Equivalently, this is a map of dg algebras $\U(\g)\to\End(V)$. Since $\HH^\bullet(V)$ is 1-dimensional, the dg Lie algebra $\End(V)$ is quasi-isomorphic to $\bk$ with the zero bracket and, assuming without loss of generality that $\g$ is a cofibrant Lie algebra, we obtain that a $\g$-module structure on $\bd$ is realized by a dg Lie algebra map from $\g$ to the abelian Lie algebra $\bk$ or as a dg algebra map $\U(\g)\to \bk$. Another equivalent point of view, which  does not assume cofibrancy of $\g$, is that the structure of a $\g$-module on a one-dimensional space is determined by an $L_\infty$-map $\g\to\bk$ or an $A_\infty$-map $\U(\g)\to \bk$. This latter point of view will be used later on.

In any case, the data of an $L_\infty$-map $\g\to\bk$ are equivalent to that of an MC element in the pc dg algebra $\CE^\bullet(\g)$. Since $\CE^\bullet(\g)$ is graded commutative, this is further equivalent to a CE 1-cocycle of $\g$, and cohomologous cocycles give rise to homotopic $L_\infty$-maps (or different $\g$-modules representing the same object in $D(\U(\g))$. We have the following definition.
\begin{defi}
Given a proper dg algebra Lie $\g$, denote by $\chi(\g)\in\CE^1(\g)$ the CE cohomology class of $\g$ corresponding to the dualizing module $\bd$.
\end{defi}
\begin{prop}\label{D-dual-CE}
	Let $M$ be a proper dg $\g$-module where $\g$ is a proper dg Lie algebra and consider $M^!:=\CE_\bullet(\g,M)$. Then there is a weak equivalence of $\CE_\bullet(\g)$-comodules:
	\[
	d(M^!)\simeq \CE_\bullet(\g, \bd\otimes M^*).
	\]
	
	In particular (when $M$ is the trivial $\g$-module $\bk$), the comodule $d(\CE_\bullet(\g))$ is weakly equivalent to the comodule whose dual pc module is $\CE^\bullet(\g)^{[\chi(\g)]}$, the twist of $\CE^\bullet(\g)$ by the MC element $\chi(\g)$.
\end{prop}
\begin{proof}
	The formula for $d(M^!)$ follows directly from Proposition \ref{prop:duality}. This further implies the claimed description of $d(\CE_\bullet(\g))$.
\end{proof}		
 
\section{Dualizing complexes for graded Lie algebras}\label{sec:technical}
In this section we consider the case of a graded Lie algebra $\g$ (i.e. a dg Lie algebra with vanishing differential). The dualizing module $\g$-module $\bd(\g):=\RHom_{\U(\g)}(\bk,\U(\g))$ for $\g$ is identified explicitly, in complete analogy with the ungraded case. This requires a calculation in coordinates with the CE complex $\CE^\bullet(\g,\U(\g))$. Since the calculation in the graded case is quite involved, let us first describe the underlying strategy.

We have seen previously (Theorem \ref{the:1-dim}) that a spectral sequence argument shows that the cohomology of $\CE^\bullet(\g,\U(\g))$ is one-dimensional. Moreover, if $\g$ is abelian, one can write an explicit nontrivial cocycle (which is often referred to as the Berezinian of $\g$). The action of $\g$ on it is (in the abelian case), of course, trivial. The first step is to compute (in the non-abelian case) an explicit representative of a nontrivial cocycle of $\CE^\bullet(\g,\U(\g))$. This requires an application of homological perturbation theory and the result is what we call a `deformed Berezinian'. The next step is to compute the $\U(\g)$ action of the one-dimensional space spanned by the deformed Berezinian. For this, we use an explicit form of an $A_\infty$-minimal model of modules over an $A_\infty$-algebra $\U(\g)$. In our situation, $\U(\g)$ is an ordinary graded algebra but its modules can still support a nontrivial $A_\infty$-structure. Nevertheless, it turns out that no higher $A_\infty$-correction terms are present. This is due to the fact that $\U(\g)$ is a graded algebra (i.e. with vanishing differential) and so, $\CE^\bullet(\g,\U(\g))$ has a canonical bigrading. The last step is just to compute the lowest (and only) term of the $A_\infty$-action of $\U(\g)$ on the deformed Berezinian. This is where the trace of the adjoint action of $\g$ appears.

 We start by spelling out in detail the formulae for the CE differentials, including the sign conventions.

Let $\g$ be finite dimensional $\mathbb Z$-graded Lie algebra over the field $\bk$ and let $\{e_1,\ldots,e_n\}$ be a  homogeneous basis of $\g$. For $i=1,\ldots,n$, we denote $e_i[1]$ by $x_i$.
Then  $\{x_1,\ldots,x_n\}$ is a basis of $\g[1]$ and $\{x^1,\ldots,x^n\}$ the dual basis of $(\g[1])^*$, so that $\langle x^i,x_j\rangle=\delta_i^j$.

Moreover, we have the structure constants, $\{N_{ij}^k\}$  given by
\begin{eqnarray}\label{structural constant}
[e_i,e_j]=\sum_{k=1}^{n}N_{ij}^k e_k.
\end{eqnarray}
The CE coalgebra $\CE_\bullet(\g)$ has $\huaS^c(\g[1])$ as its underlying cofree conilpotent cocommutative coalgebra and the codifferential $\delta_{\CE}$ is determined by the formula
\begin{eqnarray}\label{codifferential}
\nonumber&&\delta_{\CE}(x_ix_j)=\sum_{k=1}^{n}(-1)^{|x_i|+1}N_{ij}^k x_k.
\end{eqnarray}
The CE algebra $\CE^\bullet(\g)=[\huaS^c(\g[1])]^*\cong\hat{\huaS}[(\g[1])^*]$ is the linear dual of the CE coalgebra $\CE_\bullet(\g)$, that is, the CE differential $d_{\CE}$ is given by
\begin{eqnarray}\label{CE-Differential}
\langle d_{\CE}(X),A\rangle=-(-1)^{|X|}\langle X,\delta_{\CE}(A)\rangle,\,\,\,\,\forall X\in\CE^\bullet(\g), A\in \CE_\bullet(\g).
\end{eqnarray}
Since the characteristic of the field $\bk$ is zero, the symmetric coinvariants  $[(\g[1])^{\otimes n}]_{\mathbb S_{n}}$ are isomorphic to the invariants $[(\g[1])^{\otimes n}]^{\mathbb S_{n}}$. More precisely, we can define mutually inverse maps \cite{HL}
\begin{eqnarray*}
i_n:[(\g[1])^{\otimes n}]_{\mathbb S_{n}}\to [(\g[1])^{\otimes n}]^{\mathbb S_{n}},\,\,\,\pi_n:[(\g[1])^{\otimes n}]^{\mathbb S_{n}}\to [(\g[1])^{\otimes n}]_{\mathbb S_{n}}
\end{eqnarray*}
by the formulae
\begin{eqnarray}
\label{iso-one}&&i_n(\xi_1\ldots\xi_n)=\sum_{\sigma\in \mathbb S_{n}}\varepsilon(\sigma;\xi_1,\ldots,\xi_n)\xi_{\sigma(1)}\otimes\ldots\otimes\xi_{\sigma(n)},\\
\label{iso-two}&&\pi_n(\xi_{1}\otimes\ldots\otimes\xi_{n})=\frac{1}{n!}\xi_1\ldots\xi_n.
\end{eqnarray}
By \eqref{CE-Differential}, we can assume that
\begin{eqnarray}\label{d_CE}
 d_{\CE}=\sum_{k=1}^n \sum_{1\le p, q\le n}\frac{1}{2} \alpha_{pq}^k x^p x^q\frac{\partial}{\partial x^k},~\alpha_{pq}^k=(-1)^{|x^p||x^q|}\alpha_{qp}^k\in \bk.
\end{eqnarray}
Let us express the above formula in terms of the constants $\{N^k_{pq}\}$ rather than  $\{\alpha^{k}_{pq}\}$. For $X=x^k$ and $A=x_ix_j$, we have
\begin{eqnarray*}
\langle d_{\CE}(x^k),x_ix_j\rangle&\stackrel{\eqref{d_CE}}{=}&\langle \sum_{1\le p,q\le n} \frac{1}{2} \alpha_{pq}^k x^p x^q,x_ix_j\rangle\\
&\stackrel{\eqref{iso-one}}{=}&\sum_{1\le p,q\le n} \frac{1}{2}\alpha_{pq}^k x^p x^q\big(x_i\otimes x_j+(-1)^{|x_i||x_j|}x_j\otimes x_i\big)\\
&{=}&\frac{1}{2}(-1)^{|x_i||x^j|}\alpha_{ij}^k x^i(x_i) x^j(x_j)+\frac{1}{2}(-1)^{|x_i||x_j|+|x_j||x^i|}\alpha_{ji}^k x^j(x_j) x^i(x_i)\\
&=&(-1)^{|x_i||x^j|}\alpha_{ij}^k.
\end{eqnarray*}
On the other hand,  we have
\begin{eqnarray*}
-(-1)^{|x^k|}\langle x^k,\delta_{\CE}(x_ix_j)\rangle&\stackrel{\eqref{codifferential}}{=}&(-1)^{|x^k|+1}\langle x^k,\sum_{p=1}^{n}(-1)^{|x_i|+1}N_{ij}^p x_p\rangle\\
&{=}&(-1)^{|x^k|+|x_i|}N_{ij}^k.
\end{eqnarray*}
We deduce that $\alpha_{ij}^k=(-1)^{1+(|x_i|+1)|x_j|}N_{ij}^k$. It follows that
\begin{eqnarray}\label{g-Lie-to-dCE}
 d_{\CE}=-\frac{1}{2}\sum_{k=1}^n \sum_{1\le p,q\le n} (-1)^{(|x_p|+1)|x_q|}N_{pq}^k x^p x^q\frac{\partial}{\partial x^k}.
\end{eqnarray}

From now on, we will distinguish the even and odd elements of the basis of the graded Lie algebra $\g$.  We assume that $\{e_1,\ldots,e_n;\varepsilon_{1},\ldots,\varepsilon_{m}\}$ is a  homogeneous basis of $\g$ and with elements $\{e_1,\ldots,e_n\}$ even and $\{\varepsilon_{1},\ldots,\varepsilon_{m}\}$  odd. We denote $e_i[1]$ by $x_i$ and $\varepsilon_i[1]$ by $y_i$ respectively. Then  $\{x_1,\ldots,x_n;y_1,\ldots,y_m\}$ is a  basis of $\g[1]$ and $\{x^1,\ldots,x^n;y^1,\ldots,y^m\}$ the dual basis of $(\g[1])^*$, which is associated with the basis $\{x_1,\ldots,x_n;y_1,\ldots,y_m\}$ of $\g[1]$.

Moreover, we have three families $\{a_{ij}^k\}$, $\{b_{ij}^k\}$ and $\{c_{ij}^k\}$ of structure constants  given by the following formulae:
\begin{eqnarray}\label{structural constant-1}
[e_i,e_j]=\sum_{k=1}^{n}a_{ij}^k e_k,\,\,[e_i,\varepsilon_j]=\sum_{k=1}^{m}b_{ij}^k \varepsilon_k,\,\,[\varepsilon_i,\varepsilon_j]=\sum_{k=1}^{n}c_{ij}^k e_k.
\end{eqnarray}

By \eqref{g-Lie-to-dCE}, we deduce that
{\small{
\begin{eqnarray}\label{DCE-diff}
d_{\CE}=-\frac{1}{2}\sum_{k=1}^n \left\{\sum_{1\le p,q\le n} a_{pq}^k x^p x^q+\sum_{1\le p,q\le m} c_{pq}^k y^p y^q\right\}\frac{\partial}{\partial x^k}-\sum_{k=1}^m \left\{\sum_{1\le p\le n\atop  1\le q\le m} b_{pq}^k x^py^q\right\}\frac{\partial}{\partial y^k}.
\end{eqnarray}
}}

Let us consider the  object $\RHom_{\U(\g)}(\bk,\U(\g))$ in $D(\U(\g)^{\op})$. It is represented by a right $\U(\g)$-module $\CE^\bullet(\g, \U(\g))$ with the underlying graded vector space is $\hat{\huaS}[(\g[1])^*]\hat{\otimes} \U(\g)$. The differential  $\delta$ and the right $\U(\g)$-module structure on $\hat{\huaS}[(\g[1])^*]\hat{\otimes} \U(\g)$ are given by
\begin{eqnarray*}
\delta(A\otimes B)&=&d_{\CE}(A)\otimes B+\sum_{i=1}^{n}x^iA\otimes e_i B+\sum_{i=1}^{m}{(-1)^{|A|}}y^i A\otimes \varepsilon_i B,\\
       (A\otimes B).u&=&A\otimes Bu,~\forall A\otimes B\in \hat{\huaS}[(\g[1])^*]\hat{\otimes} \U(\g),~u\in \g.
\end{eqnarray*}

We will use the homological interpretation of the Berezinian and the homological perturbation lemma to compute the right action of  $\U(\g)$ on $\RHom_{\U(\g)}(\bk,\U(\g))$. Recall that the PBW isomorphism $\Phi: \huaS(\g)\to \U(\g)$ is given by the following formula:
\begin{eqnarray}\label{PBW}
\Phi(u_1\ldots u_k)=\sum_{\sigma\in \mathbb S_{k}}\frac{\varepsilon(\sigma;u_1,\ldots,u_k)}{k!}u_{\sigma(1)}\ldots u_{\sigma(k)},~k=0,1,\ldots,~u_1,\ldots, u_k\in\g.
\end{eqnarray}
Using this isomorphism, one can define the Gutt star product $*$ on $\huaS(\g)$ which is the deformation quantization  of the Lie-Poisson algebra $\huaS(\g)$, \cite{Gutt}. The Gutt star product is given by
\begin{eqnarray}\label{Gutt}
\nonumber X*X'&=&\Phi^{-1}\Big(\Phi(X)\Phi(X')\Big)=\sum_{i=1}^{p+q}\pr_i\Big(\Phi^{-1}\big(\Phi(X)\Phi(X')\big)\Big)\\
    &=&XX'+\frac{1}{2}[X,X']+\sum_{i=1}^{p+q-2}\pr_i\Big(\Phi^{-1}\big(\Phi(X)\Phi(X')\big)\Big),
\end{eqnarray}
where $X\in \huaS^p(\g), X'\in \huaS^q(\g)$ and $\pr_i: \huaS(\g)\to \huaS^i(\g)$ is the projection onto the homogeneous part of polynomial degree $i$.

Using the isomorphism $\id\otimes\Phi$, one can transfer the right  dg $\U(\g)$-module structure from $\hat{\huaS}[(\g[1])^*]\hat{\otimes} \U(\g)$ to $\hat{\huaS}[(\g[1])^*]\hat{\otimes} \huaS(\g)$.
The differential operator $\partial$ on $\hat{\huaS}[(\g[1])^*]\hat{\otimes} \huaS(\g)$ is defined by
\begin{eqnarray}\label{Gutt-differential}
\nonumber\partial(A\otimes X)&=&\Big((\id\otimes \Phi^{-1})\delta(\id\otimes \Phi)\Big)(A\otimes X)\\
\nonumber&=&d_{\CE}(A)\otimes X+\sum_{i=1}^{n}x^iA\otimes \Phi^{-1}\big(e_i \Phi(X)\big)+\sum_{i=1}^{m}(-1)^{|A|}y^i A\otimes \Phi^{-1}\big(\varepsilon_i \Phi(X)\big)\\
&=&d_{\CE}(A)\otimes X+\sum_{i=1}^{n}x^iA\otimes e_i * X+\sum_{i=1}^{m}{(-1)^{|A|}}y^i A\otimes \varepsilon_i * X.
\end{eqnarray}
The right $\U(\g)$-module structure on $\hat{\huaS}[(\g[1])^*]\hat{\otimes} \huaS(\g)$ is defined by
\begin{eqnarray}\label{Gutt-representation}
\nonumber(A\otimes X)_*u&=&(\id\otimes \Phi^{-1})\Big([(\id\otimes \Phi)(A\otimes X)].u\Big)=A\otimes \Phi^{-1}\big(\Phi(X)u\big)\\
&=&A\otimes Xu+\frac{1}{2}A\otimes [X,u]+\sum_{i=1}^{p-1}A\otimes\pr_i\Big(\Phi^{-1}\big(\Phi(X)u\big)\Big).
\end{eqnarray}

Let us now recall the notion of an  abstract Hodge decomposition on a dg vector space, {\rm \cite[Definition 2.1]{CL}}.
\begin{defi}\label{Hodge}
An {\bf abstract Hodge decomposition} on a dg vector space $(V,d)$ is a pair of operators $s$ and $t$ on $V$
\begin{eqnarray}
s:V\to V,~~~~\,\,\,\, t:V\to V
\end{eqnarray}
of degrees $-1$ and $0$ respectively, such that
\begin{itemize}
			
			\item[(1)]$ds+sd=\id-t$,

            \item[(2)]$d t=t d$,

            \item[(3)]$t^2=t$,

            \item[(4)]$s^2=st=ts=0$.
		\end{itemize}
\end{defi}

We define $d:\hat{\huaS}[(\g[1])^*]\hat{\otimes} \huaS(\g)\to \hat{\huaS}[(\g[1])^*]\hat{\otimes} \huaS(\g)$ as follows:
\begin{eqnarray}
d(A\otimes X)=\sum_{i=1}^{n}x^iA\otimes e_i X+\sum_{i=1}^{m}(-1)^{|A|}y^i A\otimes \varepsilon_i X.
\end{eqnarray}
The Laplacian operator $\Delta:\hat{\huaS}[(\g[1])^*]\hat{\otimes} \huaS(\g)\to \hat{\huaS}[(\g[1])^*]\hat{\otimes} \huaS(\g)$ is defined by
\begin{eqnarray*}
\Delta=\sum_{i=1}^{n}\frac{\partial}{\partial e_i}\frac{\partial}{\partial x^i}+\sum_{i=1}^{m}\frac{\partial}{\partial y^i}\frac{\partial}{\partial \varepsilon_i}.
\end{eqnarray*}
For homogeneous polynomials
 $P\in \bk[e_1,\ldots,e_n,y^1,\ldots,y^m], Q\in \Lambda(x^1,\ldots,x^n,\varepsilon_1,\ldots,\varepsilon_m)$, we denote the polynomial degrees  of  $P,~Q$ by $\deg(P)$ and $\deg(Q)$ respectively. The following result holds.
\begin{prop}
For homogeneous polynomials $P, Q$, the following equation holds
\begin{eqnarray}\label{Berezinian}
[\Delta,d](P\otimes Q)=\big(m+n+\deg(P)-\deg(Q)\big)(P\otimes Q).
\end{eqnarray}
\end{prop}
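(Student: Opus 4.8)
The plan is to verify the commutator identity $[\Delta, d] = \Delta d - d\Delta$ by a direct computation on a pure tensor $P \otimes Q$, exploiting the fact that both operators are (signed) sums of rank-one pieces, one for each index. First I would observe that $d = \sum_{i=1}^n d_i^e + \sum_{j=1}^m d_j^\varepsilon$ where $d_i^e(A \otimes X) = x^i A \otimes e_i X$ and $d_j^\varepsilon(A \otimes X) = (-1)^{|A|} y^j A \otimes \varepsilon_j X$, while $\Delta = \sum_{i=1}^n \partial_{e_i}\partial_{x^i} + \sum_{j=1}^m \partial_{y^j}\partial_{\varepsilon_j}$; here $x^i,\varepsilon_j$ are odd exterior variables and $e_i, y^j$ are even polynomial variables, so $\partial_{x^i}, \partial_{e_i}$ and the multiplication operators satisfy the usual (anti)commutation relations. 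Because the ``$x$/$\varepsilon$ block'' and the ``$e$/$y$ block'' are independent tensor factors, $[\partial_{e_i}\partial_{x^i}, d_k^e]$ splits as a product, and cross terms with different index blocks ($i \neq k$, or an $e$-term against a $y$-term) either vanish or contribute nothing to the diagonal.

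The key step is the standard Weyl/Clifford-algebra bookkeeping on a single index. On the even side, $\partial_{e_i}(e_i X) = X \pm e_i \partial_{e_i} X$ (Leibniz), so $[\partial_{e_i}, e_i\cdot] = \id$ as operators on $\bk[e_1,\dots,e_n,y^1,\dots,y^m]$; dually, on the odd side $\partial_{x^i}(x^i A) + x^i \partial_{x^i}(A) = A$, i.e. $\{\partial_{x^i}, x^i\cdot\} = \id$ on $\Lambda(x^1,\dots,x^n,\varepsilon_1,\dots,\varepsilon_m)$. Carefully combining these — the anticommutator on the exterior side and the commutator on the polynomial side, with the signs dictated by the degree of $A=Q$ that appear in the definition of $d$ — one finds that the index-$i$ piece of $[\Delta, d]$ acts on $P \otimes Q$ as multiplication by $1$ whenever $x^i$ does \emph{not} already appear in $Q$ (the term $d_i^e$ is then nonzero and $\partial_{x^i}$ ``opens a slot''), and by $1$ again, with opposite role, when $x^i$ \emph{does} appear. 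Summing over all $n$ even indices this contributes $n$; but one must also track the grading shift caused by $e_i$ entering $P$, which is where the $\deg(P)$ contribution arises: each $\partial_{e_i}$ landing on a degree-$\deg(P)$ monomial after the multiplication $e_i \cdot$ has raised the degree produces the count $\deg(P)$ rather than a fixed constant, more precisely the Euler-operator identity $\sum_i \partial_{e_i}(e_i \cdot) = (n + \deg)\,\id$ on homogeneous polynomials of degree $\deg$. Symmetrically, the odd indices $j=1,\dots,m$ contribute $m - \deg(Q)$, the minus sign coming from the anticommutator $\{\partial_{x^i}, x^i\}$ versus commutator discrepancy (equivalently the Euler operator on the exterior algebra, $\sum_i x^i \partial_{x^i}$, records $\deg(Q)$ with the relation $\sum_i (x^i\partial_{x^i} + \partial_{x^i} x^i) = m\,\id$). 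Adding the two blocks gives exactly $m + n + \deg(P) - \deg(Q)$.

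The main obstacle I anticipate is purely the sign management: the operator $d$ carries a factor $(-1)^{|A|}$ on the $y$-part but not on the $x$-part, the variables $x^i$ and $\varepsilon_j$ anticommute among themselves and with $\partial_{x^i}, \partial_{\varepsilon_j}$, and $\Delta$ itself mixes an odd-variable derivative with an even-variable derivative, so each of the four cross-products $\partial_{e_i}\partial_{x^i}$ against $d_k^e$, against $d_k^\varepsilon$, and likewise $\partial_{y^j}\partial_{\varepsilon_j}$ against both families of $d$-terms must be expanded with scrupulous attention to Koszul signs. I would organize this by first checking that the ``mismatched-block'' commutators vanish (e.g. $[\partial_{e_i}\partial_{x^i}, d_k^\varepsilon] = 0$ and, for $i \neq k$, $[\partial_{e_i}\partial_{x^i}, d_k^e] = 0$), which reduces the problem to the two diagonal families, and then compute the diagonal $i$-th term $[\partial_{e_i}\partial_{x^i}, d_i^e]$ and the diagonal $j$-th term $[\partial_{y^j}\partial_{\varepsilon_j}, d_j^\varepsilon]$ on a monomial, reading off the coefficient in each of the cases (whether $x^i$, resp. $\varepsilon_j$, occurs in $Q$). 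Once these local computations are done, summing over indices and invoking the Euler-operator identities on $\bk[e,y]$ and on $\Lambda(x,\varepsilon)$ yields \eqref{Berezinian} immediately; by linearity it then extends off the monomial basis to all homogeneous $P \otimes Q$.
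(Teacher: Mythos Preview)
Your approach is sound, and in fact you are doing considerably more than the paper itself does: the paper's entire proof is a citation to \cite[Proposition~4.2]{Covolo}, with no argument given. So there is no paper proof to compare against, only an external reference; your direct Weyl/Clifford computation is exactly the standard way this identity is established and is essentially what the cited reference carries out.

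One small point of exposition to tighten. When you say ``Summing over all $n$ even indices this contributes $n$; but one must also track \ldots\ the $\deg(P)$ contribution,'' and then ``the odd indices $j=1,\dots,m$ contribute $m - \deg(Q)$,'' the bookkeeping is slightly blurred. The $i$-th block $(\partial_{e_i}\partial_{x^i}, d_i^e)$ involves the \emph{polynomial} variable $e_i$ and the \emph{exterior} variable $x^i$; so the $n$ even-index diagonal terms contribute $n + \deg_e(P) - \deg_x(Q)$, not a piece of $\deg(P)$ alone. Symmetrically the $m$ odd-index diagonal terms $(\partial_{y^j}\partial_{\varepsilon_j}, d_j^\varepsilon)$ contribute $m + \deg_y(P) - \deg_\varepsilon(Q)$. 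Only after adding the two blocks do the partial degrees reassemble into $\deg(P)=\deg_e(P)+\deg_y(P)$ and $\deg(Q)=\deg_x(Q)+\deg_\varepsilon(Q)$, giving $m+n+\deg(P)-\deg(Q)$. Your concluding sentence already says this correctly, but the intermediate summary suggests each block sees all of $\deg(P)$ or all of $\deg(Q)$, which it does not. With that clarification, and the sign audit you already flag as the main chore, the argument goes through.
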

\begin{proof}
	This is \cite[Proposition 4.2]{Covolo}.
\end{proof}
\begin{rem}
Since $m+n-\deg(Q)\ge 0$ and $\deg(P)\ge 0$, it follows  that $m+n+\deg(P)-\deg(Q)=0$ if and only if $\deg(P)=0$ and $\deg(Q)=m+n$. Moreover, there is a following isomorphism, cf. \cite[Proposition 4.2]{Covolo}:
$$
\HH^\bullet(\hat{\huaS}[(\g[1])^*]\hat{\otimes}  \huaS(\g),d)\cong \bk\{x^1\ldots x^n\otimes \varepsilon_1\ldots\varepsilon_m\}.
$$
Note that $\HH^\bullet(\hat{\huaS}[(\g[1])^*]\hat{\otimes}  \huaS(\g),d)\cong \operatorname{Ext}^\bullet_{\huaS(\g)}(\bk, \huaS(\g))$. This one-dimensional vector space (or its basis vector $x^1\ldots x^n\otimes \varepsilon_1\ldots\varepsilon_m$) is called the \emph{Berezinian} (or superdeterminant) of the graded vector space $\g$; this is the standard cohomological interpretation of the graded Berezinian \cite{Covolo, manin, delmorgan}. In our situation, $\g$ is a graded Lie algebra, and there is a $\g$-action on the Berezinian of $\g$. The calculation of this action is the main result of this section.
\end{rem}
We denote by $t:\hat{\huaS}[(\g[1])^*]\hat{\otimes}  \huaS(\g)\to \hat{\huaS}[(\g[1])^*]\hat{\otimes}  \huaS(\g)$ the operator of projection onto the one-dimensional vector space $\bk\{x^1\ldots x^n\otimes \varepsilon_1\ldots\varepsilon_m\}$ (usually called the Berezinian module for $\g$). Furthermore, define the operator
$s:\hat{\huaS}[(\g[1])^*]\hat{\otimes}  \huaS(\g)\to \hat{\huaS}[(\g[1])^*]\hat{\otimes}  \huaS(\g)$ by the following formula:
\begin{eqnarray}\label{homotopy}
	s(P\otimes Q)=\left\{
	\begin{array}{ll}
		\frac{\Delta(P\otimes Q)}{m+n+\deg(P)-\deg(Q)}, &\mbox {$P\otimes Q\notin \bk\{x^1\ldots x^n\otimes \varepsilon_1\ldots\varepsilon_m\}$, }\\
		\Delta(P\otimes Q), &\mbox {$P\otimes Q\in \bk\{x^1\ldots x^n\otimes \varepsilon_1\ldots\varepsilon_m\}$.}
	\end{array}
	\right.
\end{eqnarray}

\begin{prop}
In the above notation, $\Big(\hat{\huaS}[(\g[1])^*]\hat{\otimes}  \huaS(\g),d,s,t\Big)$ is an abstract Hodge decomposition.
\end{prop}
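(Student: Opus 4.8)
The plan is to verify the four axioms of Definition \ref{Hodge} for the quadruple $\big(\hat{\huaS}[(\g[1])^*]\hat{\otimes}\huaS(\g),d,s,t\big)$ directly, using the commutator identity \eqref{Berezinian} as the one nontrivial input. Write $V=\hat{\huaS}[(\g[1])^*]\hat{\otimes}\huaS(\g)$, let $B\subset V$ denote the one-dimensional Berezinian line $\bk\{x^1\ldots x^n\otimes\varepsilon_1\ldots\varepsilon_m\}$, and for a homogeneous $P\otimes Q$ set $N(P\otimes Q):=m+n+\deg(P)-\deg(Q)$, so that $[\Delta,d]=N\cdot\id$ on each eigen-component and $N>0$ exactly off $B$, $N=0$ exactly on $B$. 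Note also the obvious degree bookkeeping: $d$ raises $\deg(P)$ by $1$ (it multiplies by a generator $e_i$ or $\varepsilon_i$) and lowers $\deg(Q)$ by $1$, hence raises $N$ by $2$; dually $\Delta$ lowers $\deg(P)$ by $1$ and raises $\deg(Q)$ by $1$, hence lowers $N$ by $2$. Both $d$ and $\Delta$ preserve the (finite-dimensional) eigenspaces-by-$N$ grading only after one step, so it is cleanest to decompose $V=\bigoplus_\lambda V_\lambda$ where $V_\lambda$ is spanned by monomials with $N=\lambda$; then $d:V_\lambda\to V_{\lambda+2}$, $\Delta:V_\lambda\to V_{\lambda-2}$, $t$ is projection onto $V_0=B$, and $s$ acts on $V_\lambda$ as $\tfrac1\lambda\Delta$ for $\lambda\neq 0$ and as $\Delta$ (which is zero!) on $V_0$.

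First I would record that $\Delta|_B=0$: the Laplacian applied to $x^1\ldots x^n\otimes\varepsilon_1\ldots\varepsilon_m$ kills it because each $\tfrac{\partial}{\partial e_i}$ annihilates the constant $\huaS(\g)$-part and each $\tfrac{\partial}{\partial y^i}$ annihilates the constant $(\g[1])^*$-part — so in fact $\Delta$ vanishes identically on $B$, and consequently $s|_B=0$ too, making the two cases in \eqref{homotopy} consistent. Next, axiom (4): $s^2=0$ because $s$ maps $V_\lambda\to V_{\lambda-2}$ up to scalar, and applying $\Delta$ twice to a monomial lands in $\deg(P)-2,\deg(Q)+2$, but one checks $\Delta^2=0$ as a differential operator (it is a sum of pairwise (super)commuting square-zero pieces $\tfrac{\partial}{\partial e_i}\tfrac{\partial}{\partial x^i}$), hence $s^2=0$; $st=ts=0$ is immediate since $t$ projects onto $V_0$ and $s$ kills $V_0$ while landing in $\bigoplus_{\lambda\geq 2}V_\lambda$ elsewhere (one must check $\Delta$ never produces a $V_0$-component from $V_\lambda$, $\lambda\geq 2$, which holds because $N\geq 2$ forces $N-2\geq 0$ with equality only from $\lambda=2$, and there $\Delta$ lands in $\deg(P)=0,\deg(Q)=m+n$, i.e. in $B$ — wait: this means $s$ *can* hit $B$, so I must instead argue $ts=0$ differently). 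Let me redo that point: $ts=0$ requires the $B$-component of $s(P\otimes Q)$ to vanish for $P\otimes Q\notin B$; equivalently the $B$-component of $\Delta(P\otimes Q)$ vanishes. But if $P\otimes Q\in V_2$ then $\Delta(P\otimes Q)\in V_0=B$ a priori; however $\Delta(P\otimes Q)$ has $\deg(P)=0$ forced, so $P$ was linear, $P=\sum c_i e_i + \sum c'_j y^j$ — but $y^j$ is odd so $\deg$ there counts; in any case $\tfrac{\partial}{\partial e_i}$ lowers the $e$-degree, and a degree-$(0+?)$ result in $B$ needs $Q=x^1\cdots x^n\varepsilon_1\cdots\varepsilon_m$, which forces the original $Q$ to have been of degree $m+n-1$, fine — so $\Delta(P\otimes Q)$ genuinely can be a nonzero multiple of the Berezinian, and then $ts(P\otimes Q)=s(P\otimes Q)\neq 0$. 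Hence axiom (4) as stated forces us to have defined $s$ on $V_2$ to *not* be $\tfrac12\Delta$ but rather to project away the Berezinian — re-reading \eqref{homotopy}, it is $\tfrac{\Delta}{N}$ with $N=2$ there, so this is a genuine issue to be handled: one shows that on $V_2$ the operator $\Delta$ actually has no Berezinian component by a direct small computation (the relevant partial derivatives of a weight-$2$ monomial, when the result would be the top form, must vanish by a parity/support argument), which is the one hands-on check I'd actually carry out.

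With axiom (4) settled, axioms (2) and (3) are formal: $t$ is a projection so $t^2=t$; and $dt=td=0$ since $t$ lands in $B$, $d(B)\subset V_2$ but $d$ of the top form $x^1\cdots x^n\otimes\varepsilon_1\cdots\varepsilon_m$ is zero because multiplying the already-top exterior monomial $Q$ by any $e_i$ or $\varepsilon_i$ — no wait, $d$ multiplies the $\huaS(\g)$-part by $e_i$ (raising $\deg P$) and leaves $Q$; so $d(B)$ is spanned by $e_i\otimes$(top form) and $\varepsilon_i\otimes$(top form), which need not vanish. So $dt\neq 0$ in general and axiom (2) $dt=td$ must be read as: $td=0$ on all of $V$ (since $d$ raises $N$, it never lands in $V_0$, so composing with the projection $t$ gives zero) and likewise $dt$ — hmm, $dt$ is $d$ applied after projecting to $B$, generically nonzero. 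This tells me the correct reading is that axiom (2) should read $dt=td$ meaning *both are the same operator*, not both zero; since $td=0$ always (as $d$ never outputs a $V_0$-term), we need $dt=0$ as well, i.e. $d$ must annihilate the Berezinian. So the real content of axiom (2) is: $d(x^1\cdots x^n\otimes\varepsilon_1\cdots\varepsilon_m)=0$ in $V$, which holds precisely because — here I'd invoke that the Berezinian is a $d$-cocycle, equivalently it is the chosen cohomology generator, equivalently $N=0$ and the general theory of \eqref{Berezinian} forces $d$ on the kernel of $N$ to vanish after a Hodge argument; cleaner: $d$ of the top form lands in $V_2$, and $\Delta d(\text{top})= [\Delta,d](\text{top}) + d\Delta(\text{top}) = 0 + 0 = 0$ since $N(\text{top})=0$ and $\Delta(\text{top})=0$, so $d(\text{top})\in\ker\Delta\cap V_2$; then since on $V_2$ we have $ds\,|\text{this element}$ ... this threatens circularity, so instead I would just verify $d(\text{top})=0$ by the direct computation that $e_i\cdot x^1\cdots x^n\otimes\varepsilon_1\cdots\varepsilon_m$ cancels against itself or is absent — actually the honest statement is that $d$ does *not* annihilate the top form, and what is true is $[\Delta,d]=N$, so the operators $s,t$ are only required to satisfy (1)–(4) and (1) is the master identity.

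Finally, axiom (1), $ds+sd=\id-t$: I would check it on each $V_\lambda$. For $\lambda\neq 0$: on $V_\lambda$, $sd$ involves $s$ acting on $V_{\lambda+2}$ as $\tfrac{1}{\lambda+2}\Delta$, and $ds$ involves $d$ after $s=\tfrac1\lambda\Delta$ on $V_\lambda$; so $(ds+sd)|_{V_\lambda}=\tfrac1\lambda d\Delta + \tfrac{1}{\lambda+2}\Delta d$. Rewriting $\Delta d = d\Delta + [\Delta,d] = d\Delta + (\lambda+2)\id$ on $V_\lambda$ (the eigenvalue jumps to $\lambda+2$ after applying $d$? — one must be careful which eigenspace), the hope is the $d\Delta$ terms telescope to leave $\id$. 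This is the computational core: it is essentially a rescaled version of the standard "Hodge theory from a Laplacian with spectral gap" argument, and the bookkeeping of which $N$-eigenvalue one divides by is where sign/index errors lurk — so \textbf{the main obstacle} is getting the scalar coefficients in axiom (1) to cancel correctly across the two-step shift caused by $d$ and $\Delta$ each changing $N$ by $\pm 2$, together with the clean treatment of the edge case $\lambda=2$ (where $s$ could a priori produce a Berezinian component that would spoil both (1) and (4)). Once those scalars are pinned down — using only \eqref{Berezinian}, $\Delta^2=0$, $d^2=0$, and $\Delta|_B=0$ — axioms (2)–(4) fall out as the degenerate $\lambda=0$ instances and the proposition follows.
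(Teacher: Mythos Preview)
Your proposal contains a fundamental bookkeeping error that derails the entire argument. You assert that $d$ raises $N$ by $2$ and $\Delta$ lowers $N$ by $2$, but in fact \emph{both $d$ and $\Delta$ preserve $N$}. The source of the mistake is a misreading of the $(P,Q)$ decomposition: $P$ is not ``the $\huaS(\g)$-factor'' but rather the \emph{polynomial} part in the even generators $e_1,\dots,e_n,y^1,\dots,y^m$ (drawn from both tensor factors), while $Q$ is the \emph{exterior} part in the odd generators $x^1,\dots,x^n,\varepsilon_1,\dots,\varepsilon_m$. Thus each summand $x^iA\otimes e_iX$ of $d$ contributes one even factor $e_i$ to $P$ and one odd factor $x^i$ to $Q$, raising $\deg(P)$ and $\deg(Q)$ each by $1$; similarly for $y^iA\otimes\varepsilon_iX$. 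Hence $N=m+n+\deg(P)-\deg(Q)$ is unchanged by $d$, and the same goes for $\Delta$.

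Once this is corrected, every difficulty you wrestle with evaporates. Since $d,\Delta,s,t$ all preserve each $V_\lambda$, axiom~(1) on $V_\lambda$ with $\lambda\neq 0$ reads
\[
ds+sd=\tfrac{1}{\lambda}(d\Delta+\Delta d)=\tfrac{1}{\lambda}[\Delta,d]=\tfrac{1}{\lambda}\cdot\lambda\cdot\id=\id=\id-t,
\]
with no telescoping across eigenspaces and no ``edge case $\lambda=2$''. On $V_0=B$ one checks directly that $d|_B=0$ (each summand of $d$ multiplies by an odd generator already present in $x^1\cdots x^n\varepsilon_1\cdots\varepsilon_m$) and $\Delta|_B=0$ (each summand differentiates an even generator absent from $B$), so $s|_B=0$ and $(ds+sd)|_B=0=(\id-t)|_B$. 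The identities $dt=td=0$, $ts=st=0$ are then immediate from $d|_B=0$, $s|_B=0$, and the fact that $s$ preserves each $V_\lambda$ (so never lands in $B$ from outside). Your worry that $s$ could map $V_2$ into $B$, and that $d(B)$ might be nonzero, were artifacts of the wrong grading picture. The paper's proof is accordingly brief: it invokes \eqref{Berezinian}--\eqref{homotopy} for axiom~(1) and says ``direct computation'' for the rest, which is exactly the short argument above.
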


\begin{proof}
By \cite[Proposition 4.1]{Covolo}, we have $d^2=0$. By \eqref{Berezinian}-\eqref{homotopy} and the definition of $t$, we obtain $ds+sd=\id-t$. Since $t$ is a projection map, it follows that $t^2=t$. Moreover, by
direct computation, we deduce that
\begin{eqnarray*}
d t=t d=s^2=st=ts=0.
\end{eqnarray*}
Thus, we prove that $\Big(\hat{\huaS}[(\g[1])^*]\hat{\otimes}  \huaS(\g),d,s,t\Big)$ is an abstract Hodge decomposition.
\end{proof}

We define $x:\hat{\huaS}[(\g[1])^*]\hat{\otimes}  \huaS(\g)\to \hat{\huaS}[(\g[1])^*]\hat{\otimes} \huaS(\g)$ as following:
\begin{eqnarray}\label{differential-x}
&&x(A\otimes X)=d_{\CE}(A)\otimes X+\frac{1}{2}\sum_{i=1}^{n}x^iA\otimes [e_i,  X]+\frac{1}{2}\sum_{i=1}^{m}(-1)^{|A|}y^iA\otimes [\varepsilon_i,  X]\\
\nonumber&&+\sum_{i=1}^{n}\sum_{s=1}^{p-1}x^iA\otimes\pr_s\Big(\Phi^{-1}\big(e_i\Phi(X)\big)\Big)+\sum_{i=1}^{m}\sum_{s=1}^{p-1}(-1)^{|A|}y^iA\otimes\pr_s\Big(\Phi^{-1}\big(\varepsilon_i\Phi(X)\big)\Big),
\end{eqnarray}
here $X\in \huaS^p(\g)$. By \eqref{Gutt}, we obtain that $\partial=d+x$. It means that  $x$ is a perturbation of the differential  $d$.

\begin{lem}
With the notation as above, $\id+sx$ and $\id+xs$ are invertible maps.
\end{lem}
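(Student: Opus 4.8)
The plan is to prove that $sx$ and $xs$ are \emph{locally nilpotent}; invertibility of $\id+sx$ and $\id+xs$ then follows formally, the inverses being the Neumann series $\sum_{k\ge 0}(-sx)^k$ and $\sum_{k\ge 0}(-xs)^k$.

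First I would introduce two gradings on $\hat{\huaS}[(\g[1])^*]\hat{\otimes}\huaS(\g)$. The \emph{CE grading} assigns to a monomial $A\otimes X$ the polynomial degree of $A$ in the variables $x^i,y^i$ dual to $(\g[1])^*$ (this is the grading already used in the proof of Theorem \ref{the:1-dim}). The \emph{PBW grading} assigns to $A\otimes X$ the polynomial degree $\deg_{\huaS}(X)$ of $X\in\huaS(\g)$. The PBW grading is nonnegative. It matters that, although the whole (completed) space is \emph{not} exhausted by its subspaces of bounded PBW degree, each CE-homogeneous component $\huaS^n[(\g[1])^*]\otimes\huaS(\g)$ — being uncompleted in the $\huaS(\g)$-factor — is so exhausted; hence every individual element of such a component has bounded PBW degree.

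The heart of the argument is a bookkeeping check of how $x$ and $s$ move these gradings. Using the expansion $u*X=uX+\tfrac12[u,X]+\sum_{s\le p-1}\pr_s(\cdots)$ from \eqref{Gutt} for $u\in\g$ and $X\in\huaS^p(\g)$, together with the fact that the bracket of $\g$ extends to a polynomial-degree-preserving derivation of $\huaS(\g)$, one reads off from the defining formula for $x$ that every term of $x(A\otimes X)$ has $\huaS(\g)$-component of polynomial degree $\le\deg_{\huaS}(X)$; thus $x$ does not raise the PBW grading, and since each term of $x(A\otimes X)$ has the form $x^iA\otimes(\cdots)$, $y^iA\otimes(\cdots)$ or $d_{\CE}(A)\otimes X$, the operator $x$ raises the CE grading by exactly $1$. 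Dually, $s$ is, up to a nonzero scalar on each bihomogeneous component, the Laplacian $\Delta=\sum_i\frac{\partial}{\partial e_i}\frac{\partial}{\partial x^i}+\sum_i\frac{\partial}{\partial y^i}\frac{\partial}{\partial \varepsilon_i}$, which is first order in the $\huaS(\g)$-variables $e_i,\varepsilon_i$ and in the $(\g[1])^*$-variables $x^i,y^i$; hence $s$ strictly lowers the PBW grading and lowers the CE grading by exactly $1$. Combining, $sx$ and $xs$ preserve the CE grading and strictly lower the PBW grading.

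It follows that on each CE-homogeneous component, an element of maximal PBW degree $M$ is killed by $(sx)^{M+1}$ and by $(xs)^{M+1}$, so $sx$ and $xs$ are locally nilpotent and the Neumann series are locally finite; applied componentwise over the CE grading (which $sx$ and $xs$ respect), they assemble into operators on all of $\hat{\huaS}[(\g[1])^*]\hat{\otimes}\huaS(\g)$ that are two-sided inverses to $\id+sx$ and $\id+xs$. The one point demanding care — and the only real obstacle — is precisely this last assembly: because the space is completed in the $(\g[1])^*$-direction, local nilpotence by itself does not produce a globally defined inverse, and one genuinely needs the auxiliary CE grading, preserved by $sx$ and $xs$, to patch the componentwise inverses together into a bona fide operator.
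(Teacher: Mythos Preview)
Your proof is correct and follows essentially the same strategy as the paper's: show that $sx$ and $xs$ are locally nilpotent via the PBW (polynomial) degree on the $\huaS(\g)$-factor, then invoke the Neumann series $\sum_{k\ge0}(-sx)^k$ and $\sum_{k\ge0}(-xs)^k$. The paper simply asserts $(sx)^{p+1}(A\otimes X)=(xs)^{p+1}(A\otimes X)=0$ for $X\in\huaS^p(\g)$ and concludes.

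Your treatment is somewhat more careful on one point the paper leaves implicit: because the space is completed in the $(\g[1])^*$-direction, a general element need not be a finite sum of simple tensors $A\otimes X$ with bounded $p$, so local nilpotence in the naive sense is not immediate. Your observation that $sx$ and $xs$ preserve the CE grading, that each CE-homogeneous piece $\huaS^n[(\g[1])^*]\otimes\huaS(\g)$ is uncompleted in the second factor, and that the completed space is the product over CE degree, is exactly what makes the Neumann series assemble into a globally defined operator. This is a genuine clarification, though the paper's authors would presumably regard it as routine.
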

\begin{proof}
For any $A\in \hat{\huaS}[(\g[1])^*], X\in\huaS^p(\g)$, we have
$$
(sx)^{p+1}(A\otimes X)=(xs)^{p+1}(A\otimes X)=0.
$$
It follows that $sx$ and $xs$ are locally nilpotent maps. Thus, we deduce that $\id+sx$ and $\id+xs$ are invertible and
\begin{eqnarray*}
(\id+sx)^{-1}=\sum_{k=0}^{+\infty}(-sx)^k,~~~~~(\id+xs)^{-1}=\sum_{k=0}^{+\infty}(-xs)^k.
\end{eqnarray*}
\end{proof}
We define the operators $\alpha$ and $\beta$ on $\hat{\huaS}[(\g[1])^*]\hat{\otimes}  \huaS(\g)$  by
\begin{equation}\label{eq:alpha}
\alpha=(\id+sx)^{-1} \quad \text{and} \quad \beta=(\id+xs)^{-1}
\end{equation}
\begin{prop}
$\Big(\hat{\huaS}[(\g[1])^*]\hat{\otimes}  \huaS(\g),\partial=d+x,\alpha s,\alpha t\beta\Big)$ is an abstract Hodge decomposition and
\begin{eqnarray}\label{iso-HPL}
\alpha t :\big(t\big\{\hat{\huaS}[(\g[1])^*]\hat{\otimes}  \huaS(\g)\big\},0\big)\to \big((\alpha t\beta)\big\{\hat{\huaS}[(\g[1])^*]\hat{\otimes}  \huaS(\g)\big\},d+x\big)
\end{eqnarray}
is an isomorphism of dg vector spaces.
\end{prop}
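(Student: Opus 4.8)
The plan is to recognise the statement as an instance of the homological perturbation lemma (HPL), applied to the abstract Hodge decomposition $(\hat{\huaS}[(\g[1])^*]\hat{\otimes}\huaS(\g),d,s,t)$ with the perturbation $x$, and to match its output with the stated formulae. First I would assemble the algebraic preliminaries. The operator $\partial=d+x$ is the CE differential transported along $\id\otimes\Phi$, so $\partial^2=0$ and $x$ is a genuine perturbation of $d$; since $\deg s=-1$ and $\deg x=+1$, the operators $sx$, $xs$, and hence $\alpha=(\id+sx)^{-1}$ and $\beta=(\id+xs)^{-1}$, all have degree $0$, and $\alpha,\beta$ are defined precisely because $sx,xs$ are locally nilpotent, by the preceding lemma. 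Expanding the geometric series and using only the annihilation relations $s^2=st=ts=0$, one reads off the commutation rules $s\alpha=s$, $\beta s=s$, $t\alpha=t$, $\beta t=t$; from the evident identities $(\id+sx)s=s(\id+xs)$ and $x(\id+sx)=(\id+xs)x$ one gets $\alpha s=s\beta$ and $x\alpha=\beta x$. Finally, by the previous proposition, $(\hat{\huaS}[(\g[1])^*]\hat{\otimes}\huaS(\g),d,s,t)$ is a strong deformation retract of $\hat{\huaS}[(\g[1])^*]\hat{\otimes}\huaS(\g)$ onto the one-dimensional subcomplex $\bk\{x^1\cdots x^n\otimes\varepsilon_1\cdots\varepsilon_m\}$ (with idempotent $t$ and contracting homotopy $s$), satisfying all the annihilation conditions of Definition~\ref{Hodge}.

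I would then invoke the classical HPL in the form that carries annihilation conditions to annihilation conditions (see \cite{CL} and the references therein): perturbing $d$ to $\partial=d+x$ produces a strong deformation retract with contracting homotopy $\alpha s=s\beta$, idempotent $\alpha t\beta$, comparison maps $\alpha t$ and $t\beta$, and a perturbed differential of the form $tx\alpha$ on the small complex. The one thing to observe is that this small complex $\bk\{x^1\cdots x^n\otimes\varepsilon_1\cdots\varepsilon_m\}$ is one-dimensional, hence concentrated in a single degree, so its only degree-$(+1)$ endomorphism — in particular $tx\alpha$ — vanishes. Hence the perturbed retract is exactly the claimed abstract Hodge decomposition $(\hat{\huaS}[(\g[1])^*]\hat{\otimes}\huaS(\g),\partial=d+x,\alpha s,\alpha t\beta)$. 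For the isomorphism: on the Berezinian line $t$ acts as the identity, so $\alpha t$ restricted there equals $\alpha$, which is injective, with image $\alpha\big(t\{\hat{\huaS}[(\g[1])^*]\hat{\otimes}\huaS(\g)\}\big)=(\alpha t\beta)\{\hat{\huaS}[(\g[1])^*]\hat{\otimes}\huaS(\g)\}$ since $\beta$ is invertible; and from $(\alpha t\beta)^2=\alpha t\beta$ one gets $\alpha t=(\alpha t\beta)\alpha t$, so for $v$ in the Berezinian line $\partial(\alpha t v)$ lies in $(\alpha t\beta)\{\hat{\huaS}[(\g[1])^*]\hat{\otimes}\huaS(\g)\}$, a one-dimensional space sitting in a fixed degree, while having degree one higher than $v$ — hence $\partial(\alpha t v)=0$. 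Thus $\alpha t$ is a chain map out of $\big(t\{\hat{\huaS}[(\g[1])^*]\hat{\otimes}\huaS(\g)\},0\big)$ and, being injective onto $(\alpha t\beta)\{\hat{\huaS}[(\g[1])^*]\hat{\otimes}\huaS(\g)\}$, is the asserted isomorphism of dg vector spaces.

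Should a self-contained argument be preferred, the axioms of Definition~\ref{Hodge} for $(\partial,\alpha s,\alpha t\beta)$ can be checked by hand. The annihilation and idempotency axioms are short manipulations with the commutation rules: $(\alpha s)^2=\alpha(s\alpha)s=\alpha s^2=0$; $(\alpha s)(\alpha t\beta)=\alpha(s\alpha)t\beta=\alpha(st)\beta=0$; $(\alpha t\beta)(\alpha s)=\alpha t(\beta\alpha s)=\alpha t(s\beta)=\alpha(ts)\beta=0$ using $\beta\alpha s=s\beta$; and $(\alpha t\beta)^2=\alpha(t\beta\alpha t)\beta=\alpha t\beta$, where $t\beta\alpha t=t$ because $t\beta=t-tx\alpha s$ (from $\beta=\id-xs\beta$ and $s\beta=\alpha s$), $t\alpha=t$, and $s\alpha t=st=0$. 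The one substantive identity is the homotopy equation $\partial(\alpha s)+(\alpha s)\partial=\id-\alpha t\beta$, proved by conjugating $ds+sd=\id-t$ and reabsorbing the resulting error terms by means of $\partial^2=0$ and the commutation rules; this is the usual computation-heavy core of the HPL. The remaining axiom $\partial(\alpha t\beta)=(\alpha t\beta)\partial$ then follows formally, by composing the homotopy equation with $\partial$ on either side and using $\partial^2=0$. I do not expect a genuine obstacle in this proposition itself: the one delicate input — convergence of $\alpha$ and $\beta$ — has already been secured by the preceding lemma, and the vanishing of the perturbed small differential is forced by one-dimensionality; what remains is careful bookkeeping about which side $\alpha$ and $\beta$ act on. The real difficulty of the section lies afterwards, in identifying the perturbed idempotent $\alpha t\beta$ with the deformed Berezinian and reading off from the induced $\U(\g)$-action the (super)trace of the adjoint representation.
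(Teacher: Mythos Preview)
Your proposal is correct and takes essentially the same approach as the paper: the paper's proof is a one-line citation of the Homological Perturbation Lemma \cite[Corollary 3.7]{CL}, and you have simply unpacked that reference, supplying the commutation identities and the degree argument for the vanishing of the perturbed small differential that the citation encapsulates. The additional self-contained verification you sketch is not needed for the paper's purposes but is a faithful expansion of what \cite{CL} provides.
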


\begin{proof}
It follows immediately from the Homological Perturbation Lemma \cite[Corollary 3.7]{CL}.
\end{proof}

By $t\big\{\hat{\huaS}[(\g[1])^*]\hat{\otimes}  \huaS(\g)\big\}=\bk\{x^1\ldots x^n\otimes \varepsilon_1\ldots\varepsilon_m\}$ and \eqref{iso-HPL}, we deduce that
$$
(\alpha t\beta)\big\{\hat{\huaS}[(\g[1])^*]\hat{\otimes}  \huaS(\g)\big\}=\bk\{\alpha(x^1\ldots x^n\otimes \varepsilon_1\ldots\varepsilon_m)\}.
$$

Since $\Big(\hat{\huaS}[(\g[1])^*]\hat{\otimes}  \huaS(\g),\partial=d+x,\alpha s,\alpha t\beta\Big)$ is an abstract Hodge decomposition, we have the following result.
\begin{prop}
There are two morphisms of dg vector spaces
\begin{eqnarray}\label{deformation-retract}
i: \big(\bk\{\alpha(x^1\ldots x^n\otimes \varepsilon_1\ldots\varepsilon_m)\},0\big)\rightleftharpoons \big(\hat{\huaS}[(\g[1])^*]\hat{\otimes} \huaS(\g),\partial=d+x\big): \alpha t\beta
\end{eqnarray}
such that $\alpha t\beta i=\id$ and $\id-i\alpha t\beta=(d+x)\alpha s+\alpha s(d+x)$. Here $i$ is the natural embedding map.
\end{prop}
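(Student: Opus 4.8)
The plan is to deduce the statement formally, as the standard \emph{deformation retract / contraction} attached to an abstract Hodge decomposition, using the two facts already established just above: that $\big(\hat{\huaS}[(\g[1])^*]\hat{\otimes}\huaS(\g),\partial=d+x,\alpha s,\alpha t\beta\big)$ is an abstract Hodge decomposition, and that its image operator satisfies $(\alpha t\beta)\big\{\hat{\huaS}[(\g[1])^*]\hat{\otimes}\huaS(\g)\big\}=\bk\{\alpha(x^1\ldots x^n\otimes\varepsilon_1\ldots\varepsilon_m)\}$. Write $V:=\hat{\huaS}[(\g[1])^*]\hat{\otimes}\huaS(\g)$, $\sigma:=\alpha s$, $\tau:=\alpha t\beta$, and let $i\colon \tau V\hookrightarrow V$ be the inclusion (so that, composed through $\tau V$, the endomorphism $\alpha t\beta$ of $V$ factors as $V\twoheadrightarrow\tau V\xhookrightarrow{\,i\,}V$). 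The four axioms of the Hodge decomposition for $(V,\partial,\sigma,\tau)$ will give, in turn, exactly the four assertions to be proved.

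\textbf{Key steps.} First I would note that $\partial$ vanishes on $\tau V$: the isomorphism \eqref{iso-HPL} is an isomorphism of \emph{dg} vector spaces whose source $\big(t\{V\},0\big)$ carries the zero differential, so the target $\big(\tau V,d+x\big)$ does too; equivalently, $(d+x)\alpha(x^1\ldots x^n\otimes\varepsilon_1\ldots\varepsilon_m)=0$, i.e.\ $\tau V\subseteq\ker(d+x)$. Hence $\partial i=0$, so $i$ is a chain map $\big(\tau V,0\big)\to(V,\partial)$; and since $\tau v\in\tau V\subseteq\ker\partial$ for every $v$, we get $\partial\tau=0$, whence by axiom (2) also $\tau\partial=\partial\tau=0$, so $\alpha t\beta$ is a chain map $(V,\partial)\to\big(\tau V,0\big)$. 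Next, axiom (3) gives $\tau^2=\tau$, so $\tau$ restricts to the identity on its image, i.e.\ $\alpha t\beta\circ i=\id$, the first claimed identity. Finally, since $i\circ(\alpha t\beta)$ is just the endomorphism $\tau=\alpha t\beta$ of $V$, axiom (1) reads $\id-i\circ(\alpha t\beta)=\id-\tau=\partial\sigma+\sigma\partial=(d+x)\alpha s+\alpha s(d+x)$, the second claimed identity. Substituting the already-computed value of $\tau V$ identifies $i$ with the natural embedding of $\bk\{\alpha(x^1\ldots x^n\otimes\varepsilon_1\ldots\varepsilon_m)\}$, as stated.

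\textbf{Main obstacle.} Honestly, there is no real obstacle at this point: the proposition is a purely formal unwinding of the abstract Hodge decomposition established in the preceding proposition (where the genuine work — the homological perturbation argument — was done), and the single point that needs a moment's attention is observing that the differential on $\tau V$ is zero (which is precisely the content of \eqref{iso-HPL}), so that $i$ and $\alpha t\beta$ are genuinely morphisms of dg vector spaces and the two displayed contraction identities make sense.
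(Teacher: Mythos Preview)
Your proposal is correct and follows exactly the route the paper intends: the proposition is stated in the paper as an immediate consequence of the preceding abstract Hodge decomposition (obtained via the Homological Perturbation Lemma), and you have simply spelled out that unwinding in full. The only point requiring care, that $\partial$ vanishes on $\tau V$ so that $i$ and $\alpha t\beta$ are honest chain maps, you correctly extract from the dg isomorphism \eqref{iso-HPL}.
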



Since the one-dimensional vector space $\big(\bk\{\alpha(x^1\ldots x^n\otimes \varepsilon_1\ldots\varepsilon_m)\}\big)$ (with the zero differential) is a deformation retract of  $\big(\hat{\huaS}[(\g[1])^*]\hat{\otimes} \huaS(\g),\partial=d+x\big)$, one can transfer the right $A_\infty$-module structure over $\U(\g)$ from $\hat{\huaS}[(\g[1])^*]\hat{\otimes}  \huaS(\g)$ to $\bk\{\alpha(x^1\ldots x^n\otimes \varepsilon_1\ldots\varepsilon_m)\}$. The higher $A_\infty$-action maps are calculated in terms of summations over rooted planar trees, cf. \cite[Proposition 3.18]{CL}. Note that the trees are in our situation binary since $\U(\g)$ is a graded algebra rather than an $A_\infty$-algebra. For the same reason, the dg vector space
\[
\CE^\bullet(\g,\U(\g))\cong \big(\hat{\huaS}[(\g[1])^*]\hat{\otimes} \huaS(\g),\partial=d+x\big)
\]
has a bigrading -- the total grading and the CE-grading. The operator  $x$  increases  the CE-degree by 1 and $s$ reduces the CE-degree by $1$. Since the trees involved in the formulas for the higher $A_\infty$-action have edges decorated by the operator {$s$}, and so, they all reduce the CE-degree. Since the $A_\infty$-module $\big(\bk\{\alpha(x^1\ldots x^n\otimes \varepsilon_1\ldots\varepsilon_m)\}\big)$ is one-dimensional (and so, is concentrated in a single CE-degree), all higher $A_\infty$-action maps on it vanish. Thus, that there is only a right $\U(\g)$-module structure on $\big(\bk\{\alpha(x^1\ldots x^n\otimes \varepsilon_1\ldots\varepsilon_m)\},0\big)$ and the higher $A_\infty$-structure maps are zero.

Let us denote the element  $x^1\ldots x^n\otimes \varepsilon_1\ldots\varepsilon_m$ by $\huaB$ (for `Berezinian'). We have also the \text{\emph{deformed Berezinian}}:
\begin{eqnarray}\label{deformed-Berezinian}
\,\,\,\, \widetilde{\huaB}:=\alpha(x^1\ldots x^n\otimes \varepsilon_1\ldots\varepsilon_m).
\end{eqnarray}

The right dg  $\U(\g)$-module structure on $\big(\bk\{\alpha(x^1\ldots x^n\otimes \varepsilon_1\ldots\varepsilon_m)\},0\big)$ is given by
\begin{eqnarray}\label{dg-U-mod}
\widetilde{\huaB}.u=(\alpha t\beta)(\widetilde{\huaB}_*u),~~u\in \g.
\end{eqnarray}
Since $\dim\bk\{\alpha(x^1\ldots x^n\otimes \varepsilon_1\ldots\varepsilon_m)\}=1$, we deduce that $\widetilde{\huaB}.u=r \widetilde{\huaB},~r\in\bk$. Thus, we have
\begin{eqnarray}
(\alpha t\beta)(\widetilde{\huaB}_*u)=r \widetilde{\huaB},~u\in\g.
\end{eqnarray}

We introduce the \emph{weight grading} on $\hat{\huaS}[(\g[1])^*]\hat{\otimes}\huaS(\g)$ by giving an element in $(\g[1])^*\subset \hat{\huaS}[(\g[1])^*]$ weight $1$ and an element in $\g\in \huaS(\g)$ weight $-1$. Note that $x$ increases the weight by 1 or more, and $s$ preserves the weight.

\begin{lem}
The lowest weight term in the expression for $(\alpha t\beta)(\widetilde{\huaB}_*u)$ is
$$
t\Big\{\frac{1}{2}x^1\ldots x^n\otimes [\varepsilon_1\ldots\varepsilon_m,u]-\big((sx)(x^1\ldots x^n\otimes \varepsilon_1\ldots\varepsilon_m)\big)u-(xs)(x^1\ldots x^n\otimes \varepsilon_1\ldots\varepsilon_m u)\Big\}.
$$
Moreover, its weight is $n-m$.
\end{lem}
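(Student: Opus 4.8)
The plan is to work with the \emph{weight grading} that the whole perturbation set-up carries and to track only the lowest-weight contributions through the various expansions. On $V:=\hat{\huaS}[(\g[1])^*]\hat{\otimes}\huaS(\g)$ put $(\g[1])^*$ in weight $1$ and $\g\subset\huaS(\g)$ in weight $-1$, so that $\hat{\huaS}^a[(\g[1])^*]\hat{\otimes}\huaS^b(\g)$ has weight $a-b$; this is the grading refining the weight filtration used in the proof of Theorem \ref{the:1-dim}, and $\huaB=x^1\ldots x^n\otimes\varepsilon_1\ldots\varepsilon_m$ has weight $n-m$. Inspecting the defining formulae: $d$, $\Delta$, $s$ and $t$ all preserve weight, while the perturbation $x$ strictly raises it — by exactly $1$ on the summand $d_{\CE}(A)\otimes X$ and on the bracket summands, and by at least $2$ on the remaining $\pr_s$-summands. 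Hence $sx$ and $xs$ strictly raise weight, so $\alpha=\sum_k(-sx)^k$ and $\beta=\sum_k(-xs)^k$ each equal $\id$ plus a weight-raising operator; in particular $\widetilde{\huaB}=\alpha\huaB=\huaB+(\text{terms of weight}>n-m)$, the first correction being $-\,sx\huaB$ in weight $\ge n-m+1$.

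Next I make the reduction. Since $t$ has image $\bk\huaB$, we have $t(\beta(\widetilde{\huaB}_*u))=c\,\huaB$ for a scalar $c$, whence $(\alpha t\beta)(\widetilde{\huaB}_*u)=\alpha(c\,\huaB)=c\,\widetilde{\huaB}$. As $\widetilde{\huaB}$ has no component of weight below $n-m$, the lowest-weight term of $c\,\widetilde{\huaB}$ is exactly $c\,\huaB=t(\beta(\widetilde{\huaB}_*u))$. So it suffices to identify $t(\beta(\widetilde{\huaB}_*u))$, and only the weight-$(n-m)$ component of $\beta(\widetilde{\huaB}_*u)$ is relevant.

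That component is extracted by direct bookkeeping. Using the Gutt-product expansion \eqref{Gutt} in the form $X*u=Xu+\tfrac12[X,u]+(\text{terms of }\huaS\text{-degree}<\deg X)$, and the fact that $X\mapsto X*u$ lowers weight by at most $1$ while $\widetilde{\huaB}$ has weight $\ge n-m$, one gets that $\widetilde{\huaB}_*u$ has weight $\ge n-m-1$; its weight-$(n-m-1)$ component is $x^1\ldots x^n\otimes\varepsilon_1\ldots\varepsilon_m u$ (produced solely from the leading term $\huaB$ of $\widetilde{\huaB}$ via $X\mapsto Xu$), and its weight-$(n-m)$ component is $\tfrac12 x^1\ldots x^n\otimes[\varepsilon_1\ldots\varepsilon_m,u]-\big[(sx)\huaB\big]u$ modulo weight $>n-m$ (the first term from $\huaB$ via $X\mapsto\tfrac12[X,u]$, the second from the correction $-\,sx\huaB$ via $X\mapsto Xu$). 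Applying $\beta=\id-xs+(xs)^2-\cdots$ and keeping weight $n-m$: the $\id$-term contributes this weight-$(n-m)$ component, the $(-xs)$-term contributes $-xs$ of the weight-$(n-m-1)$ component, and $(xs)^k$ for $k\ge2$ overshoots weight $n-m$ since $\widetilde{\huaB}_*u$ has no part of weight below $n-m-1$. Summing and applying $t$ — which annihilates everything of weight $\neq n-m$, so that all intermediate weight-truncations may be dropped inside it — yields precisely
\[
t\Big(\tfrac12 x^1\ldots x^n\otimes[\varepsilon_1\ldots\varepsilon_m,u]-\big[(sx)(x^1\ldots x^n\otimes\varepsilon_1\ldots\varepsilon_m)\big]u-(xs)(x^1\ldots x^n\otimes\varepsilon_1\ldots\varepsilon_m u)\Big).
\]

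The step I expect to be the main obstacle is this last one: one must verify carefully that no further terms intervene, i.e. that $\widetilde{\huaB}$ (resp. $\widetilde{\huaB}_*u$) has no component of weight below $n-m$ (resp. $n-m-1$), that $X\mapsto X*u$ drops weight by at most one, and that all higher iterates of $sx$ and $xs$ overshoot — these facts are exactly what the explicit shape of the perturbation $x$ and the Gutt-product expansion \eqref{Gutt} deliver, while the signs are simply read off the geometric-series expansions of $\alpha$ and $\beta$.
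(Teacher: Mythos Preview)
Your proof is correct and follows essentially the same approach as the paper's: both track the weight grading through the geometric-series expansions of $\alpha,\beta$ and the Gutt-product expansion of $*u$, retaining only the lowest-weight contributions and using that $t$ kills everything off weight $n-m$. Your upfront reduction from $\alpha t\beta$ to $t\beta$ (via $t\beta(\widetilde{\huaB}_*u)\in\bk\huaB$, so $\alpha$ just rebuilds $\widetilde{\huaB}$) is a slight organizational improvement over the paper, which carries $\alpha$ along and discards the $sxt$-term at the end, but the substance is the same.
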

\begin{proof}
Since $t$ is the operator of projection onto the one-dimensional vector space $\bk\{x^1\ldots x^n\otimes \varepsilon_1\ldots\varepsilon_m\}$ and $sx, xs$ increase the weight by greater than or equal to 1, we deduce that the weight of the lowest weight term is $n-m$. By ignoring the  higher weight terms of $(\alpha t\beta)(\widetilde{\huaB}_*u)$, one can deduce that
{\small{
\begin{eqnarray*}
&&(\alpha t\beta)(\widetilde{\huaB}_*u)\\
&\stackrel{\eqref{eq:alpha}}{=}&(t\beta)\Big(\big((\id-sx+\ldots)(x^1\ldots x^n\otimes \varepsilon_1\ldots\varepsilon_m)\big)_*u\Big)\\
&=&(t\beta)\Big((x^1\ldots x^n\otimes \varepsilon_1\ldots\varepsilon_m)_*u-\big((sx)(x^1\ldots x^n\otimes \varepsilon_1\ldots\varepsilon_m)\big)_*u\Big)\\
&=&(t\beta)\Big(x^1\ldots x^n\otimes \varepsilon_1\ldots\varepsilon_m u+\frac{1}{2}x^1\ldots x^n\otimes [\varepsilon_1\ldots\varepsilon_m,u]-\big((sx)(x^1\ldots x^n\otimes \varepsilon_1\ldots\varepsilon_m)\big)u\Big)\\
&=&t\Big\{\frac{1}{2}x^1\ldots x^n\otimes [\varepsilon_1\ldots\varepsilon_m,u]-\big((sx)(x^1\ldots x^n\otimes \varepsilon_1\ldots\varepsilon_m)\big)u-(xs)(x^1\ldots x^n\otimes \varepsilon_1\ldots\varepsilon_m u)\Big\}.
\end{eqnarray*}
}}
 The second equality holds as that the weight of $\big((-sx)^k(x^1\ldots x^n\otimes \varepsilon_1\ldots\varepsilon_m)\big)_*u,k\ge 2$ is greater or equal to $n-m+1$. The third equality follows from the fact that the first term of Equation \eqref{Gutt-representation} reduces the weight by 1, that the second term of Equation \eqref{Gutt-representation} preserves the weight, and the last term of Equation \eqref{Gutt-representation} increases the weight by at least 1. The last  equality holds since $xs$ increases the weight by at least 1.
\end{proof}

On the other hand, since $sx$ increase the weight by greater than or equal to 1, thus the lowest weight term in $r \widetilde{\huaB}$ is $r (x^1\ldots x^n\otimes \varepsilon_1\ldots\varepsilon_m)$. Therefore, the constant $r$ is computed from the following equation:
\begin{eqnarray*}
&&t\Big\{\frac{1}{2}x^1\ldots x^n\otimes [\varepsilon_1\ldots\varepsilon_m,u]-\big((sx)(x^1\ldots x^n\otimes \varepsilon_1\ldots\varepsilon_m)\big)u-(xs)(x^1\ldots x^n\otimes \varepsilon_1\ldots\varepsilon_m u)\Big\}\\
&&=r (x^1\ldots x^n\otimes \varepsilon_1\ldots\varepsilon_m).
\end{eqnarray*}
Since the number $r$ is related to the notion of supertrace, we recall the definition of the latter.  Let $V$ be a finite-dimensional graded vector space; then the endomorphism algebra $\End(V)$ has a trace (or supertrace) map $\str:\End(V)\to\bk$ defined as the composition of the isomorphism $\End(V)\to V^*\otimes V$ and the evaluation map $V^*\otimes V\to \bk$. This is a standard generalization of the ordinary ungraded trace of a linear operator; if $\{e_i\}, i=1,\ldots, n$ is a basis of the even graded component of $V$, $\{e_k\}, k=n+1,\ldots n+m$ is a basis of its odd graded component and $(a_{ij})$ is the matrix of $f$ in the basis $\{e_k, k=1,\ldots n+m\}$, then
\begin{eqnarray}\label{super-tr}
\str(f)=\sum_{i=1}^na_{ii}-\sum_{i=n+1}^{n+m}a_{ii}.
\end{eqnarray}

Without loss of generality, we can set $u=e_1$ and $u=\varepsilon_1$ respectively.

\begin{lem}\label{even-map}
For $u=e_1$, we have
{\small{
\begin{eqnarray*}
&t\Big\{\frac{1}{2}x^1\ldots x^n\otimes [\varepsilon_1\ldots\varepsilon_m,u]-\big((sx)(x^1\ldots x^n\otimes \varepsilon_1\ldots\varepsilon_m)\big)u-(xs)(x^1\ldots x^n\otimes \varepsilon_1\ldots\varepsilon_m u)\Big\}\\
&=\str(\ad_{e_1})(x^1\ldots x^n\otimes \varepsilon_1\ldots\varepsilon_m).
\end{eqnarray*}
}}
\end{lem}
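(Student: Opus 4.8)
The plan is to evaluate the three summands inside $t(-)$ one at a time and add the results. The organizing principle is that $t$ is the projection onto the line $\bk\huaB=\bk\{x^1\cdots x^n\otimes\varepsilon_1\cdots\varepsilon_m\}$, so writing a monomial of $\hat{\huaS}[(\g[1])^*]\hat{\otimes}\huaS(\g)$ as $P\otimes Q$ with $P$ its polynomial part (the $e$'s and $y$'s) and $Q$ its exterior part (the $x$'s and $\varepsilon$'s) as in \eqref{homotopy}, such a monomial is killed by $t$ unless $P=1$ and $Q=x^1\cdots x^n\varepsilon_1\cdots\varepsilon_m$. I will also use that $\ad_{e_1}$ is block-diagonal with even block $(a^l_{1k})$ and odd block $(b^l_{1j})$, so $\str(\ad_{e_1})=\sum_{k}a^k_{1k}-\sum_{j}b^j_{1j}$, together with graded antisymmetry $a^k_{pq}=-a^k_{qp}$ (hence $a^k_{11}=0$) and $[\varepsilon_j,e_1]=-[e_1,\varepsilon_j]=-\sum_k b^k_{1j}\varepsilon_k$. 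For the first summand: $[-,e_1]$ is an even derivation of the Poisson algebra $\huaS(\g)$, so expanding on $\varepsilon_1\cdots\varepsilon_m$ and dropping terms with a repeated $\varepsilon_k$ gives $[\varepsilon_1\cdots\varepsilon_m,e_1]=-(\sum_j b^j_{1j})\varepsilon_1\cdots\varepsilon_m$, whence $t(\tfrac12 x^1\cdots x^n\otimes[\varepsilon_1\cdots\varepsilon_m,e_1])=-\tfrac12(\sum_j b^j_{1j})\huaB$.

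For the second summand I first expand $x(\huaB)$. Since $x^i(x^1\cdots x^n)=0$ for all $i$, only $d_\CE(x^1\cdots x^n)\otimes\varepsilon_1\cdots\varepsilon_m$, the term $\tfrac12(-1)^n\sum_i y^i(x^1\cdots x^n)\otimes[\varepsilon_i,\varepsilon_1\cdots\varepsilon_m]$, and the lower-degree terms $\sum_{i,s}(-1)^n y^i(x^1\cdots x^n)\otimes\pr_s(\Phi^{-1}(\varepsilon_i\Phi(\varepsilon_1\cdots\varepsilon_m)))$ survive, and by \eqref{DCE-diff} the first of these collapses to its $c$-part, which carries two $y$'s. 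Hence every monomial of $x(\huaB)$ has at least one $y$ in its polynomial part. Applying $s$, which is a scalar multiple of $\Delta$, removes at most one pair $(y^l,\varepsilon_l)$ or $(x^l,e_l)$; running through the cases, each resulting monomial either still carries a $y$, or else carries an $e$ in the $\huaS(\g)$-slot which the subsequent right multiplication by $e_1$ (the plain product in $\huaS(\g)$, i.e. the lowest-weight part of $\varepsilon_1\cdots\varepsilon_m*e_1$) cannot remove. In every case $t$ kills the outcome, so the second summand is $0$.

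For the third summand, the weight denominator is $m+n+\deg P-\deg Q=m+n+1-(m+n)=1$, so $s(x^1\cdots x^n\otimes\varepsilon_1\cdots\varepsilon_m e_1)=\Delta(x^1\cdots x^n\otimes e_1\varepsilon_1\cdots\varepsilon_m)$, and a direct evaluation of $\Delta$ leaves exactly $\sigma:=x^2\cdots x^n\otimes\varepsilon_1\cdots\varepsilon_m$. Now I apply $x$ to $\sigma$ and keep only what survives $t$. Since $x^i(x^2\cdots x^n)=0$ unless $i=1$, and since any monomial with a $y$ in the first factor or with $\huaS(\g)$-degree $<m$ dies under $t$, the only contributions come from $d_\CE(x^2\cdots x^n)\otimes\varepsilon_1\cdots\varepsilon_m$ and from $\tfrac12 x^1x^2\cdots x^n\otimes[e_1,\varepsilon_1\cdots\varepsilon_m]$. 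In the former only the $a$-part of $d_\CE(x^2\cdots x^n)$ can rebuild $x^1\cdots x^n$, and $x^px^q\partial_{x^k}(x^2\cdots x^n)$ contributes precisely when $\{p,q\}=\{1,k\}$ with $k\in\{2,\dots,n\}$; adding the two orderings of $(p,q)$ and using $a^k_{k1}=-a^k_{1k}$ gives $-\sum_{k}a^k_{1k}\,x^1\cdots x^n$. In the latter, $[e_1,\varepsilon_1\cdots\varepsilon_m]=(\sum_j b^j_{1j})\varepsilon_1\cdots\varepsilon_m$. Thus $t(x(\sigma))=(-\sum_k a^k_{1k}+\tfrac12\sum_j b^j_{1j})\huaB$, and after the minus sign in front of the third summand this contributes $(\sum_k a^k_{1k}-\tfrac12\sum_j b^j_{1j})\huaB$. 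Summing the three contributions, $-\tfrac12\sum_j b^j_{1j}+0+\sum_k a^k_{1k}-\tfrac12\sum_j b^j_{1j}=\sum_k a^k_{1k}-\sum_j b^j_{1j}=\str(\ad_{e_1})$, which is the assertion. The main obstacle is the sign bookkeeping — pinning down the signs in $d_\CE(x^2\cdots x^n)$ and in the two $\Delta$-computations (an overall sign error in $\sigma$ would spoil the cancellation); everything else is the routine, if tedious, elimination of the monomials annihilated by $t$.
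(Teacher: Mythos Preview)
Your proof is correct and follows essentially the same route as the paper's: both compute $s(\huaB e_1)=x^2\cdots x^n\otimes\varepsilon_1\cdots\varepsilon_m$, extract the $a$-contribution from $d_{\CE}(x^2\cdots x^n)$ and the $b$-contribution from $[e_1,\varepsilon_1\cdots\varepsilon_m]$, and observe that the middle term $[(sx)(\huaB)]e_1$ dies under $t$. The only differences are organizational --- the paper merges your first summand with the $\tfrac12[e_1,\varepsilon_1\cdots\varepsilon_m]$ piece of the third at once to get $-[e_1,\varepsilon_1\cdots\varepsilon_m]$, and it disposes of the second summand silently (indeed, since plain multiplication by $e_1$ forces a nontrivial $e$-factor into the $\huaS(\g)$-slot, $t$ kills it immediately; your monomial-by-monomial tracking of $y$'s and $e$'s through $s$ is correct but more work than needed).
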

\begin{proof}
Since $t$ is the operator of projection onto the one-dimensional vector space $\bk\{x^1\ldots x^n\otimes \varepsilon_1\ldots\varepsilon_m\}$, one can deduce that
{\small{
\begin{eqnarray*}
&&t\Big\{\frac{1}{2}x^1\ldots x^n\otimes [\varepsilon_1\ldots\varepsilon_m,e_1]-\big((sx)(x^1\ldots x^n\otimes \varepsilon_1\ldots\varepsilon_m)\big)e_1-(xs)(x^1\ldots x^n\otimes \varepsilon_1\ldots\varepsilon_m e_1)\Big\}\\
&=&t\Big(\frac{1}{2}x^1\ldots x^n\otimes [\varepsilon_1\ldots\varepsilon_m,e_1]-x(x^2\ldots x^n\otimes \varepsilon_1\ldots\varepsilon_m)\Big)\\
&=&t\Big(\frac{1}{2}x^1\ldots x^n\otimes [\varepsilon_1\ldots\varepsilon_m,e_1]-\sum_{k=2}^{n}(-1)^{k-2}x^2\ldots d_{\CE}(x^k)\ldots x^n\otimes \varepsilon_1\ldots\varepsilon_m\\
&&-\frac{1}{2}\sum_{i=1}^{n}x^ix^2\ldots x^n\otimes [e_i,\varepsilon_1\ldots\varepsilon_m]-\frac{1}{2}\sum_{i=1}^{m}(-1)^{n-1}y^ix^2\ldots x^n\otimes [\varepsilon_i,\varepsilon_1\ldots\varepsilon_m]\Big)\\
&=&t\Big(\frac{1}{2}x^1\ldots x^n\otimes [\varepsilon_1\ldots\varepsilon_m,e_1]-\sum_{k=2}^{n}(-1)^{k-2}x^2\ldots d_{\CE}(x^k)\ldots x^n\otimes \varepsilon_1\ldots\varepsilon_m\\
&&-\frac{1}{2}x^1x^2\ldots x^n\otimes [e_1,\varepsilon_1\ldots\varepsilon_m]\Big)\\
&\stackrel{\eqref{DCE-diff}}{=}&t\big(-x^1\ldots x^n\otimes [e_1,\varepsilon_1\ldots\varepsilon_m]\big)\\
&&-t\Big(\sum_{k=2}^{n}(-1)^{k-2}x^2\ldots x^{k-1}\big(\sum_{1\le p,q\le n} -\frac{1}{2}a_{pq}^k x^p x^q+\sum_{1\le p,q\le m} -\frac{1}{2}c_{pq}^k y^p y^q\big)x^{k+1}\ldots x^n\otimes \varepsilon_1\ldots\varepsilon_m\Big)\\
&=&t\Big(-\sum_{k=1}^{m}x^1\ldots x^n\otimes \varepsilon_1\ldots [e_1,\varepsilon_k]\ldots\varepsilon_m\Big)\\
&&-t\Big(\sum_{k=2}^{n}(-1)^{k-2}x^2\ldots x^{k-1}\big(-\frac{1}{2}a_{1 k}^k x^1 x^k-\frac{1}{2}a_{k1}^k x^k x^1\big)x^{k+1}\ldots x^n\otimes \varepsilon_1\ldots\varepsilon_m\Big)\\
&=&t\Big(-\sum_{k=1}^{m}x^1\ldots x^n\otimes \varepsilon_1\ldots \varepsilon_{k-1}(b_{1k}^k\varepsilon_k)\varepsilon_{k+1}\ldots\varepsilon_m\Big)\\
&&-t\Big(\sum_{k=2}^{n}x^1x^2\ldots x^{k-1}(-a_{1 k}^kx^k)x^{k+1}\ldots x^n\otimes \varepsilon_1\ldots\varepsilon_m\Big)\\
&=&\Big(\sum_{k=1}^{n}a_{1k}^k-\sum_{k=1}^{m}b_{1k}^k\Big)(x^1\ldots x^n\otimes \varepsilon_1\ldots\varepsilon_m)\\
&\stackrel{\eqref{super-tr}}{=}&\str(\ad_{e_1})(x^1\ldots x^n\otimes \varepsilon_1\ldots\varepsilon_m).
\end{eqnarray*}
}}
The first equality follows from the identity $t\Big(\big((sx)(x^1\ldots x^n\otimes \varepsilon_1\ldots\varepsilon_m)\big)e_1\Big)=0$ and the definition of $s$. The second equation follows since the operator $x$ of Equation \eqref{differential-x} increases the weight by at least 1.  The third  equality follows from the identities $x^ix^i=0$ and $t(y^jx^2\ldots x^n\otimes [\varepsilon_j,\varepsilon_1\ldots\varepsilon_m])=0,j=1,\cdots,m$. The fourth  equality is obtained by the concrete formula of the CE differential \eqref{DCE-diff}. The fifth equality holds since $x^ix^i=0$ and $t(x^2\ldots x^{k-1}y^p y^qx^{k+1}\ldots x^n\otimes \varepsilon_1\ldots\varepsilon_m)=0$. The last three equalities are obtained by direct computation.
\end{proof}

On the other hand, we have the following result.
\begin{lem}\label{odd-map}
For $u=\varepsilon_1$, we have
{\small{
\begin{eqnarray*}
t\Big\{\frac{1}{2}x^1\ldots x^n\otimes [\varepsilon_1\ldots\varepsilon_m,\varepsilon_1]-&\big((sx)(x^1\ldots x^n\otimes \varepsilon_1\ldots\varepsilon_m)\big)\varepsilon_1-(xs)(x^1\ldots x^n\otimes \varepsilon_1\ldots\varepsilon_m \varepsilon_1)\Big\}\\=&\str(\ad_{\varepsilon_1})(x^1\ldots x^n\otimes \varepsilon_1\ldots\varepsilon_m)\\=&0.
\end{eqnarray*}
}}
\end{lem}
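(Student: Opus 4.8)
The plan is to run the computation of Lemma~\ref{even-map} verbatim, replacing the even generator $e_1$ by the odd generator $\varepsilon_1$, and to exploit the fact that for an odd argument almost everything collapses. Since $\bk$ has characteristic zero, $\varepsilon_1^2=0$ in the graded-symmetric algebra $\huaS(\g)$, so $\varepsilon_1\ldots\varepsilon_m\cdot\varepsilon_1=0$ and the third summand $(xs)(x^1\ldots x^n\otimes\varepsilon_1\ldots\varepsilon_m\varepsilon_1)$ vanishes outright; the problem reduces to evaluating $t\big(\frac12 x^1\ldots x^n\otimes[\varepsilon_1\ldots\varepsilon_m,\varepsilon_1]-[(sx)(x^1\ldots x^n\otimes\varepsilon_1\ldots\varepsilon_m)]\varepsilon_1\big)$. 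For the first of these two terms the Leibniz rule gives $[\varepsilon_1\ldots\varepsilon_m,\varepsilon_1]=\sum_k\pm\,\varepsilon_1\ldots[\varepsilon_k,\varepsilon_1]\ldots\varepsilon_m$ with $[\varepsilon_k,\varepsilon_1]=\sum_j c_{k1}^j e_j$ an \emph{even} element of $\g$; thus every monomial appearing has $\deg(P)\ge 1$ in the notation preceding \eqref{Berezinian}, while $\huaB=x^1\ldots x^n\otimes\varepsilon_1\ldots\varepsilon_m$ has $\deg(P)=0$, so the projection $t$ onto $\bk\huaB$ kills it.

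It then remains to see that $t([(sx)(\huaB)]\varepsilon_1)=0$. I would expand $x(\huaB)$ using the definition of $x$ and \eqref{DCE-diff}: the summands $x^ix^1\ldots x^n\otimes(-)$ vanish because $x^1,\ldots,x^n$ are odd and $x^ix^1\ldots x^n=0$ in the exterior algebra they generate (the same reason annihilates the $a^k_{pq}$-coefficients inside $d_{\CE}(x^1\ldots x^n)$), so every surviving monomial of $x(\huaB)$ carries a factor from $\{y^1,\ldots,y^m\}$ or an even generator $e_j$. Applying $s$ (a multiple of $\Delta$, by \eqref{homotopy}), then right-multiplying by $\varepsilon_1$ in the $\huaS(\g)$-slot, and then applying $t$, a short monomial-by-monomial check confirms that each resulting term still contains a factor from $\{y^\bullet,e_\bullet\}$, or is missing some $x^k$, or has the wrong polynomial degree in $\huaS(\g)$ — hence has vanishing $\huaB$-component, since $\huaB$ consists precisely of $x^1,\ldots,x^n$ and $\varepsilon_1,\ldots,\varepsilon_m$ and nothing else. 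This gives $t([(sx)(\huaB)]\varepsilon_1)=0$, so the left-hand side is $0$. Finally $\str(\ad_{\varepsilon_1})=0$: for odd $\varepsilon_1$ the operator $\ad_{\varepsilon_1}$ interchanges the even and odd parts of $\g$, so its matrix is block off-diagonal and its supertrace vanishes, which yields the claimed identities $0=\str(\ad_{\varepsilon_1})\huaB$.

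It is worth recording the conceptual reason both sides vanish: the right $\U(\g)$-module structure on the one-dimensional space $\bk\widetilde{\huaB}$, concentrated in the single internal degree $|\widetilde{\huaB}|$, is given by a degree-zero algebra homomorphism $\U(\g)\to\bk$ (equivalently a degree-zero Lie $1$-cocycle on $\g$), and such a map necessarily annihilates every homogeneous element of $\g$ of nonzero degree, in particular the odd element $\varepsilon_1$. The main obstacle in carrying out the explicit argument above is the sign bookkeeping in the expansion of $x(\huaB)$ and in the definition of $s$ — the Koszul signs differ from Lemma~\ref{even-map} because $\varepsilon_1$ is now odd — together with the routine verification that no surviving monomial accidentally becomes proportional to $\huaB$; but, unlike the even case, there is no nontrivial numerical contribution to disentangle, since the diagonal blocks of $\ad_{\varepsilon_1}$ are empty.
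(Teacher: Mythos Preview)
Your proof is correct and follows essentially the same route as the paper: both expand the three summands using the definitions of $s$, $x$, and $d_{\CE}$, use $\varepsilon_1^2=0$ to kill the $(xs)$-term, observe that $x^ix^1\cdots x^n=0$ and that the $a_{pq}^k$-contributions in $d_{\CE}(x^1\cdots x^n)$ vanish for the same reason, and then check that every surviving monomial (carrying a stray $y^\bullet$ or $e_\bullet$, or missing an $x^k$ or $\varepsilon_j$) lies outside $\bk\huaB$ and is annihilated by $t$. Your closing degree-theoretic remark---that the induced character $\g\to\bk$ has degree zero and hence kills odd elements---is a nice conceptual confirmation not made explicit in the paper.
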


\begin{proof}The operator $\ad_{\varepsilon_1}$ is odd and so, it has vanishing (super)trace. Next,
by the definitions of $s$ and $x$, we have
{\small{
\begin{eqnarray*}
&&t\Big\{\frac{1}{2}x^1\ldots x^n\otimes [\varepsilon_1\ldots\varepsilon_m,\varepsilon_1]-\big((sx)(x^1\ldots x^n\otimes \varepsilon_1\ldots\varepsilon_m)\big)\varepsilon_1-(xs)(x^1\ldots x^n\otimes \varepsilon_1\ldots\varepsilon_m \varepsilon_1)\Big\}\\
&=&t\Big\{\sum_{k=1}^{m}\frac{(-1)^{m-k}}{2}x^1\ldots x^n\otimes \varepsilon_1\ldots [\varepsilon_k,\varepsilon_1]\ldots\varepsilon_m\Big\}-t\Big\{s\Big(d_{\CE}(x^1\ldots x^n)\otimes \varepsilon_1\ldots\varepsilon_m\\
&&+\frac{1}{2}\sum_{i=1}^{n}x^ix^1\ldots x^n\otimes [e_i,\varepsilon_1\ldots\varepsilon_m]+\frac{1}{2}\sum_{i=1}^{m}(-1)^{n}y^ix^1\ldots x^n\otimes [\varepsilon_i,\varepsilon_1\ldots\varepsilon_m]\Big)\varepsilon_1\Big\}\\
&=&-t\Big\{s\Big(\sum_{k=1}^{n}(-1)^{k-1}x^1\ldots d_{\CE}(x^k)\ldots x^n\otimes \varepsilon_1\ldots\varepsilon_m\Big)\varepsilon_1\Big\}\\
&&-t\Big\{s\Big(\frac{1}{2}\sum_{i=1}^{m}\sum_{s=1}^{m}(-1)^{n+s-1}y^ix^1\ldots x^n\otimes \varepsilon_1\ldots[\varepsilon_i,\varepsilon_s]\ldots\varepsilon_m\Big)\varepsilon_1\Big\}\\
&\stackrel{\eqref{DCE-diff}}{=}&-t\Big\{s\Big(\sum_{k=1}^{n}(-1)^{k-1}x^1\ldots x^{k-1}\big(\sum_{1\le p,q\le n} -\frac{1}{2}a_{pq}^k x^p x^q+\sum_{1\le p,q\le m} -\frac{1}{2}c_{pq}^k y^p y^q\big)x^{k+1}\ldots x^n\otimes \varepsilon_1\ldots\varepsilon_m\Big)\varepsilon_1\Big\}\\
&&-t\Big\{s\Big(\frac{1}{2}\sum_{i=1}^{m}\sum_{s=1}^{m}(-1)^{n+s-1}y^ix^1\ldots x^n\otimes \varepsilon_1\ldots\varepsilon_{s-1}\big(\sum_{l=1}^{n}c_{is}^{l}e_l\big)\varepsilon_{s+1}\ldots\varepsilon_m\Big)\varepsilon_1\Big\}\\
&=&0.
\end{eqnarray*}
}}
The first equality follows since the operator $x$ in Equation \eqref{differential-x} increases the weight by at least 1,  $s$ preserves the weight and $\varepsilon_1\varepsilon_1=0$.
The second equality holds since $t(x^1\ldots x^n\otimes \varepsilon_1\ldots [\varepsilon_k,\varepsilon_1]\ldots\varepsilon_m)=0$ and $x^ix^i=0$.
 The third equation is obtained from the concrete formula of the CE differential \eqref{DCE-diff} and the structure constants \eqref{structural constant-1} of the graded Lie algebra $\g$. The last equality is  obtained by the direct computation via the definitions of $s$ and $t$.
\end{proof}

Combining  Lemma \ref{even-map} and Lemma \ref{odd-map}, we have the following theorem.
\begin{theorem}\label{thm:super-g-trace}
The right dg  $\U(\g)$-module  $\CE^\bullet(\g,\U(\g))$ is quasi-isomorphic to the  one dimensional right  dg $\U(\g)$-module $\bk[-|\g|]$ (with vanishing differential). The right $\U(\g)$-module structure on $\bk[-|\g|]$  is given by the supertrace
\begin{eqnarray}\label{dg-U-mod-2}
	k.u=\str(\ad_{u})k,~~u\in \g,k\in\bk.
\end{eqnarray}
If $x^1,\ldots, x^n$ is the odd part of a basis of $(\g[1])^*$ (corresponding to the even part of the corresponding basis of $\g$) and $\varepsilon_1,\ldots, \varepsilon_m$ is the odd part of a basis of $\g$, a cocycle representing the only (up to a scalar multiple) nontrivial cohomology class of $\CE^\bullet(\g,\U(\g))$ can be chosen to be the deformed Berezinian
\[
\widetilde{\huaB}:=\alpha(x^1\ldots x^n\otimes \varepsilon_1\ldots\varepsilon_m)
\]
where $\alpha$ is defined in Equation (\ref{eq:alpha}).
\end{theorem}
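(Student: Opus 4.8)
The plan is to assemble the results established above. Recall from Theorem~\ref{the:1-dim} that $\CE^\bullet(\g,\U(\g))$ has one-dimensional cohomology, concentrated in total degree $|\g|$. Using the PBW isomorphism $\id\otimes\Phi$ I identify $\CE^\bullet(\g,\U(\g))$ with the complex $\big(\hat{\huaS}[(\g[1])^*]\hat{\otimes}\huaS(\g),\partial=d+x\big)$ carrying the transferred right $\U(\g)$-action, and the Homological Perturbation Lemma in the form \eqref{deformation-retract} exhibits this complex as a deformation retract onto the one-dimensional complex $\big(\bk\{\widetilde{\huaB}\},0\big)$, where $\widetilde{\huaB}=\alpha(x^1\ldots x^n\otimes\varepsilon_1\ldots\varepsilon_m)$. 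Since the embedding $i$ in \eqref{deformation-retract} is then a quasi-isomorphism out of a complex with zero differential, $\widetilde{\huaB}$ is a cocycle generating the cohomology of $\CE^\bullet(\g,\U(\g))$, which establishes the last assertion of the theorem. The structure transferred to $\bk\{\widetilde{\huaB}\}$ is a priori that of a right $A_\infty$-module over $\U(\g)$; but, as discussed above, the CE-bigrading on $\CE^\bullet(\g,\U(\g))$ (available precisely because $\g$ has vanishing differential, so that $\U(\g)$ is an ordinary graded algebra) together with the fact that every tree in the transfer formula contributes an edge decorated by $s$, which strictly lowers CE-degree, forces all higher operations on the CE-homogeneous line $\bk\{\widetilde{\huaB}\}$ to vanish. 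Hence $\bk\{\widetilde{\huaB}\}$ is an honest right dg $\U(\g)$-module with zero differential, quasi-isomorphic as a right dg $\U(\g)$-module to $\CE^\bullet(\g,\U(\g))$; being one-dimensional and sitting in cohomological degree $|\g|$, it is $\bk[-|\g|]$ as a graded vector space.

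It remains to compute the right $\U(\g)$-action. By \eqref{dg-U-mod} it is $\widetilde{\huaB}.u=(\alpha t\beta)(\widetilde{\huaB}_* u)$, hence equals $r_u\widetilde{\huaB}$ for a unique scalar $r_u\in\bk$. Comparing lowest-weight terms of both sides (the right-hand side has lowest-weight term $r_u\,\huaB$), the scalar $r_u$ is the coefficient of $\huaB$ in the lowest-weight term of $(\alpha t\beta)(\widetilde{\huaB}_* u)$, which is exactly the expression computed in the lemma stated just before Lemma~\ref{even-map}. Both $u\mapsto r_u$ and $u\mapsto\str(\ad_u)$ are $\bk$-linear, so it suffices to check $r_u=\str(\ad_u)$ when $u$ runs through a homogeneous basis of $\g$; relabelling the basis so that the chosen even (resp.\ odd) element becomes $e_1$ (resp.\ $\varepsilon_1$), these are precisely Lemma~\ref{even-map} and Lemma~\ref{odd-map}. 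Therefore $r_u=\str(\ad_u)$ for all $u\in\g$, so the right $\U(\g)$-module structure on $\bk[-|\g|]$ is \eqref{dg-U-mod-2}, and the theorem follows.

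The one point requiring care in this assembly is the reduction to lowest-weight terms: one must confirm that the CE-degree --- equivalently, the filtration induced by the PBW filtration on $\U(\g)$ --- is preserved by $t$, strictly lowered by $s$, and raised by $x$ exactly as claimed, so that both the transfer of the module structure and the action formula \eqref{dg-U-mod} truncate to their lowest-weight parts; this is what makes the finite computations of Lemmas~\ref{even-map} and~\ref{odd-map} sufficient to determine $r_u$ for every $u\in\g$. Everything else is bookkeeping already carried out in the preceding lemmas.
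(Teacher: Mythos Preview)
Your proof is correct and follows essentially the same approach as the paper, which simply states that the theorem follows by combining Lemma~\ref{even-map} and Lemma~\ref{odd-map}; you have spelled out the assembly of the preceding propositions and lemmas in appropriate detail. One small caveat: in your final paragraph you conflate two distinct gradings that the paper uses for two different purposes---the CE-degree (with respect to which $s$ strictly lowers and $x$ raises, used to kill higher $A_\infty$-operations) and the weight of Theorem~\ref{the:1-dim} (with respect to which $s$ \emph{preserves} and $x$ raises, used in the lowest-weight comparison to extract $r_u$); these are not equivalent, and neither coincides with the PBW filtration alone, but since the actual computations are carried out in the cited lemmas this imprecision does not affect the validity of your argument.
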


Recall from \cite{BL} that a graded Lie algebra $\g$ is \emph{unimodular} if for any $u\in\g$ the operator $\ad_u$ has vanishing supertrace. We have the following corollary.
\begin{cor}
	Let $\g$ be a unimodular graded Lie algebra. Then its universal enveloping algebra $\U(\g)$ is a graded $|\g|$-Gorenstein algebra. The pair $\big(\Perf(\U(\g)), \operatorname{pvd}(\U(\g))\big)$ is an open $|\g|$-CY structure on the category $D(\U(\g))$.
\end{cor}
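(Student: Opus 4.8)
The plan is to deduce this directly from Theorem \ref{thm:super-g-trace} together with the Hopf-algebraic Calabi-Yau criterion of Corollary \ref{cor:HopfCY}; essentially all of the work has already been done and what remains is bookkeeping. First I would record that $\U(\g)$ carries its standard cocommutative dg Hopf algebra structure, with the elements of $\g$ primitive and antipode induced by $u\mapsto -u$, so that the results of Section \ref{section:Hopf} apply to $A=\U(\g)$. We are in the finite-dimensional setting of Section \ref{sec:technical}, so $\g$ is in particular proper and both Theorem \ref{the:1-dim} and Theorem \ref{thm:super-g-trace} are available.

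Next I would invoke Theorem \ref{thm:super-g-trace}, which identifies the dualizing complex $\bd=\RHom_{\U(\g)}(\bk,\U(\g))\cong\CE^\bullet(\g,\U(\g))$ with the one-dimensional right dg $\U(\g)$-module $\bk[-|\g|]$ on which $u\in\g$ acts as multiplication by the scalar $\str(\ad_u)$. This is precisely the step where unimodularity is used: by definition $\str(\ad_u)=0$ for all $u\in\g$, so the right action on $\bd$ is trivial, i.e. $\bd\simeq\bk[-|\g|]$ as the one-dimensional trivial module. Unwinding the definition of Gorenstein given just before Corollary \ref{cor:HopfCY}, this says exactly that $d(\bk)[|\g|]\simeq\bk$, so $\U(\g)$ is a (graded) $|\g|$-Gorenstein algebra.

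Finally, with $\U(\g)$ established as an $|\g|$-Gorenstein dg Hopf algebra, the second assertion follows formally by applying Corollary \ref{cor:HopfCY} with $n=|\g|$: the pair $(\Perf(\U(\g)),\operatorname{pvd}(\U(\g)))$ is an $|\g|$-Calabi-Yau structure on $D(\U(\g))$. I do not expect any genuine obstacle here; the one point requiring slight care is that the Gorenstein property concerns self-duality of the \emph{trivial} module $\bk$, so one must genuinely invoke the vanishing of the supertrace cocycle $\chi(\g)$ --- the defining feature of unimodularity --- and not merely the one-dimensionality of $\bd$ supplied by Theorem \ref{the:1-dim}.
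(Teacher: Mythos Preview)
Your proposal is correct and follows exactly the paper's own route: the paper's proof is the single line ``This follows immediately from Theorem \ref{thm:super-g-trace} and Corollary \ref{cor:HopfCY},'' and you have simply spelled out the unimodularity step ($\str(\ad_u)=0$ forces $\bd\simeq\bk[-|\g|]$ trivially) that makes the Gorenstein condition hold.
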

\begin{proof}
	This follows immediately from Theorem \ref{thm:super-g-trace} and Theorem \ref{cor:HopfCY}.
\end{proof}
Let us now discuss MC twisting of dg comodules over dg coalgebras (obtained by dualization from MC twisting of the corresponding pc dg modules over pc dg algebras).
\begin{prop}\label{Twisted-theory}
Let $C$ be a dg coalgebra and  $\xi\in C^*$ be an MC element of the dual pc algebra $C^* $. Define $d^\xi:C\to C$ as following:
\begin{eqnarray}\label{twisted-coalgebra}
d^{\xi}(c)=d(c)-\sum_{c}\xi(c_{(1)})c_{(2)}+\sum_{c}(-1)^{|c_{(1)}|}\xi(c_{(2)})c_{(1)}, \forall c\in C.
\end{eqnarray}
Here we use Sweedler's notation of coalgebras: $\Delta(c)=\sum_{c}c_{(1)}\otimes c_{(2)}\in C\otimes C,\forall c\in C$.
Then the  graded coalgebra $C$ equipped with $d^\xi$ is a dg coalgebra, which is denoted by $C^\xi$. Let $V$ be a right dg  $C$-comodule. Define $d^{[\xi]}:V\to V$ as following:
\begin{eqnarray}\label{twisted-comod}
d^{[\xi]}(v)=d(v)+\sum_{v}(-1)^{|v_{(0)}|}\xi(v_{(1)})v_{(0)}, \forall v\in V.
\end{eqnarray}
Here we use Sweedler's notation of comodules: $\Delta(v)=\sum_{v}v_{(0)}\otimes v_{(1)}\in V\otimes C,\forall v\in V$.
Then the  graded comodule $V$ equipped with $d^{[\xi]}$ is a right dg  $C^\xi$-comodule, which is denoted by $V^{[\xi]}$.
\end{prop}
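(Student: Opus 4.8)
The plan is to recognise Proposition~\ref{Twisted-theory} as the continuous-dual incarnation of the standard Maurer--Cartan twist of a pc dg algebra and of its dg modules (as already used in the Notation section, cf.\ \cite[Section~6.1]{BL-0}): twisting the pc dg algebra $C^*$ by $\xi\in\MC(C^*)$ replaces its differential with $d_{C^*}+[\xi,-]$, which is again a derivation and squares to zero because $(d_{C^*}+[\xi,-])^2=d_{C^*}^2+[\,d_{C^*}\xi+\xi^2,\,-]=0$ by the Maurer--Cartan equation, and likewise a dg $C^*$-module twists to a dg module over the twisted algebra. Applying the anti-equivalence $\DGVectpc\simeq\DGVect^{\op}$ then produces exactly $C^\xi$ and $V^{[\xi]}$. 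Concretely one can finish in two equivalent ways: verify that \eqref{twisted-coalgebra} and \eqref{twisted-comod} are the linear duals of $d_{C^*}+[\xi,-]$ and of the twisted action differential on $V^*$ and quote the algebra/module case, or check the coalgebra and comodule axioms directly. I will outline the direct check, since the two reduce to the same sign computation.

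For the coalgebra part, write $d^\xi=d_C-\lambda_\xi+\rho_\xi$ with $\lambda_\xi:=(\xi\otimes\id)\Delta$ and $\rho_\xi:=(\id\otimes\xi)\Delta$, so that unwinding Sweedler's notation reproduces \eqref{twisted-coalgebra}. That $d^\xi$ has degree $1$ and satisfies $\epsilon\,d^\xi=0$ is immediate from the counit axiom. For the coderivation property, since $d_C$ is already a coderivation it suffices that $-\lambda_\xi+\rho_\xi$ is one; by coassociativity the failure of $\lambda_\xi$ (respectively of $\rho_\xi$) to be a coderivation equals, up to the Koszul sign, $(\id\otimes\xi\otimes\id)(\Delta\otimes\id)\Delta$, and these cancel in the combination $-\lambda_\xi+\rho_\xi$ --- this is precisely why both terms, with that relative sign, occur in \eqref{twisted-coalgebra}. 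Finally, for $(d^\xi)^2=0$ one expands $(d_C-\lambda_\xi+\rho_\xi)^2$ and uses $d_C^2=0$ together with identities such as $[d_C,\lambda_\xi]=\lambda_{d_{C^*}\xi}$, $[d_C,\rho_\xi]=\rho_{d_{C^*}\xi}$ (which follow from $d_C$ being a coderivation and $(d_{C^*}\xi)(c)=\xi(d_Cc)$), $\lambda_\xi^2=-\lambda_{\xi^2}$, $\rho_\xi^2=\rho_{\xi^2}$ and $\lambda_\xi\rho_\xi+\rho_\xi\lambda_\xi=0$; these collapse the expression to $-\lambda_{\,d_{C^*}\xi+\xi^2}+\rho_{\,d_{C^*}\xi+\xi^2}$, which vanishes exactly by the Maurer--Cartan equation $d_{C^*}\xi+\xi^2=0$.

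The comodule part is proved by the same pattern, with $\Delta$ replaced by the coaction $\rho_V$. Write $d^{[\xi]}=d_V+\mu_\xi$ with $\mu_\xi:=(\id\otimes\xi)\rho_V$; unwinding gives \eqref{twisted-comod}. The coassociativity and counitality of $\rho_V$ are untouched, so it remains only to see that $(d^{[\xi]})^2=0$ and that $\rho_V$ is a chain map $V^{[\xi]}\to V^{[\xi]}\otimes C^\xi$. For the first, $[d_V,\mu_\xi]=\mu_{d_{C^*}\xi}$ and $\mu_\xi^2=\mu_{\xi^2}$ by the same coassociativity computations (now for the coaction), so $(d^{[\xi]})^2=\mu_{\,d_{C^*}\xi+\xi^2}=0$. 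For the second, after cancelling the untwisted part (which holds because $\rho_V$ is a chain map for $d_V$ and $d_C$), one is reduced to the identity $\rho_V\mu_\xi=(\mu_\xi\otimes\id)\rho_V-(\id\otimes\lambda_\xi)\rho_V+(\id\otimes\rho_\xi)\rho_V$, which follows from the coassociativity $(\rho_V\otimes\id)\rho_V=(\id\otimes\Delta)\rho_V$ in the same way as the coderivation identity did in the coalgebra case.

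The only genuine difficulty throughout is Koszul-sign bookkeeping: the contraction operators $\lambda_\xi,\rho_\xi,\mu_\xi$ carry signs depending on internal degrees, and --- a point that is easy to get wrong --- the sign produced by $\id\otimes(-)$ differs for $\xi$ (degree $1$) and for $\xi^2$ (degree $2$), which is exactly why $\lambda_\xi^2=-\lambda_{\xi^2}$ whereas $\rho_\xi^2=+\rho_{\xi^2}$. Organising the computation through the three ``contraction types'' $\xi(c_{(1)})\xi(c_{(2)})c_{(3)}$, $\xi(c_{(1)})\xi(c_{(3)})c_{(2)}$ and $\xi(c_{(2)})\xi(c_{(3)})c_{(1)}$ keeps it under control; alternatively the dualisation route above offloads all of this onto the already-established module and algebra case, leaving only the matching of formulae to be checked.
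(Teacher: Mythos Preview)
Your proposal is correct. You actually identify both possible routes and then choose the one the paper does \emph{not} take: the paper's proof is precisely your ``dualisation route'', in its entirety. It simply observes that $(C^*)^\xi$, the pc dg algebra with differential $d+[\xi,-]$, dualises to a dg coalgebra with differential \eqref{twisted-coalgebra}, and that the right-twist $(V^*)^{[\xi]}$ of the pc $C^*$-module $V^*$ (with differential $d(f)-(-1)^{|f|}f\cdot\xi$) dualises to a right dg $C^\xi$-comodule with differential \eqref{twisted-comod}. No direct verification of coderivation or comodule axioms is carried out; everything is inherited from the module-side statements already recorded in the Notation section.

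Your direct check via the contraction operators $\lambda_\xi,\rho_\xi,\mu_\xi$ is a genuine alternative. It is longer but self-contained, and it makes transparent \emph{why} both terms in \eqref{twisted-coalgebra} are needed with that relative sign (the individual failures of $\lambda_\xi$ and $\rho_\xi$ to be coderivations cancel), a point the dualisation argument leaves implicit. The paper's route is more economical precisely because it offloads the sign bookkeeping onto the already-established algebra/module twist; your route would be preferable in a context where that background is not assumed.
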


\begin{proof}
Since $\xi$ is a MC element of $C^*$, one can obtain the twisted algebra $(C^*)^{\xi}$ which is the dg algebra with the same underlying graded algebra as $C^*$ and differential
$$
d^{\xi}(\alpha)=d(\alpha)+[\xi,\alpha],~\forall\alpha\in C^*.
$$
Consider the continuous linear duality of $(C^*)^{\xi}$, we deduce that $[(C^*)^{\xi}]^*=C$ is a dg coalgebra with the differential \eqref{twisted-coalgebra}.

Let $V$ be a right dg  $C$-comodule. Then $V^*$ is a right pc  dg  $C^*$-module. Moreover, we have the twisted module $(V^*)^{[\xi]}$ which is the right dg
$(C^*)^{\xi}$-module with the same underlying graded module as $V^*$ and differential
$$
d^{[\xi]}(f)=d(f)-(-1)^{|f|}f.\xi,~\forall f\in V^*.
$$
We deduce that $[(V^*)^{[\xi]}]^*\cong V$, the continuous linear dual to $(V^*)^{[\xi]}$  is a right dg  $C^\xi$-comodule with the differential \eqref{twisted-comod} as claimed.
\end{proof}

\emptycomment{
\begin{theorem}\label{derived-CE-comod}
Let $\g$ be a finite dimensional $\mathbb Z$-graded Lie algebra over the field $\bk$ and $M$ be a finite dimensional $\mathbb Z$-graded $\g$-module. Then the derived linear dual $d(\CE_\bullet(\g, M))$ of the dg comodule $\CE_\bullet(\g,M)$ is weakly equivalent to
the right dg  $\CE_\bullet(\g)$-comodule $\CE_\bullet(\g)[|\g|]$ has $\CE_\bullet(\g)[|\g|]$ as its comodule structure  and the codifferential $\widetilde{d_{\CE}}$ is determined by the formula
\begin{eqnarray*}
\widetilde{d_{\CE}}(u_1[1]\ldots u_k[1])
&=&(-1)^{|\g|}
\Big(d_{\CE}(u_1[1]\ldots u_k[1])\\&+&\sum_{i=1}^{k}(-1)^{|u_i[1]|(|u_1[1]|+\ldots+|u_{k-1}[1]|)}\str(\ad_{u_i})u_1[1]\ldots \widehat{u_i[1]}\ldots u_k[1]\Big).
\end{eqnarray*}
\end{theorem}
}
\begin{rem}
	If $C$ is a cocommutative dg coalgebra (such as $\CE_\bullet(\g)$ for a dg Lie algebra $\g$) then $C^\xi\cong C$ and the categories of left and right dg $C$-comodules are isomorphic.
\end{rem}

\begin{theorem}\label{derived-CE-comod}
Let $\g$ be a finite dimensional $\mathbb Z$-graded Lie algebra over the field $\bk$ and $M$ be a finite dimensional $\mathbb Z$-graded $\g$-module. Then the derived linear dual $d(\CE_\bullet(\g, M))$ of the dg comodule $\CE_\bullet(\g,M)$ is weakly equivalent to
the right dg  $\CE_\bullet(\g)$-comodule $\CE_\bullet\big(\g, (M^*[-|\g|])^{\tw}\big)$.  The underlying graded vector space of  the right $\U(\g)$-module $(M^*[-|\g|])^{\tw}$ is $M^*[-|\g|]$ and the right $\U(\g)$-module structure  is given by the formula
\begin{eqnarray}\label{DG-U-mod}
	\nu[-|\g|]._{\tw}u=(\nu.u)[-|\g|]+\str(\ad_{u})\nu[-|\g|],~~u\in \g,\nu\in M^*.
\end{eqnarray}
Here right action of $\U(\g)$ on $M^*$ corresponds to its left action via the antipode in $\U(\g)$.
\end{theorem}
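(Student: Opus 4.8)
The plan is to deduce the statement by combining two results already established: Corollary~\ref{D-dual-CE}, which expresses the derived dual of a Chevalley--Eilenberg comodule with coefficients in terms of the abstract dualizing module $\bd$, and Theorem~\ref{thm:super-g-trace}, which identifies $\bd$ explicitly for a graded Lie algebra. After these, only an elementary computation of a tensor-product module structure is left.

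First I would note that a finite-dimensional graded Lie algebra $\g$ is proper, and a finite-dimensional $\g$-module $M$ is a proper dg $\g$-module, so Corollary~\ref{D-dual-CE} applies and gives a weak equivalence of $\CE_\bullet(\g)$-comodules
\[
d\big(\CE_\bullet(\g,M)\big)\ \simeq\ \CE_\bullet\big(\g,\bd\otimes M^{*}\big),
\]
where $\bd=d(\bk)=\CE^{\bullet}(\g,\U(\g))$ is the dualizing module of $\U(\g)$, regarded as a $\g$-module by turning its right $\U(\g)$-action into a left one through the antipode, and $M^{*}$ carries the induced action. Next I would apply Theorem~\ref{thm:super-g-trace}: there is a quasi-isomorphism of right dg $\U(\g)$-modules $\bd\simeq\bk[-|\g|]$, the target being one-dimensional with $u\in\g$ acting by $k\mapsto\str(\ad_{u})k$. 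Since tensoring with the finite-dimensional $M^{*}$ preserves quasi-isomorphisms and the functor $N\mapsto\CE_\bullet(\g,N)$ carries quasi-isomorphisms of $\g$-modules to weak equivalences of comodules (a point already in force in Corollary~\ref{D-dual-CE}: such maps are filtered quasi-isomorphisms for the admissible filtration by symmetric degree, cf.\ \cite[Section~6.10]{Pos}), this yields
\[
d\big(\CE_\bullet(\g,M)\big)\ \simeq\ \CE_\bullet\big(\g,\bk[-|\g|]\otimes M^{*}\big),
\]
with $\bk[-|\g|]$ carrying the supertrace action.

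Finally I would identify the coefficient $\g$-module $\bk[-|\g|]\otimes M^{*}$ with $(M^{*}[-|\g|])^{\tw}$. As a graded vector space it is $M^{*}[-|\g|]$; and since $\U(\g)$ is cocommutative with every $u\in\g$ primitive, $\Delta(u)=u\otimes 1+1\otimes u$, the diagonal action on $\bk[-|\g|]\otimes M^{*}$ splits as the sum of the action on the two factors, the factor $\bk[-|\g|]$ contributing the scalar $\str(\ad_{u})$ and the factor $M^{*}$ contributing its original action. Translating between the left-module convention that appears inside $\CE_\bullet(\g,-)$ and the right-module convention of the statement via $S|_{\g}=-\id$, and keeping track of the Koszul signs coming from the shift by $|\g|$, one recovers exactly formula~\eqref{DG-U-mod}. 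The only mildly technical point of the whole argument is this last round of sign bookkeeping, which I expect to present no genuine obstacle: the sign attached to the supertrace term is immaterial because $\str(\ad_{u})$ vanishes unless $|u|=0$, and the remaining signs cancel between the shift and the passages between left and right modules. In essence, the substantive work --- the computation of the $\g$-action on the dualizing module --- was already done in Theorem~\ref{thm:super-g-trace}, and the present theorem is its Koszul-dual repackaging.
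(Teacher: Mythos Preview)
Your proof is correct and follows essentially the same route as the paper: invoke Corollary~\ref{D-dual-CE} to reduce to identifying $\bd\otimes M^{*}$, then use Theorem~\ref{thm:super-g-trace} to replace $\bd$ by $\bk[-|\g|]$ with the supertrace action, and finally unpack the diagonal action to obtain formula~\eqref{DG-U-mod}. The paper's proof is a slightly terser version of exactly this argument.
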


\begin{proof}
By Proposition \ref{D-dual-CE}, we deduce that $d(\CE_\bullet(\g, M))$ is weakly equivalent to $\CE_\bullet(\g, \bd\otimes M^*)$. Since the dualizing complex $\bd=\CE^\bullet(\g, \U(\g))$ of a finite dimensional $\mathbb Z$-graded Lie algebra $\g$ is quasi-isomorphic to the  right dg $\U(\g)$-module $\bk[-|\g|]$ (with vanishing differential) and the right $\U(\g)$-module structure on $\bk[-|\g|]$  is given by the supertrace, it follows that the right $\U(\g)$-module structure  on $\bd\otimes M^*$ is quasi-isomorphic to $M^*[-|\g|]$ and the right $\U(\g)$-module structure  is given by \eqref{DG-U-mod} as claimed. Thus we deduce that $d(\CE_\bullet(\g, M))$ is weakly equivalent to
the right dg comodule $\CE_\bullet\big(\g, (M^*[-|\g|])^{\tw}\big)$.
\end{proof}
Let $W$ be a graded vector space  of finite total dimension. Let us choose a basis $x_1,\ldots,x_n$ of $W^*$.  A $\bk$-linear derivation (or a formal vector field on $W$) $\xi$ of the algebra $\hat{\huaS}(W)$ is determined by its values on the generators $x_i, i=1,\ldots, n$ and can therefore be written as a sum
\[
\xi=\sum_{i=1}^n f_i\frac{\partial}{\partial x_i}
\]
where $f_i\in \hat{\huaS}(W)$. Here the degree of the vector field $\xi$ is $|f_i|-|x_i|$.

\begin{defi}
	The \emph{divergence} of a formal vector field $\xi$ on $W$ is defined as
	\[\nabla(\xi)=\sum_{i=1}^n (-1)^{|f_i||x_i|}\frac{\partial f_i}{\partial x_i}.\]
	One has $|\nabla(\xi)|=|\xi|$.
	It is easy to see that   $\nabla(\xi)$ is defined independently of the basis in $A$ and that it obeys the following well-known rules for any vector field $\xi$ and $f\in \hat{\huaS}(W)$:
	\begin{equation}\label{eq:divergenceproperty}
		\nabla(f\xi)=f\nabla\xi+(-1)^{|f||\xi|}\xi(f).
	\end{equation}
\end{defi}
The following result holds.

\emptycomment{
\begin{cor}\label{cor:dualcomodule}
Let $\g$ be a finite dimensional $\mathbb Z$-graded Lie algebra over the field $\bk$. Then the derived linear dual $d(\CE_\bullet(\g))$ of the dg comodule $\CE_\bullet(\g)$ is weakly equivalent to the right dg  $\CE_\bullet(\g)$-comodule $\CE_\bullet(\g)^{[\nabla(d_{\CE})]}[|\g|]$.
\end{cor}
}

\begin{cor}\label{cor:dualcomodule}
Let $\g$ be a finite dimensional $\mathbb Z$-graded Lie algebra over the field $\bk$ and $M$ be a finite dimensional $\mathbb Z$-graded $\g$-module. Then the derived linear dual $d(\CE_\bullet(\g, M))$ of the dg comodule $\CE_\bullet(\g,M)$ is weakly equivalent to
the right dg  $\CE_\bullet(\g)$-comodule $\CE_\bullet(\g,M^*)^{[\nabla(d_{\CE})]}[-|\g|]$.
\end{cor}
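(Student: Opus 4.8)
The plan is to derive the statement directly from Theorem~\ref{derived-CE-comod} by recognising the twisted coefficient module $(M^*[-|\g|])^{\tw}$ of that theorem as a comodule twist in the sense of Proposition~\ref{Twisted-theory}. Recall that the divergence $\nabla(d_{\CE})$ of the homological vector field $d_{\CE}$ is a CE $1$-cocycle, hence a Maurer--Cartan element of the pseudocompact dg algebra $\CE^\bullet(\g)$ dual to $\CE_\bullet(\g)$, so the twisted comodule $\CE_\bullet(\g,M^*)^{[\nabla(d_{\CE})]}$ is defined. The proof then splits into two steps: first, show that $\nabla(d_{\CE})$ is exactly the CE $1$-cocycle representing the character $u\mapsto\str(\ad_u)$ of $\g$; second, show that the functor $\CE_\bullet(\g,-)$ turns a twist of a $\g$-module by such a cocycle into the corresponding comodule twist of Proposition~\ref{Twisted-theory}.

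For the first step I would compute $\nabla(d_{\CE})$ in coordinates from the explicit formula~\eqref{DCE-diff} for $d_{\CE}$ as a formal vector field on $\g[1]$, applying the definition of the divergence generator by generator. The contribution of the $\partial/\partial x^k$-component reduces, after invoking the graded antisymmetry of the structure constants $a^k_{pq}$, to $\sum_p\big(\sum_k a^k_{pk}\big)x^p$, the trace of $\ad_{e_p}$ on the even part of $\g$; the contribution of the $\partial/\partial y^k$-component is $-\sum_p\big(\sum_k b^k_{pk}\big)x^p$, minus the trace of $\ad_{e_p}$ on the odd part; and the $c^k_{pq}$-terms contribute nothing. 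Consequently $\nabla(d_{\CE})=\sum_p\str(\ad_{e_p})x^p$, which is precisely the $1$-cocycle attached to the character $u\mapsto\str(\ad_u)$; by Theorem~\ref{thm:super-g-trace} this is the class $\chi(\g)$ of Corollary~\ref{D-dual-CE}. In particular no $y^q$-terms occur, matching the vanishing of $\str(\ad_{\varepsilon_j})$ noted in Lemma~\ref{odd-map}.

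For the second step I would check, by a direct comparison of the twisted codifferential~\eqref{twisted-comod} with the CE differential of a twisted coefficient module, that for any $\g$-module $N$ and any CE $1$-cocycle $\chi$ the right dg $\CE_\bullet(\g)$-comodule obtained by applying $\CE_\bullet(\g,-)$ to $N$ with its action twisted by $\chi$ coincides with the twist $\CE_\bullet(\g,N)^{[\chi]}$; here one uses that $\CE_\bullet(\g)$ is cocommutative, so that $\CE_\bullet(\g)^{\chi}\cong\CE_\bullet(\g)$ by the Remark following Proposition~\ref{Twisted-theory}. Taking $N=M^*$ and $\chi=\nabla(d_{\CE})$, and using that the shift $[-|\g|]$ commutes with $\CE_\bullet(\g,-)$, Step~1 identifies $(M^*[-|\g|])^{\tw}$ with the $\nabla(d_{\CE})$-twist of $M^*[-|\g|]$, and Theorem~\ref{derived-CE-comod} then yields
\[
d(\CE_\bullet(\g,M))\simeq\CE_\bullet(\g,M^*)^{[\nabla(d_{\CE})]}[-|\g|].
\]

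The only genuinely delicate point is the bookkeeping of signs: one must track them carefully through the divergence computation in Step~1 and through the three twisting conventions involved (the twist of the $\g$-module $M^*$, the $\tw$-twist of Theorem~\ref{derived-CE-comod}, and the comodule twist of Proposition~\ref{Twisted-theory}) so that all of them agree exactly, since $\nabla(d_{\CE})$ is in general not a coboundary and a stray sign could not be absorbed by replacing it with a cohomologous cocycle. Everything else is formal.
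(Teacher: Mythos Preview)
Your proposal is correct and follows essentially the same route as the paper: compute $\nabla(d_{\CE})$ explicitly from \eqref{DCE-diff} to obtain $\sum_q\str(\ad_{e_q})x^q$, then compare the codifferential $d^{[\nabla(d_{\CE})]}$ on $\CE_\bullet(\g,M^*)^{[\nabla(d_{\CE})]}$ with the CE differential of $\CE_\bullet\big(\g,(M^*[-|\g|])^{\tw}\big)$ and invoke Theorem~\ref{derived-CE-comod}. The paper's computation finds these two differentials agree up to an overall factor $(-1)^{|\g|}$, which is harmless and is precisely the sign bookkeeping you flag as the only delicate point.
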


\begin{proof}
We assume that $\{e_1,\ldots,e_n;\varepsilon_{1},\ldots,\varepsilon_{m}\}$ is a  homogeneous basis of $\g$ and with elements $\{e_1,\ldots,e_n\}$ even and $\{\varepsilon_{1},\ldots,\varepsilon_{m}\}$  odd. Set $x_i:=e_i[1]$ by $x_i$ and $y_i:=\varepsilon_i[1]$. Consider the basis $\{x^1,\ldots,x^n;y^1,\ldots,y^m\}$ of  $(\g[1])^*$ dual to $\{x_1,\ldots,x_n;y_1,\ldots,y_m\}$.
Let $d_1$ and $d_2$ be two formal vector fields on $\g[1]$. We have the following equation
\begin{eqnarray*}
\nabla[d_1,d_2]=d_1(\nabla d_2)-(-1)^{|d_1||d_2|}d_2(\nabla d_1).
\end{eqnarray*}
Thus, we deduce that $\nabla[d_{\CE},d_{\CE}]=2d_{\CE}(\nabla(d_{\CE}))=0$. It follows $\nabla(d_{\CE})$ is an MC element of the CE algebra $\CE^\bullet(\g)$.
By a direct calculation, we deduce that
\begin{eqnarray*}
[\nabla(d_{\CE})]&=&\sum_{q=1}^{n} \left\{\sum_{k=1}^{n}a_{qk}^k-\sum_{k=1}^{m}b_{qk}^k \right\}x^q\\
                 &=&\sum_{q=1}^{n} \str(\ad_{e_q})x^q.
\end{eqnarray*}
By Proposition \ref{Twisted-theory}, we deduce that $\CE_\bullet(\g, M^*)^{[\nabla(d_{\CE})]}$ is a dg right $\CE_\bullet(\g)$-comodule and the codifferential $d^{[\nabla(d_{\CE})]}$ is
{\small{
\begin{eqnarray*}
&&d^{[\nabla(d_{\CE})]}(\nu\otimes u_1[1]\ldots u_k[1])\\
&\stackrel{\eqref{twisted-comod}}{=}& d_{\CE}(\nu\otimes u_1[1]\ldots u_k[1])+\sum_{i=1}^{k}(-1)^{|\nu|+|u_1[1]|+\ldots+|u_{i-1}[1]|}[\nabla(d_{\CE})](u_i[1]) \nu\otimes u_1[1]\ldots \widehat{u_i[1]}\ldots u_k[1]\\
&=&d_{\CE}(\nu\otimes u_1[1]\ldots u_k[1])+{\sum_{i=1}^{k}(-1)^{|\nu|+|u_1[1]|+\ldots+|u_{i-1}[1]|}\str(\ad_{u_i}) \nu\otimes u_1[1]\ldots \widehat{u_i[1]}\ldots u_k[1]}.
\end{eqnarray*}
}}
Here $\nu$ is an element of $M^*$ and $u_1[1],\ldots,u_k[1]$ are elements of $\g[1]$.

On the other hand, the twisted codifferential $\widetilde{d_{\CE}}$ on $\CE_\bullet\big(\g, (M^*[-|\g|])^{\tw}\big)$ is determined by the formula
{\small{
\begin{eqnarray*}
&&\widetilde{d_{\CE}}(\nu[-|\g|]\otimes u_1[1]\ldots u_k[1])\\
&=&(-1)^{|\nu|+|\g|}
\nu[-|\g|]\otimes d_{\CE}(u_1[1]\ldots u_k[1])\\&+&\sum_{i=1}^{k}(-1)^{\Theta_i}(-1)^{|\nu|+|\g|}\Big(\big((\nu.u_i)[-|\g|]+\str(\ad_{u_i})\nu[-|\g|]\big)\otimes u_1[1]\ldots \widehat{u_i[1]}\ldots u_k[1]\Big),
\end{eqnarray*}
}}
here $\Theta_i=|u_i[1]|(|u_1[1]|+\ldots+|u_{i-1}[1]|)$. Thus, we deduce that $\widetilde{d_{\CE}}=(-1)^{|\g|}d^{[\nabla(d_{\CE})]}$.
By Theorem \ref{derived-CE-comod}, it follows that $d(\CE_\bullet(\g, M))$ is  weakly equivalent to the right dg  comodule $\CE_\bullet(\g,M^*)^{[\nabla(d_{\CE})]}[-|\g|]$. This finishes the proof.
\end{proof}

Now let $\g$ be an ordinary (ungraded) finite-dimensional Lie algebra and $M$ be a finite-dimensional module. In that case $\CE_\bullet(\g,M)$ is finite-dimensional $\CE_\bullet(\g)$-comodule. Since the derived linear dual of a finite dimensional comodule is the ordinary linear dual, we recover Hazewinkel's result \cite{Haz}:
\begin{cor}\label{cor:Haz}
Let $\g$ be an $n$-dimensional Lie algebra over the field $\bk$ and $M$ be a finite dimensional left $\g$-module. Then one deduces $$\HCE_{i}\big(\g,(M^*)^\tw\big)\cong\HCE^{n-i}(\g,M^*),~~i=0,1,\ldots,n.$$
The underlying vector space of  the right $\g$-module $(M^*)^{\tw}$ is $M^*$ and the right $\g$-module structure  is given by the formula
\begin{eqnarray}\label{DG-U-mod-1}
	\nu._{\tw}u=(\nu.u)+\Str(\ad_{u})\nu,~~u\in \g,\nu\in M^*.
\end{eqnarray}
Here the right action $\nu.u$, as before, corresponds to the left action of $\U(\g)$ on $M^*$, derived from the  $\g$-module structure on $M$.
\end{cor}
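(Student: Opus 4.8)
The plan is to obtain this as a direct specialisation of Theorem \ref{derived-CE-comod} (equivalently, Corollary \ref{cor:dualcomodule}) to the ungraded, finite-dimensional case, the only real work being to unwind the grading conventions. First I would note that when $\g$ is concentrated in degree $0$ and $\g$, $M$ are finite-dimensional, the dg $\CE_\bullet(\g)$-comodule $\CE_\bullet(\g, M)=\huaS(\g[1])\otimes M$ is finite-dimensional and is its own maximal finite-dimensional subcomodule; hence in the definition $d(N)=\holim_\alpha N^*_\alpha$ the indexing diagram has a terminal object, so the derived linear dual reduces to the ordinary linear dual, $d(\CE_\bullet(\g, M))\cong\CE_\bullet(\g, M)^*$. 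As a cochain complex of $\bk$-vector spaces this dual is $\Lambda^\bullet(\g^*)\otimes M^*$ with the CE cochain differential (with $M^*$ the contragredient, i.e. via the antipode of $\U(\g)$, module), so it computes $\HCE^\bullet(\g, M^*)$. Since $\g[1]$ sits in cohomological degree $-1$, the summand $\huaS^i(\g[1])\otimes M$ of $\CE_\bullet(\g, M)$ sits in cohomological degree $-i$; therefore $\HCE_i(\g, M)$ contributes in cohomological degree $-i$, and $H^i(\CE_\bullet(\g, M)^*)\cong \HCE_i(\g, M)^*\cong\HCE^i(\g, M^*)$ by the classical linear duality between Lie algebra homology and cohomology.

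Next I would apply Theorem \ref{derived-CE-comod}. In the ungraded case there are no odd basis vectors, so $|\g|=\dim\g=n$, the supertrace $\str$ coincides with the ordinary trace $\Str$, and the twisted module $(M^*[-|\g|])^{\tw}$ of \eqref{DG-U-mod} becomes $(M^*)^{\tw}[-n]$, with $(M^*)^{\tw}$ carrying precisely the right $\U(\g)$-action $\nu._{\tw}u=(\nu.u)+\Str(\ad_u)\nu$ of \eqref{DG-U-mod-1} (the right $\g$-action on $M^*$ being, as always, the contragredient one). Thus Theorem \ref{derived-CE-comod} yields a weak equivalence of $\CE_\bullet(\g)$-comodules $d(\CE_\bullet(\g, M))\simeq\CE_\bullet\big(\g,(M^*)^{\tw}\big)[-n]$.

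Finally I would combine the two descriptions of $d(\CE_\bullet(\g, M))$ and pass to cohomology in degree $n-i$. The first description gives $H^{n-i}(\CE_\bullet(\g, M)^*)=\HCE^{n-i}(\g, M^*)$, while the second gives $H^{n-i}\big(\CE_\bullet(\g,(M^*)^{\tw})[-n]\big)=H^{-i}\big(\CE_\bullet(\g,(M^*)^{\tw})\big)=\HCE_i\big(\g,(M^*)^{\tw}\big)$, since the CE homological degree $i$ part lives in cohomological degree $-i$. This produces the asserted isomorphism $\HCE_i\big(\g,(M^*)^{\tw}\big)\cong\HCE^{n-i}(\g, M^*)$ for $i=0,1,\dots,n$, with the module structure on $(M^*)^{\tw}$ as in \eqref{DG-U-mod-1}. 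The only genuinely delicate point in the argument is the bookkeeping of the three grading shifts — the built-in $[1]$ inside $\CE_\bullet(\g)=\huaS(\g[1])$, the linear dual, and the $[-|\g|]$ from Theorem \ref{derived-CE-comod} — which together must convert CE homological degree $i$ into CE cohomological degree $n-i$; everything else is a routine specialisation of results already established.
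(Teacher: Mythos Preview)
Your proposal is correct and follows the same approach as the paper: specialise Theorem~\ref{derived-CE-comod} to the ungraded finite-dimensional situation, observe that $\CE_\bullet(\g,M)$ is then a finite-dimensional comodule so that its derived linear dual coincides with the naive linear dual, and read off the result. The paper's own justification is a single sentence to this effect, whereas you have spelled out the grading bookkeeping (the identification $|\g|=n$, the location of $\huaS^i(\g[1])\otimes M$ in cohomological degree $-i$, and the effect of the shift $[-n]$) that the paper leaves implicit.
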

\subsection{Dg Lie algebras revisited} Let us now assume that $\g$ is a proper dg algebra. Recall from Theorem \ref{the:1-dim} that the dualizing complex $\CE^\bullet(\g,\U(\g))$ has 1-dimensional cohomology. It is also a right dg $\U(\g)$-module. Therefore, the one-dimensional space $\HCE^ \bullet(\g,\U(\g))$ is a right $\HH^\bullet(\g)$-module. The following result holds.
\begin{prop}\label{prop:action}
	The right $\HH^\bullet(\g)$-module structure on $\bk[-|\HH^\bullet(\g)|]\cong\HCE^ \bullet(\HH^\bullet(\g),\U(\HH^\bullet(\g)))$ is given by the supertrace of the adjoint representation of the graded Lie algebra $\HH^\bullet(\g)$: an element $a\in \HH^\bullet(\g)$ acts on $\bk[-|\HH^\bullet(\g)|]$ as the multiplication by $\str\ad_a$.
\end{prop}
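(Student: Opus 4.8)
The plan is to reduce the statement to the graded case already settled in Theorem~\ref{thm:super-g-trace}, transporting the answer along the spectral sequence used in the proof of Theorem~\ref{the:1-dim}. Write $H:=\HH^\bullet(\g)$, which is a \emph{graded} Lie algebra, so Theorem~\ref{thm:super-g-trace} applies to $H$ directly: it identifies the dualizing complex $\CE^\bullet(H,\U(H))$ with the one-dimensional right module $\bk[-|H|]$ on which $a\in H$ acts by multiplication by $\str(\ad_a)$. Thus the only real content of the proposition is to verify that the isomorphism $\HCE^\bullet(\g,\U(\g))\cong\HCE^\bullet(H,\U(H))$ produced by the spectral sequence is an isomorphism of right $H$-modules; the supertrace formula is then inherited verbatim from Theorem~\ref{thm:super-g-trace}.

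First I would recall the filtration producing that spectral sequence. On $\CE^\bullet(\g,\U(\g))\cong\hat{\huaS}(\g^*[-1])\hat{\otimes}\U(\g)$ one filters by the symmetric (CE) degree of the first tensor factor. The $E_0$-differential is the sum of the internal differentials on $\hat{\huaS}(\g^*[-1])$ and on $\U(\g)$, so that $E_1\cong\hat{\huaS}(H^*[-1])\hat{\otimes}\U(H)=\CE^\bullet(H,\U(H))$ as graded vector spaces, using $\HH^\bullet(\U(\g))\cong\U(H)$ (PBW together with $\operatorname{char}\bk=0$ and Künneth), with $E_1$-differential the CE differential of $H$, whence $E_2\cong\HCE^\bullet(H,\U(H))$. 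Both $E_2$ and the abutment $\HCE^\bullet(\g,\U(\g))$ are one-dimensional and concentrated in a single total degree, so the spectral sequence degenerates at $E_2$ and the induced filtration on the abutment is trivial.

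The key remaining step is equivariance. The right $\U(\g)$-action on $\CE^\bullet(\g,\U(\g))$ is right multiplication on the $\U(\g)$-factor alone; it therefore preserves the CE-degree filtration and commutes with every differential, so each page $E_r$ is a right $\U(\g)$-module with $\U(\g)$-linear differentials. On $E_1$ the action of $u\in\U(\g)$ factors through its class $[u]\in\HH^\bullet(\U(\g))=\U(H)$ acting by right multiplication, so $E_1=\CE^\bullet(H,\U(H))$ carries exactly the natural right $\U(H)$-module structure, and restricting to $H\subset\U(H)$ gives the natural right $H$-action. Since the filtration on the abutment is trivial, the induced right $H$-module structure on $\HCE^\bullet(\g,\U(\g))$ coincides with that on $E_2=E_\infty=\HCE^\bullet(H,\U(H))$. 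I expect the main obstacle to be precisely this bookkeeping: confirming that the $\U(\g)$-action descends on $E_1$ to honest right multiplication by $\U(H)$ rather than to some perturbed action, and checking that no extension ambiguity twists the character, which here is automatic from one-dimensionality. Applying Theorem~\ref{thm:super-g-trace} to $H$ then shows that $a\in H$ acts by $\str(\ad_a)$, as claimed.
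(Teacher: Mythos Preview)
Your proof is correct and follows essentially the same approach as the paper: use the degeneration of the spectral sequence from Theorem~\ref{the:1-dim}, observe that the right $\U(\g)$-action preserves the filtration and induces the natural $\U(\HH^\bullet(\g))$-action on $E_2$, invoke Theorem~\ref{thm:super-g-trace} for the graded Lie algebra $\HH^\bullet(\g)$, and use one-dimensionality to rule out any extension ambiguity. Your write-up is in fact somewhat more explicit than the paper's about why the action on $E_1$ is honest right multiplication by $\U(\HH^\bullet(\g))$ and why no correction terms survive, but the logical structure is identical.
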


\begin{proof}
	Recall that the spectral sequence for $\HCE^\bullet(\g,\U(\g))$ has its $E_2$-term isomorphic to $\HCE^\bullet(\HH^\bullet(\g),\U(\HH^\bullet(\g))$, and it is one-dimensional, forcing the collapse of the spectral sequence. Since $\HCE^ \bullet(\HH^\bullet(\g),\U(\HH^\bullet(\g)))$ is one-dimensional,  any element in the graded Lie algebra $\HH^\bullet(\g)$ acts on it  as a multiplication by a scalar, and it follows from Theorem \ref{thm:super-g-trace} that this action modulo higher filtration terms is the supertrace of the adjoint representation. Again, using one-dimensionality of $\HCE^ \bullet(\HH^\bullet(\g),\U(\HH^\bullet(\g)))$, we see that no higher correction terms to the above action are present.
\end{proof}
Let us now assume that $\g$ is a \emph{connective} dg Lie algebra, i.e. $\g$ is concentrated in cohomological degrees $\leq 0$. In that case the zero-truncation gives a map of dg Lie algebras $\g\to\HH^0(\g)$. Then we have the following result.
\begin{cor}\label{cor:connective}
	If $\g$ is a proper connective dg Lie algebra, then the dualizing module $\CE^\bullet(\g,\U(\g))$ is quasi-isomorphic to the one-dimensional space $\bk[-|\HH^\bullet(\g)|]$ with the action of $\g$ given by the composition
	\[\xymatrix{
	\g\ar[r]&\HH^0(\g)\ar^{\str}[r]&\bk
.}
	\]
\end{cor}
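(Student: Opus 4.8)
The plan is to bootstrap from Proposition~\ref{prop:action}, the only extra input being a computation of $\HCE^1(\g)$ that uses connectivity. Write $\mathfrak h:=\HH^0(\g)$; since $\g$ is connective, $\g^j=0$ for $j>0$, the zero-truncation $\g\to\mathfrak h$ is a morphism of dg Lie algebras, and $\mathfrak h$ is an ordinary finite-dimensional Lie algebra because $\g$ is proper. Recall from the discussion after Theorem~\ref{the:1-dim} that the $\g$-action on $\bd\simeq\bk[-|\g|]$ is classified by the class $\chi(\g)\in\HCE^1(\g)$, and that cohomologous cocycles yield the same object of $D(\U(\g))$. So the statement to prove is that $\chi(\g)$ is the pullback along $\g\to\mathfrak h$ of the character $a\mapsto\str(\ad_a)$ of $\mathfrak h$, where $\ad$ denotes the adjoint representation of the \emph{graded} Lie algebra $\HH^\bullet(\g)$ (on which $\HH^0(\g)$ acts by degree-zero operators).

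First I would compute $\HCE^1(\g)$. Because $\g[1]$ is concentrated in degrees $\le -1$, its dual lies in degrees $\ge 1$ and every monomial of length $\ge 2$ in $\CE^\bullet(\g)=\hat{\huaS}((\g[1])^*)$ has degree $\ge 2$; hence $\CE^0(\g)=\bk$, $\CE^1(\g)=(\g^0)^*$, the differential into $\CE^1(\g)$ vanishes, and $\HCE^1(\g)=\ker\bigl(d_{\CE}\colon(\g^0)^*\to\CE^2(\g)\bigr)$. Unwinding $d_{\CE}$ in this degree one finds it has a component dual to the internal differential $\g^{-1}\to\g^0$ and a component dual to the bracket on $\g^0$; a cocycle is therefore precisely a linear functional on $\g^0$ that descends to $\mathfrak h$ and annihilates brackets, i.e.\ a character of $\mathfrak h$. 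Consequently the map $\HCE^1(\mathfrak h)\to\HCE^1(\g)$ induced by $\g\to\mathfrak h$ is an isomorphism, so $\chi(\g)$ is the pullback of a character $\psi$ of $\mathfrak h$, and $\bd$ is represented in $D(\U(\g))$ by the restriction along $\g\to\mathfrak h$ of the one-dimensional $\mathfrak h$-module $\bk_\psi[-|\g|]$.

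It then remains to identify $\psi$, which I would do by passing to cohomology. From the description of $\bd$ just obtained, the right $\HH^\bullet(\g)$-module structure on $\HH^\bullet(\bd)\cong\bk[-|\g|]$ is the composite $\HH^\bullet(\g)\twoheadrightarrow\mathfrak h\xrightarrow{\psi}\bk$, the first map being the projection onto the degree-zero summand induced by truncation. On the other hand Proposition~\ref{prop:action} computes this very module structure as $a\mapsto\str(\ad_a)$ for the adjoint representation of $\HH^\bullet(\g)$. Comparing on $\mathfrak h=\HH^0(\g)$, where the projection is the identity, gives $\psi(a)=\str(\ad_a)$ for all $a\in\mathfrak h$; on $\HH^{<0}(\g)$ the two descriptions agree automatically, since an operator $\ad_a$ with $a$ homogeneous of nonzero degree is either odd or shifts the grading by a nonzero even amount, and so has zero supertrace. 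Hence $\psi=\str$ and $\bd\simeq\bk[-|\g|]$ with $\g$ acting through $\g\to\HH^0(\g)\xrightarrow{\str}\bk$, as claimed. The one point requiring genuine care is the $\HCE^1$ computation together with the bookkeeping needed to transport the $\HH^\bullet(\g)$-action across the restriction functor; everything else is a direct appeal to Proposition~\ref{prop:action}.
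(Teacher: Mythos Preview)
Your argument is correct and follows essentially the same strategy as the paper: both proofs invoke Proposition~\ref{prop:action} to pin down the action of $\HH^\bullet(\g)$ and then use connectivity to rule out higher corrections. The only difference is packaging---the paper phrases the second step as ``the grading assumption implies the higher $A_\infty$-components of the action vanish,'' whereas you make this explicit by computing that $\CE^1(\g)=(\g^0)^*$ consists of purely linear cochains, so that $\chi(\g)$ is automatically a strict character pulled back from $\HH^0(\g)$; your version is more detailed but the content is the same.
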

\begin{proof}
	By Proposition \ref{prop:action}, the action of $\HH^\bullet(\g)$ (or, equivalently, of $\U(\HH^\bullet(\g))$) is through the map
		\[\xymatrix{
		\HH^\bullet(\g)\ar[r]&\HH^0(\g)\ar^{\str}[r]&\bk,
	}
	\]
	where the first map above is just the projection onto the zeroth graded component. But the grading assumption on $\HH^\bullet(\g)$ (and, thus, on $\U(\HH^\bullet(\g))$) imply that the higher $A_\infty$-components of the action vanish, proving the claim.
\end{proof}
Our next application is related to rational homotopy theory, so we set $\bk:=\mathbb{Q}$. Let $X$ be a connected rationally nilpotent topological space of finite type and $A(X)$ be its minimal Sullivan-de Rham model. It is an augmented commutative dg algebra determined up to quasi-isomorphism and faithfully recording the rational homotopy type of $X$. Then the dual bar-construction $(\B(A))^*$ is a dg Hopf algebra and is a universal enveloping algebra of a dg Lie algebra $\g:=\g(A)$ called the Lie-Quillen model of $X$, see \cite{nei} for this approach to rational homotopy theory. The cohomology $\HH^\bullet(\g)$ is, up to regrading, the rational homotopy groups of $X$, except $\HH^0(\g)$ is the rational Malcev Lie algebra  of $\pi_1(X)$.

\begin{cor}\label{cor:LieQuillen}
	Let $\g$ be a Lie-Quillen model of a nilpotent rational space of finite type and having finite rational Postnikov tower. Then $\U(\g)$ is a Gorenstein dg algebra and so, $D(\U(\g))$ has an open CY structure given by a pair $\big(\Perf(\U(\g)),\operatorname{pvd}(\U(\g))\big)$.
\end{cor}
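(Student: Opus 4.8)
The plan is to deduce this directly from Corollary~\ref{cor:connective} and Corollary~\ref{cor:HopfCY}; the only genuine content is checking that the topological hypotheses on $X$ supply exactly the algebraic hypotheses those corollaries need.

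First I would verify that the Lie-Quillen model $\g$ of $X$ is a \emph{proper connective} dg Lie algebra. By construction $\g$ is concentrated in non-positive cohomological degrees, with $\HH^0(\g)$ the rational Malcev Lie algebra of $\pi_1(X)$ and, up to the standard regrading, $\HH^{-i}(\g)\cong\pi_{i+1}(X)\otimes\mathbb{Q}$ for $i\ge 1$. The hypotheses ``finite type'' and ``finite rational Postnikov tower'' say precisely that $X$ has only finitely many non-trivial rational homotopy groups, each finite-dimensional, and that $\pi_1(X)$ is a finitely generated nilpotent group, hence has finite-dimensional Malcev Lie algebra; consequently $\HH^\bullet(\g)$ is totally finite-dimensional, so $\g$ is proper, and it is connective.

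Next I would apply Corollary~\ref{cor:connective}: the dualizing complex $\CE^\bullet(\g,\U(\g))$ is quasi-isomorphic to $\bk[-|\g|]$ with the $\g$-action given by the composite $\g\to\HH^0(\g)\xrightarrow{\str}\bk$, an element $a\in\HH^0(\g)$ acting by $\str\ad_a$, where $\ad_a$ is the adjoint operator on $\HH^\bullet(\g)$. Here I would use nilpotency of $X$: $\HH^0(\g)$ is a finite-dimensional nilpotent Lie algebra and it acts nilpotently on each $\HH^{-i}(\g)$ (this is the nilpotency of the $\pi_1$-action on the higher homotopy groups), so every $\ad_a$ with $a\in\HH^0(\g)$ is a nilpotent operator and therefore has vanishing supertrace. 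Hence the $\g$-action on $\bk[-|\g|]$ is trivial, i.e. $\RHom_{\U(\g)}(\bk,\U(\g))\cong\bk[-|\g|]$ with trivial right action --- which is exactly the statement that $\U(\g)$ is $|\g|$-Gorenstein.

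Finally, since $\U(\g)\cong\big(\B(A(X))\big)^*$ is a dg Hopf algebra, Corollary~\ref{cor:HopfCY} applies verbatim and gives that $\big(\Perf(\U(\g)),\operatorname{pvd}(\U(\g))\big)$ is a $|\g|$-Calabi-Yau structure on $D(\U(\g))$; via Koszul duality $D(\U(\g))$ is the derived category of $\infty$-local systems on $X$, so this generalizes Poincar\'e duality for elliptic spaces \cite{FHT}. I expect the only mildly delicate point to be the translation carried out in the first two steps, namely making sure the finiteness and nilpotency assumptions on $X$ really force $\HH^0(\g)$ to be a finite-dimensional nilpotent Lie algebra acting nilpotently throughout $\HH^\bullet(\g)$, so that the supertrace vanishes; once that is in place the statement is immediate from the corollaries already established.
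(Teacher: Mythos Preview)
Your proposal is correct and follows essentially the same route as the paper's proof: verify that $\g$ is proper and connective, invoke Corollary~\ref{cor:connective} to identify the dualizing module with $\bk[-|\g|]$ via the supertrace of the adjoint action of $\HH^0(\g)$, observe that nilpotency forces this supertrace to vanish, and then apply Corollary~\ref{cor:HopfCY}. The paper's argument is terser (it simply asserts the action is nilpotent), whereas you spell out explicitly why the topological hypotheses translate into finite-dimensionality and nilpotency of $\HH^0(\g)$ and of its action on each $\HH^{-i}(\g)$; this elaboration is welcome but does not change the logical structure.
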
	
\begin{proof}
	The assumptions on $\g$ (or, rather on the space of which it is a Lie-Quillen model) ensure that $\g$ is a proper dg Lie algebra. Therefore,  by Corollary \ref{cor:connective}, the dg Lie algebra $\g$ acts on its dualizing complex through the supertrace of the adjoint action of $\HH^\bullet(\g)$ but this action is nilpotent and so, its (super)trace is trivial. By Corollary \ref{cor:HopfCY}, $D(\U(\g))$ has an open CY structure.
\end{proof}
Recall that associated to a connected pointed topological space $X$, is the triangulated category $\operatorname{Loc}_{\infty}(X)$ of infinity-local systems. The category $\operatorname{Loc}_{\infty}(X)$ is an important homotopical invariant of $X$, and it is equivalent to the derived category of cohomologically locally constant sheaves on $X$ as well as $D(C_\bullet L(X))$, the derived category of the singular chain algebra on $L(X)$, the based loop space on $X$, cf. for example \cite[Section 4.3]{HCL} and references therein. We will consider here $\mathbb{Q}$-linear infinity local systems (which means, in particular, that the singular chain algebra on the loop space of $X$ is taken with coefficients in $\mathbb{Q}$) and assume, in addition, that $X$ is simply-connected. In that case the dg algebra $C_\bullet L(X)$ is quasi-isomorphic to $\U(\g)$ where $\g$ is the Lie-Quillen model of $X$,\cite[Theorem 26.5]{FHT}. Then we have the following result.
\begin{cor}\label{cor:simply-connected}
	Let $\bk=\mathbb{Q}$. Given a rational simply-connected topological space $X$ of finite type and with finite rational Postnikov tower, its category of $\infty$-local systems $\operatorname{Loc}_{\infty}(X)$ has the structure of an open CY-category given by a pair $\Big(\Perf\big(C_\bullet L(X)\big),\operatorname{pvd}\big(C_\bullet L(X)\big)\Big)$.
\end{cor}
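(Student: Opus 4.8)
The plan is to deduce this from Corollary \ref{cor:LieQuillen} together with the standard dictionary relating infinity local systems on $X$ to modules over the universal enveloping algebra of its Lie-Quillen model.

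First I would invoke the equivalences recalled just above the statement: for a connected pointed space $X$ one has a triangulated equivalence $\operatorname{Loc}_{\infty}(X)\simeq D(C_\bullet L(X))$, where $C_\bullet L(X)$ denotes the $\mathbb{Q}$-chains on the based loop space $L(X)$, and, when $X$ is rationally simply connected of finite type, a quasi-isomorphism of dg algebras $C_\bullet L(X)\simeq\U(\g)$ with $\g$ the Lie-Quillen model of $X$. Composing these gives an equivalence of triangulated categories $\operatorname{Loc}_{\infty}(X)\simeq D(\U(\g))$, and this equivalence matches $\Perf$ with $\Perf$ and $\operatorname{pvd}$ with $\operatorname{pvd}$, since these subcategories are described intrinsically (compact objects, respectively objects of finite-dimensional total cohomology).

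Next I would check that the hypotheses on $X$ place us in the situation of Corollary \ref{cor:LieQuillen}. Finite type together with a finite rational Postnikov tower forces $\HH^\bullet(\g)$ --- which up to regrading is $\pi_\bullet(X)\otimes\mathbb{Q}$ --- to be totally finite-dimensional, so $\g$ is a proper connective dg Lie algebra; and simple connectivity makes $\HH^0(\g)$, the rational Malcev Lie algebra of $\pi_1(X)$, vanish. By Corollary \ref{cor:connective} the dg Lie algebra $\g$ then acts on its dualizing module through the composite $\g\to\HH^0(\g)\to\bk$, which is zero; hence the dualizing module is $\bk[-|\g|]$ with trivial action and $\U(\g)$ is a Gorenstein dg algebra. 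Equivalently, one simply quotes Corollary \ref{cor:LieQuillen}, whose proof makes exactly this observation --- here the nilpotent Lie algebra $\HH^0(\g)$ is in fact trivial.

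Finally, since $\U(\g)$ is a dg Hopf algebra that is Gorenstein, Corollary \ref{cor:HopfCY} shows that the pair $\big(\Perf(\U(\g)),\operatorname{pvd}(\U(\g))\big)$ is a CY structure on $D(\U(\g))$. Transporting this structure across the equivalence $\operatorname{Loc}_{\infty}(X)\simeq D(\U(\g))$ of the first step yields the asserted CY structure on $\operatorname{Loc}_{\infty}(X)$. The only genuinely substantive point --- and the main, if mild, obstacle --- is the properness of $\g$: once one knows $\HH^\bullet(\g)$ is totally finite-dimensional, the rest is bookkeeping with results already established.
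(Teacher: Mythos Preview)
Your proposal is correct and follows essentially the same approach as the paper: the paper's proof is the single line ``This follows at once from Corollary~\ref{cor:LieQuillen}'', relying on the identification $\operatorname{Loc}_{\infty}(X)\simeq D(\U(\g))$ recalled just before the statement. Your version simply unpacks this by explicitly matching the hypotheses (simply-connected $\Rightarrow$ nilpotent, finite rational Postnikov tower $\Rightarrow$ $\g$ proper) and transporting the CY structure across the equivalence; the additional observation that $\HH^0(\g)=0$ in the simply-connected case is correct but not needed, since Corollary~\ref{cor:LieQuillen} already covers the nilpotent case.
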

\begin{proof}
	This follows at once from Corollary \ref{cor:LieQuillen}.
\end{proof}
\begin{rem}
	A simply-connected space is \emph{elliptic} if it has totally finite-dimensional rational cohomology and rational homotopy groups, cf. \cite{FHT} regarding this notion. It is known that elliptic spaces have Poincar\'e duality in its rational cohomology. Spaces considered in Corollary \ref{cor:simply-connected} are more general than elliptic spaces in that total finite-dimensionality of its rational cohomology is not assumed (though total finite-dimensionality of its rational homotopy is). It may be natural to call simply-connected topological spaces $X$ for which $\pi_\bullet(X,\mathbb{Q})$ is totally finite dimensional $\mathbb{Q}$-vector space, \emph{generalized elliptic spaces}. The simplest example of a generalized elliptic space that is not elliptic is $\operatorname{CP}^\infty$, the infinite complex projective space.  We see that the category of infinity local systems on generalized elliptic spaces has a CY structure (which boils down to ordinary Poincar\'e duality for genuinely elliptic spaces). In fact, we see that a generalized elliptic space $X$ is a rational \emph{Frobenius space} in the sense of \cite{BCL}: the dg algebra of chains on based loops of $X$ is Gorenstein.
\end{rem}
\subsection{Dualizing complexes for Lie superalgebras}
In this subsection, we assume that $\g=\g_0\oplus\g_1$ is a finite dimensional Lie superalgebra (i.e. $\mathbb{Z}/2$-graded Lie algebra) over the field $\bk$ of (super)dimension $n|m$ (i.e. $\dim \g_0=n$ and $\dim \g_1=m$). In this situation, all results obtained in the $\mathbb{Z}$-graded case, continue to be valid, with the same proofs. We will limit ourselves with formulating an analogue of Theorem \ref{thm:super-g-trace}. As before, let $\{e_1,\ldots,e_n\}$ be a basis of $\g_0$ and $\{\varepsilon_{1},\ldots,\varepsilon_{m}\}$ be a basis of $\g_1$. We denote $e_i[1]$ by $x_i$ and $\varepsilon_i[1]$ by $y_i$ respectively. Then  $\{x_1,\ldots,x_n;y_1,\ldots,y_m\}$ is a basis of $\g[1]$ and $\{x^1,\ldots,x^n;y^1,\ldots,y^m\}$ is the dual basis of $(\g[1])^*$.
By a  calculation, similar to that of $\mathbb Z$-graded Lie algebras, we deduce the following result.
\begin{theorem}\label{thm:supertrace}
	The right dg  $\U(\g)$-module  $\CE^\bullet(\g,\U(\g))$ is quasi-isomorphic to the  one dimensional right dg $\U(\g)$-module $\bk$ placed in degree $n+m\mod 2$ (with vanishing differential). The right $\U(\g)$-module structure on $\bk$  is given by the supertrace
	\begin{eqnarray}\label{dg-U-mod-3}
		k.u=\str(\ad_{u})k,~~u\in \g,k\in\bk.
	\end{eqnarray}
	If $x^1,\ldots, x^n$ is the odd part of a basis of $(\g[1])^*$ (corresponding to the even part of the corresponding basis of $\g$) and $\varepsilon_1,\ldots, \varepsilon_m$ is the odd part of a basis of $\g$, a cocycle representing the only (up to a scalar multiple) nontrivial cohomology class of $\CE^\bullet(\g,\U(\g))$ can be chosen to be the deformed Berezinian
	\[
	\widetilde{\huaB}:=\alpha(x^1\ldots x^n\otimes \varepsilon_1\ldots\varepsilon_m)
	\]
	where $\alpha$ is defined in Equation $(\ref{eq:alpha})$.
\end{theorem}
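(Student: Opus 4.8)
The plan is to follow, step for step, the proof of Theorem \ref{thm:super-g-trace}, with the $\mathbb{Z}$-grading replaced throughout by the $\mathbb{Z}/2$-grading (parity). The key point is that none of the ingredients in Section \ref{sec:technical} genuinely needs the integral grading: the Koszul sign rule, the Chevalley--Eilenberg and PBW constructions, the Gutt star product, the Hodge-theoretic description of the Berezinian (the identity \eqref{Berezinian} and its consequences, due to Covolo), and the Homological Perturbation Lemma are all phrased purely in terms of the parities of homogeneous elements, so they remain valid verbatim in the super setting.

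Concretely, I would first identify $\CE^\bullet(\g,\U(\g))\cong\hat{\huaS}[(\g[1])^*]\hat{\otimes}\U(\g)$ with the same formulae for the differential $\delta$ and the right $\U(\g)$-action, now read with parities. The spectral-sequence argument of Theorem \ref{the:1-dim}, whose $E_1$-page reduces to the Koszul complex computing $\operatorname{Ext}_{\huaS(\g)}(\bk,\huaS(\g))$, then shows that the cohomology is one-dimensional, spanned by the super-Berezinian $\huaB=x^1\cdots x^n\otimes\varepsilon_1\cdots\varepsilon_m$; since each $x^i$ and each $\varepsilon_j$ is odd, its parity is $n+m\bmod 2$. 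This is exactly where the ``degree $|\g|$'' of the $\mathbb{Z}$-graded statement, namely $n-\sum|e_i|+\sum|\varepsilon_j|$, collapses to $n+m\bmod 2$.

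Next, I would transport the module structure along $\id\otimes\Phi$ to $\hat{\huaS}[(\g[1])^*]\hat{\otimes}\huaS(\g)$, where it becomes the Gutt star product, assemble the abstract Hodge decomposition $(d,s,t)$ from the Laplacian $\Delta$, note that $\partial=d+x$ is a perturbation with $sx$ and $xs$ locally nilpotent, and invoke the Homological Perturbation Lemma to obtain the one-dimensional deformation retract spanned by the deformed Berezinian $\widetilde{\huaB}=\alpha(\huaB)$, with $\alpha$ as in \eqref{eq:alpha}. The vanishing of all higher $A_\infty$-action maps follows, as in the $\mathbb{Z}$-graded case, from the CE-bigrading of $\CE^\bullet(\g,\U(\g))$ --- $\U(\g)$ is still an ordinary (super)algebra with zero differential --- together with the fact that the operators $s$ decorating the tree edges strictly lower the CE-degree while the retract is concentrated in a single CE-degree; hence $\widetilde{\huaB}$ spans a genuine one-dimensional right $\U(\g)$-module.

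Finally, I would compute the action: it suffices to evaluate $(\alpha t\beta)(\widetilde{\huaB}_*u)$ modulo higher weight for $u$ running over a homogeneous basis, and the lowest-weight term is the expression in Lemmas \ref{even-map} and \ref{odd-map}. That computation uses only the explicit $d_{\CE}$ of \eqref{DCE-diff} and the definitions of $s,x,t$, all of which are sign-sensitive only to parity, so it reproduces $\big(\sum_k a_{ik}^k-\sum_k b_{ik}^k\big)\huaB=\str(\ad_{e_i})\huaB$ for $u=e_i$ even and $0=\str(\ad_{\varepsilon_i})\huaB$ for $u=\varepsilon_i$ odd (an odd operator has vanishing supertrace), giving $k.u=\str(\ad_u)k$. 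I expect the only real obstacle to be sign bookkeeping: one must check that every sign appearing in Section \ref{sec:technical} was already a function of parities alone rather than of the integral degrees, which a careful inspection confirms.
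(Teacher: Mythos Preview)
Your proposal is correct and matches the paper's own approach: the paper explicitly states that Theorem \ref{thm:supertrace} follows ``by a calculation, similar to that of $\mathbb{Z}$-graded Lie algebras,'' i.e.\ by rerunning Section \ref{sec:technical} with parities in place of integer degrees. Your identification of the one nontrivial check --- that the vanishing of higher $A_\infty$-action maps rests on the CE-grading (which remains a genuine $\mathbb{Z}_{\ge 0}$-grading in the super case) rather than on the total grading --- is exactly right, and the collapse of $|\g|$ to $n+m\bmod 2$ is the only change in the statement.
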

\subsection{Dualizing complexes for $L_\infty$-algebras} Recall that an $L_\infty$-algebra structure on a graded vector space $\g$ is a continuous derivation $\ell$ of the graded pc algebra $\hat{\huaS}(\g^*[-1])$ having no constant term and such that $[\ell,\ell]=0$. For simplicity, we will assume that $\dim(\g)<\infty$. Such a derivation is determined by its restriction on $\g^*[-1]$ which is a linear map $$\g^*[-1]\to \hat{\huaS}(\g^*[-1])\cong \prod_{k=0}^\infty \huaS^k(\g^*[-1]).$$ So, the derivation $\ell$ has components $l_k: \g^*[-1]\to \huaS^k(\g^*[-1])$ and the requirement of the vanishing constant term implies that $l_0=0$. The components $l_k$ of $\ell$ are equivalent by duality to maps $\Lambda^k\g\to\g$ which satisfy a collection of identities equivalent to the equation $[\ell,\ell]=0$. In particular, if $l_k=0$ for $k\neq 1,2$, we have a map $\g\to\g$ of degree 1 squaring to zero, and a map $\Lambda^2\g\to\g$ of degree 0 satisfying the Jacobi identity, plus a compatibility condition between the two. In other words, these data reduce to that of a dg Lie algebra (and if $l_1=0$ this further simplifies to a graded Lie algebra).

Thus, $L_\infty$-algebras generalize dg Lie algebras. An important subclass of $L_\infty$-algebras is formed by \emph{minimal} $L_\infty$-algebras, for which the component $l_1:\g^*[-1]\to \g^*[-1]$ vanishes.  We will write $\CE^\bullet(\g)$ for the  graded pc algebra $\hat{\huaS}(\g^*[-1])$ supplied with the differential $\ell$ and call it the representing pc dg algebra for the $L_\infty$-algebra $\g$. Note that this is consistent with our notations in the case when $\g$ is a dg Lie algebra.

It is natural to ask whether the results of this section hold true under the assumption that $\g$ is an $L_\infty$-algebra. A generalization of our methods to the $L_\infty$-case appears highly nontrivial, particularly because the methods we employed use in an essential way universal enveloping algebras of  graded Lie algebras.  For $L_\infty$-algebras, this notion is considerably more involved (see \cite{Bar, Mor, KH}). Furthermore, it is virtually certain that the formulas for the action of the universal enveloping algebra $\U(\g)$ of an $L_\infty$-algebra $\g$
 on the deformed Berezinian will include higher $A_\infty$-terms. Therefore,
we approach it from the Koszul dual point of view of dg modules over the pc dg algebra $\CE^\bullet(\g)$ (or, equivalently, comodules over the dual dg coalgebra $\CE_\bullet(\g)$).

The derivation $\ell$ of $\CE^\bullet(\g)$ has divergence $\nabla(\ell)$ which is a 1-cocycle in $\CE^\bullet(\g)$ (this is proved in the same way as for graded Lie algebras) and so, we can form the MC twist
$\CE_\bullet(\g)^{[\nabla(\ell)]}$ as a comodule over $\CE_\bullet(\g)$.

The following conjectural result is an $L_\infty$-version of Corollary \ref{cor:dualcomodule}.

\begin{con}\label{conjecture-comod}
	Let $(\g,\ell)$ be a finite dimensional $L_\infty$-algebra over the field $\bk$. Then the derived linear dual $d(\CE_\bullet(\g))$ of the dg  $\CE_\bullet(\g)$-comodule $\CE_\bullet(\g)$ is weakly equivalent to
the right dg  $\CE_\bullet(\g)$-comodule $\CE_\bullet(\g)^{[\nabla(\ell)]}[-|\HH^\bullet(\g)|]$.
\end{con}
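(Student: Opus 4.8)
\medskip

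The plan is to reduce the $L_\infty$-statement to the graded Lie algebra case already established in Corollary \ref{cor:dualcomodule} by means of a filtration/spectral sequence argument, exactly parallel to how Theorem \ref{the:1-dim} and Proposition \ref{prop:action} were handled on the module side, but now carried out on the Koszul dual side of $\CE_\bullet(\g)$-comodules. First I would equip $\CE^\bullet(\g)=\hat{\huaS}(\g^*[-1])$ with the weight filtration by polynomial degree; since $\g$ is minimal, $l_1=0$, so the induced differential on the associated graded is purely the quadratic part $l_2$, which is the CE differential of the graded Lie algebra $\bar\g$ obtained by forgetting all higher brackets $l_k$, $k\ge 3$. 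Dually, $\CE_\bullet(\g)$ acquires an exhaustive increasing filtration whose associated graded comodule is $\CE_\bullet(\bar\g)$. The derived linear dual functor $d(-)=\holim_\alpha(-)^*_\alpha$ is compatible with such admissible filtrations (one has to check that the homotopy limit over finite-dimensional subcomodules interacts well with the filtration; this is where the \emph{derived pseudocompact completion} of Section \ref{sec:strange assumption} enters, and it is the technical device that makes the spectral-sequence comparison legitimate), so one obtains a spectral sequence converging to $\HH^\bullet(d(\CE_\bullet(\g)))$ whose $E_1$-page is $\HH^\bullet(d(\CE_\bullet(\bar\g)))$.

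By Corollary \ref{cor:dualcomodule} applied to the graded Lie algebra $\bar\g$ (with $M=\bk$), this $E_1$-page is the cohomology of $\CE_\bullet(\bar\g)^{[\nabla(d_{\CE})]}[-|\bar\g|]$, and $|\bar\g|=|\g|$ since the invariant $|\g|$ depends only on the underlying graded vector space. One then checks that $\nabla(\ell)$ reduces, modulo higher filtration, precisely to $\nabla(l_2)=\nabla(d_{\CE})$, the CE $1$-cocycle of $\bar\g$ computing $\sum_q\str(\ad_{e_q})x^q$; this is the content of the identity $\nabla[\ell,\ell]=2\ell(\nabla\ell)=0$ together with the divergence formula, and it identifies the leading term of the twisting cocycle. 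The candidate weak equivalence $d(\CE_\bullet(\g))\simeq\CE_\bullet(\g)^{[\nabla(\ell)]}[-|\g|]$ is then a filtered map inducing an isomorphism on $E_1$, hence (by the admissibility of the filtrations and \cite[Section 6.10]{Pos}) a weak equivalence.

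The remaining — and genuinely hard — point is to construct the comparison map $d(\CE_\bullet(\g))\to\CE_\bullet(\g)^{[\nabla(\ell)]}[-|\g|]$ of $\CE_\bullet(\g)$-comodules in the first place, rather than merely matching abstract spectral sequences. On the graded Lie side this map came, via Koszul duality, from the identification of the dualizing complex $\bd=\CE^\bullet(\g,\U(\g))$ with $\bk[-|\g|]$ carrying the supertrace action, and that identification rested on the homological perturbation computation of Section \ref{sec:technical} which used the universal enveloping algebra in an essential way. For a general minimal $L_\infty$-algebra the relevant "enveloping object" is the pc dg algebra $\CE^\bullet(\g)$ itself together with its module $\CE^\bullet(\g)\hat\otimes\CE^\bullet(\g)$, and one expects the $A_\infty$-action on the deformed Berezinian to involve higher terms coming from $l_3, l_4,\dots$. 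The strategy is to show that all such higher corrections either vanish or are absorbed into a gauge equivalence of the MC element $\nabla(\ell)$ — i.e. that the higher $A_\infty$-terms only change the representing cocycle within its cohomology class, leaving the twisted comodule $\CE_\bullet(\g)^{[\nabla(\ell)]}$ unchanged up to isomorphism. Establishing that the higher-order obstructions are exact in $\CE^\bullet(\g)$ is, I expect, the main obstacle, and is presumably why Conjecture \ref{conjecture-comod} is stated as a conjecture: the partial results of Propositions \ref{prop:CEdual} and \ref{prop:CEdual'} prove it precisely under extra hypotheses that force those obstructions to disappear.
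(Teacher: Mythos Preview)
The statement you are attempting to prove is stated in the paper as a \emph{conjecture}, not a theorem; there is no proof in the paper to compare against. The paper explicitly says that a generalization of the methods of Section~\ref{sec:technical} to the $L_\infty$-case ``appears highly nontrivial,'' and only establishes the conjecture under the strong Hypothesis~\hyperlink{hyp:H}{(H)} (Propositions~\ref{prop:CEdual} and~\ref{prop:CEdual'}), using a method entirely different from yours: a direct computation of the derived profinite completion via Laurent-series duality and explicit cofibre sequences in the coderived category of pc $\CE^\bullet(\g)$-modules, with no spectral sequence or reduction to the associated graded Lie algebra.

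Your proposal contains a genuine gap, which you yourself identify in the final paragraph but underestimate in the first two. The step ``the derived linear dual functor $d(-)=\holim_\alpha(-)^*_\alpha$ is compatible with such admissible filtrations'' is not a technicality to be checked but the entire difficulty: homotopy limits over infinite diagrams do not in general interact well with exhaustive filtrations, and the derived pseudocompact completion of Section~\ref{sec:strange assumption} is only shown to compute $d(-)$ under the thickness hypothesis of Proposition~\ref{prop:derivedualcriterion}, which in turn is only verified under Hypothesis~\hyperlink{hyp:H}{(H)}. Without that, you have no spectral sequence for $d(\CE_\bullet(\g))$ with the desired $E_1$-page. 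Even granting such a spectral sequence, matching $E_1$-pages does not produce a weak equivalence unless you first construct a filtered comparison map of comodules---and, as you correctly note, constructing that map is exactly the obstruction: in the graded case it came from the explicit deformed Berezinian cocycle in $\CE^\bullet(\g,\U(\g))$, and there is no analogous object available for a general $L_\infty$-algebra. Your closing sentence is accurate: this is precisely why the statement remains a conjecture.
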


\begin{rem}
	It is likely that $L_\infty$-versions of other results in this section follow from Conjecture \ref{conjecture-comod}. For example, let $\g$ be a finite-dimensional $L_\infty$-algebra and $M$ be a finite-dimensional $L_\infty$-module over $\g$, which means that $M$ is a dg $\CE_\bullet(\g)$-comodule which is isomorphic to $\CE_\bullet(\g)\otimes V$ when the differential is disregarded and where $V$ is any finite-dimensional vector space. Such an $M$ corresponds by Koszul duality to a proper dg $\Omega(\CE_\bullet(\g))$-module $M^!$. Next, $\Omega(\CE_\bullet(\g))$ is a dg Hopf algebra, and so, by Proposition \ref{prop:duality}, we have $d(M^!)\simeq \Hom(M^!,\bd)$. Since $\bk$-linear Hom  of $\Omega(\CE_\bullet(\g))$-modules corresponds to $\CE^\bullet(\g)$-linear Hom of $\CE_\bullet(\g)$-comodules, we get
	\[
	d(M)\simeq {\RHom}_{\CE^\bullet(\g)}\big(M,d(\CE_\bullet(\g))\big).
	\]
	This is an $L_\infty$-analogue of Corollary \ref{cor:dualcomodule}. Indeed, by Conjecture \ref{conjecture-comod}, $d(\CE_\bullet(\g))$ is a cofree $\CE_\bullet(\g)$-comodule (with differentials disregarded) and we conclude that $d(M)$ has underlying $\CE_\bullet(\g)$-comodule  $\CE_\bullet(\g)[-|\HH^\bullet(\g)|]\otimes V^*$; the differential in it is suitably twisted.
\end{rem}
\section{Derivations and traces on algebras of formal Laurent series}\label{sec: Laurent}
In this section we introduce certain algebras of formal Laurent series and traces (or formal integrals) on them.

Let $\bk[[y]][y^{-1}]$ be the graded field of formal Laurent series in one variable $y$ of even degree $|y|\in\mathbb Z$ (note that unless $|y|=0$, this will, in fact, be a graded field of Laurent polynomials).  It admits   an inner product of graded degree $|y|$ 
\[
\langle f,g\rangle:=\res(fg)
\]
where $f,g\in \bk[[y]][y^{-1}]$ and $\res$ is the usual residue of a Laurent power series, the coefficient at $y^{-1}$. This inner product is nondegenerate; it exhibits $\bk[[y]]\subset \bk[[y]][y^{-1}]$ as dual vector space to $y^{-1}\bk[y^{-1}]\subset \bk[[y]][y^{-1}]$  and  $y^{-1}\bk[y^{-1}]$ as the \emph{continuous dual} to $\bk[[y]]$. This pairing is also invariant: $\langle fg,h\rangle=\langle f,gh\rangle$ for $f,g,h\in \bk[[y]][y^{-1}]$ and thus, is determined by a trace, or a Berezin integral  $f\mapsto \langle f,1\rangle$ on $\bk[[y]][y^{-1}]$.

An \emph{odd} analogue of $\bk[x]$ is the algebra $\Lambda(x)$; the exterior algebra on a generator $x$ of odd degree. This two-dimensional algebra also admits a nondegenerate pairing of degree $-|x|$  so that $\langle x,1\rangle=\langle 1,x\rangle=1$ and pairings of $1$ and $x$ with themselves are zero. This pairing, is invariant and so, is determined by a trace or a Berezin integral on $\Lambda(x)$.

All this generalizes straightforwardly to several dimensions; we denote by $\bk[[{\bf y}]][{\bf y}^{-1}]$ the following algebra:
\[
\bk[[{\bf y}]][{\bf y}^{-1}]:=
\bk[[y^1,\ldots,y^m]][({y^1})^{-1},\ldots,({y^m})^{-1}]
\]
where $y^i,i=1,\ldots, m$ are variables of even degree. $\bk[[{\bf y}]][{\bf y}^{-1}]$ is clearly a certain completion of the tensor product of $n$ copies of $\bk[[y^i]][{(y^i)}^{-1}], i=1,\ldots, m$. It inherits an invariant inner product: for $f,g\in \bk[[{\bf y}]][{\bf y}^{-1}]$ their inner product $\langle f,g\rangle$ is the coefficient at the monomial $({y^1})^{-1}\cdots (y^m)^{-1}$ of the product $fg$. {The degree of the inner product $\langle -,-\rangle$ is ${|\bf y|}:=|y^1|+\ldots+|y^m|$}. This inner product is similarly determined by a nondegenerate trace, or a formal integral on $\bk[[{\bf y}]][{\bf y}^{-1}]$.
 Similarly, \[{\bf \Lambda}({\bf x}):=\Lambda(x^1,\ldots, x^n)\]
 is the exterior algebra on the variables $x^i,i=1,\ldots, n$ of odd degree. As a product of graded Frobenius algebras, $\bf \Lambda(x)$, it is itself a graded Frobenius algebra whose inner product is determined by a trace or a Berezin integral on $\Lambda(x^i), i=1,\ldots, n$ and {the degree of the nondegenerate inner product is ${-|\bf x|}:=-(|x^1|+\ldots+|x^n|)$.}

Finally, it is clear that the tensor product $\bk[[{\bf y}]][{\bf y}^{-1}]\otimes {\bf \Lambda(x)}\cong\bk[[{\bf y}]][{\bf y^{-1}}][{\bf x}]$ also possesses a nondegenerate inner product or a trace (a formal integral). For $f\in \bk[[{\bf y}]][{\bf y}^{-1}]\otimes {\bf \Lambda(x)}$ we will write this trace as $f\mapsto\int f\in \bk$.

\begin{rem}
	Consider the subalgebra $\bk[[{\bf y}]]$ inside $\bk[[{\bf y}]][{\bf y}^{-1}]\otimes {\bf \Lambda(x)}$. It is clear that the given inner product pairs it nondegenerately with ${\bf y}^{-1}\bk[{\bf y^{-1}}]$, the subspace in $\bk[[{\bf y}]][{\bf y}^{-1}]$ consisting of linear combinations of monomials in $(y^i)^{-1}$ having at least one factor of each $(y^i)^{-1}$. Moreover, under this pairing, $\bk[[{\bf y}]]$ is identified with the linear dual of ${\bf y}^{-1}\bk[{\bf y^{-1}}]$  and the latter subspace is identified with the \emph{continuous dual} of $\bk[[{\bf y}]]$. Also, the restriction of $\langle,\rangle$ on the finite-dimensional space $\bf\Lambda(\bf x)$ is nondegenerate; more specifically, $\bf \Lambda(\bf x)$ is identified with the linear dual of ${\bf \Lambda}({\bf x})[|\bf x|]$.  Putting this together, we obtain an isomorphism
	\begin{equation}\label{Iso-coal-pc-alge}
	\big({\bf y}^{-1}\bk[{\bf y^{-1}}]\otimes{\bf \Lambda({\bf x}})\big)^*\cong \bk[[{\bf y}]][-|{\bf y}|]\otimes{\bf \Lambda({\bf x}})[|\bf x|]
	\end{equation}
	induced by the inner product $\langle,\rangle$. This implies, in particular, that  $\big({\bf y}^{-1}\bk[{{\bf y}^{-1}}]\otimes \Lambda({\bf x})\big)[|\bf x|-|\bf y|]$ has a graded coalgebra structure, obtained as the continuous dual of the pc algebra structure of $\bk[[{\bf y}]]\otimes\bf\bf{\Lambda(\bf x)}$.
\end{rem}
\emptycomment{
A $\bk$-linear derivation (or a vector field) $\xi$ of the algebra $\huaS_n\otimes \Lambda_k$ is determined by its values on the generators $x_i, y_l, i=1,\ldots, n, l=1,\ldots k$ and can therefore be written as a sum
\[
\xi=\sum_{i=1}^n f_i\frac{\partial}{\partial x_i}+\sum_{l=1}^kg_l\frac{\partial}{\partial y_l}
\]
where $f_i, g_l\in \bk[[{\bf x}]][{\bf x^{-1}}][{\bf y}]$. Here the degree of the vector field $\xi$ is $|f_i|-|x_i|=|g_l|-|y_l|$.

\begin{defi}
	The \emph{divergence} of a vector field $\xi$ is defined as
	\[\nabla(\xi)=\sum_{i=1}^n \frac{\partial f_i}{\partial x_i}+\sum_{l=1}^k(-1)^{|g_l|}\frac{\partial g_l}{\partial y_l}.\]
	One can observe that the degree of $\nabla(\xi)$ is $|\xi|$.
	It is easy to see that   $\nabla(\xi)$ is defined invariantly and that it obeys the following well-known rules for any vector field $\xi$ and $f\in \bk[[{\bf x}]][{\bf x^{-1}}][{\bf y}]$:
	
	\begin{equation}\label{eq:divergenceproperty}
		\nabla(f\xi)=f\nabla\xi+(-1)^{|f||\xi|}\xi(f).
	\end{equation}
\end{defi}
}
The following formal analogue of Stokes' theorem is well-known (and easy to prove).
\begin{prop}\label{Stokes-theorem}
	For any vector field $\xi$, one has $\int\nabla(\xi)=0$. Conversely, if $\int f=0$, then $f=\nabla(\xi)$ for some vector field $\xi$.
\end{prop}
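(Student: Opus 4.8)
The plan is to prove both assertions by reducing to one variable at a time. For the first, recall that $\nabla$ and $\int$ are $\bk$-linear and continuous, so it suffices to treat a monomial vector field $\xi=c\,\mu\,\partial_z$, where $\mu$ is a monomial in the $y^i$, $(y^i)^{-1}$, $x^j$ and $z$ is one of the generators; then $\nabla(\xi)=\pm c\,\partial_z(\mu)$ and the claim reduces to $\int\partial_z(g)=0$ for every $g\in\bk[[{\bf y}]][{\bf y}^{-1}]\otimes\Lambda({\bf x})$. Using that, as discussed above, $\int$ is the iterated composite of the residue maps $\res_{y^i}$ on the factors $\bk[[y^i]][(y^i)^{-1}]$ with the Berezin integrals on the factors $\Lambda(x^j)$, I would reduce this further to the two elementary one-variable facts: $\res\big(\partial_y(y^k)\big)=0$ for every $k\in\Z$ (since $\partial_y(y^k)=ky^{k-1}$ has no $y^{-1}$-term), and $\partial_x(1)=0$ and $\partial_x(x)=1$ both have vanishing Berezin integral. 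The only real care required is to track the Koszul signs produced when $\partial_z$ and the one-variable integrals are commuted past the interleaved even and odd variables.

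For the converse I would argue monomial by monomial. The monomials ${\bf y}^{\alpha}{\bf x}^{S}$, $\alpha\in\Z^m$, $S\subseteq\{1,\dots,n\}$, form a topological basis of $\bk[[{\bf y}]][{\bf y}^{-1}]\otimes\Lambda({\bf x})$, and by construction $\int$ reads off the coefficient of $\mu_0:=(y^1)^{-1}\cdots(y^m)^{-1}x^1\cdots x^n$; so $\int f=0$ means precisely that $f$ has trivial $\mu_0$-component. It therefore suffices to exhibit, for every monomial $\mu\neq\mu_0$, a monomial vector field $\xi_\mu$ with $\nabla(\xi_\mu)=\mu$, and then to set $\xi:=\sum_\mu c_\mu\,\xi_\mu$ for $f=\sum_\mu c_\mu\,\mu$; the sum converges in the same topology, whence $f=\nabla(\xi)$ by continuity of $\nabla$. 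Concretely, given $\mu={\bf y}^{\alpha}{\bf x}^{S}\neq\mu_0$: if $\alpha_j\neq-1$ for some $j$, take $\xi_\mu:=(\alpha_j+1)^{-1}\,{\bf y}^{\alpha+e_j}{\bf x}^{S}\,\partial_{y^j}$, where $e_j$ is the $j$-th standard vector of $\Z^m$, so that $\nabla(\xi_\mu)=\mu$ because $\partial_{y^j}({\bf y}^{\alpha+e_j})=(\alpha_j+1)\,{\bf y}^{\alpha}$; if instead $\alpha=(-1,\dots,-1)$, then $S\neq\{1,\dots,n\}$, so choose $i\notin S$ and take $\xi_\mu:=\pm\,(y^1)^{-1}\cdots(y^m)^{-1}\,{\bf x}^{S\cup\{i\}}\,\partial_{x^i}$, with the sign fixed so that $\nabla(\xi_\mu)=\mu$, using $\partial_{x^i}({\bf x}^{S\cup\{i\}})=\pm{\bf x}^{S}$.

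I do not expect a genuine obstacle: as the text says, the statement is elementary. The points needing attention, rather than insight, are the sign bookkeeping — the Koszul signs in the divergence formula, in reordering the odd variables $x^i$, and in the Fubini factorisation of $\int$ — and making the convergence assertion in the second part precise. In particular one should note that, although a single graded component of $\bk[[{\bf y}]][{\bf y}^{-1}]\otimes\Lambda({\bf x})$ need not be finite-dimensional, the primitives $\xi_\mu$ are chosen uniformly enough (a single generator-degree shift of $\mu$) that $\sum_\mu c_\mu\,\xi_\mu$ genuinely lies in the space of vector fields and $\nabla$ may be applied to it term by term.
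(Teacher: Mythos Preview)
Your argument is correct and complete in outline; the reductions and the explicit primitives you write down are exactly what is needed. Note that the paper does not actually prove this proposition: it is stated as ``well-known (and easy to prove)'' with no further argument, so your proposal supplies precisely the details the authors leave to the reader.
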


Given a vector field (a derivation) $\xi$ viewed as an operator on $\bk[[{\bf y}]][{\bf y^{-1}}][{\bf x}]$, one can ask for a formula for the \emph{adjoint operator} $\xi^*$ defined by the formula
\[
\langle \xi^*(f),g\rangle=-(-1)^{|\xi||f|}\langle f,\xi(g)\rangle
\]
where $f,g\in \bk[[{\bf y}]][{\bf y^{-1}}][{\bf x}]$.
\begin{prop}\label{prop:adjoint}
	Given a vector field $\xi$, its adjoint operator is given by the formula $\xi^*(f)=\xi(f)+\nabla(\xi)f$, where $ f\in \bk[[{\bf y}]][{\bf y^{-1}}][{\bf x}]$.
\end{prop}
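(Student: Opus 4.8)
The plan is to verify directly that the operator $f\mapsto \xi(f)+\nabla(\xi)f$ satisfies the defining identity of the adjoint, and then to conclude by nondegeneracy of the pairing $\langle-,-\rangle$ that this operator \emph{is} $\xi^*$. Concretely, I would fix homogeneous $f,g\in\bk[[{\bf y}]][{\bf y^{-1}}][{\bf x}]$ and aim to prove
\[
\int\big(\xi(f)+\nabla(\xi)f\big)g = -(-1)^{|\xi||f|}\int f\,\xi(g),
\]
i.e. $\langle \xi(f)+\nabla(\xi)f,\,g\rangle = -(-1)^{|\xi||f|}\langle f,\xi(g)\rangle$. Since this would hold for all $g$ and the inner product is nondegenerate, it pins down $\xi^*(f)$ uniquely, giving the formula of Proposition~\ref{prop:adjoint}.

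The key ingredient is a formal integration-by-parts identity: $\int\xi(h)=-\int\nabla(\xi)h$ for every $h$ in the algebra. I would derive it by applying Proposition~\ref{Stokes-theorem} to the vector field $h\xi$, which yields $\int\nabla(h\xi)=0$, and then expanding $\nabla(h\xi)=h\,\nabla(\xi)+(-1)^{|h||\xi|}\xi(h)$ by means of \eqref{eq:divergenceproperty}. Using graded commutativity of $\bk[[{\bf y}]][{\bf y^{-1}}][{\bf x}]$ to rewrite $\int h\,\nabla(\xi)=(-1)^{|h||\xi|}\int\nabla(\xi)h$ and recalling $|\nabla(\xi)|=|\xi|$, the two sign factors cancel and one is left with $\int\xi(h)=-\int\nabla(\xi)h$.

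With this lemma in hand, I would take $h=fg$, apply the Leibniz rule $\xi(fg)=\xi(f)g+(-1)^{|\xi||f|}f\,\xi(g)$ (noting that linearity of $\int$ lets the rule pass through unchanged), and obtain
\[
\int\xi(f)g+(-1)^{|\xi||f|}\int f\,\xi(g)=\int\xi(fg)=-\int\nabla(\xi)fg .
\]
Rearranging this gives exactly the identity $\int\big(\xi(f)+\nabla(\xi)f\big)g=-(-1)^{|\xi||f|}\int f\,\xi(g)$ sought above, and the proof is complete after invoking nondegeneracy.

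The computation is short and the only real care needed is the bookkeeping of Koszul signs — when moving $\nabla(\xi)$ past $f$ and $g$ inside $\int$, and when invoking the product rule for the divergence — so the main (minor) obstacle is checking that all these signs cancel as claimed, which they do precisely because $|\nabla(\xi)|=|\xi|$. Should the coordinate-free argument leave any doubt, an alternative is to reduce to the case where $\xi$ is a single term $f_i\,\tfrac{\partial}{\partial x_i}$ (respectively $g_l\,\tfrac{\partial}{\partial y_l}$) and $f,g$ are monomials, and to verify the identity by a direct residue computation; but I expect the argument above to be the cleaner route.
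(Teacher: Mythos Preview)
Your argument is correct and is essentially the same as the paper's: both apply Proposition~\ref{Stokes-theorem} to a vector field of the form $(\text{function})\cdot\xi$ (the paper takes $fg\xi$ directly, you first isolate the lemma $\int\xi(h)=-\int\nabla(\xi)h$ via $h\xi$ and then set $h=fg$), expand with \eqref{eq:divergenceproperty} and the Leibniz rule, and read off the adjoint from nondegeneracy. The sign bookkeeping you flag is handled correctly.
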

\begin{proof}
	Expand the expression $\nabla(fg\xi)$ taking into account Stokes' formula and (\ref{eq:divergenceproperty}). For any vector field $\xi$ and functions $f,g\in \bk[[{\bf y}]][{\bf y^{-1}}][{\bf x}]$, we have
	\begin{eqnarray*}
		\int\nabla(fg\xi)&\stackrel{\eqref{eq:divergenceproperty}}{=}&\int (fg\nabla\xi+(-1)^{(|f|+|g|)|\xi|}\xi(fg))\\
		&=&\int fg\nabla\xi+(-1)^{(|f|+|g|)|\xi|}\int\xi(f)g+(-1)^{|g||\xi|}\int f\xi(g)\\
		&=&(-1)^{|\xi||g|}\langle f\nabla\xi,g\rangle+(-1)^{(|f|+|g|)|\xi|}\langle\xi(f),g\rangle+(-1)^{|g||\xi|}\langle f,\xi(g)\rangle.
	\end{eqnarray*}
	By Proposition \ref{Stokes-theorem}, one can deduce that
	\begin{eqnarray*}
		\langle f,\xi(g)\rangle{=}-\langle f\nabla\xi,g\rangle-(-1)^{|\xi||f|}\langle\xi(f),g\rangle.
	\end{eqnarray*}
	Finally, by the definition of the adjoint operator of $\xi^*$, we obtain that
	\begin{eqnarray*}
		\langle \xi^*(f),g\rangle=(-1)^{|\xi||f|}\langle f\nabla\xi,g\rangle+\langle\xi(f),g\rangle
	\end{eqnarray*}
	which implies that $\xi^*(f)=\xi(f)+\nabla(\xi)f$ as claimed.
\end{proof}
Let us write $\bf n$ for the multi-index $n_1,\ldots, n_m$ and denote by $(\bf y^n)$ the ideal in $\bk[[{\bf y}]]$ generated by
the elements $y^{n_1},\ldots, y^{n_m}$.
We have a finite-dimensional $\bk[[{\bf y}]]\otimes\bf\Lambda(\bf x)$-module $\bk[[{\bf y}]]/({\bf y}^{\bf{n}})\otimes \bf\Lambda(\bf x)$. Then the following result holds.
\begin{cor}\label{cor:adjoint}
Let $\xi$ be a derivation of $\bk[[{\bf y}]]\otimes \bf\Lambda(\bf x)$ that preserves the ideal $(\bf y^n)$ (and so, restricts to $\bk[[{\bf y}]]/({\bf y^n})\otimes \bf\Lambda(\bf x))$. Then there is an isomorphism of graded vector spaces
\[
\big(\bk[[{\bf y}]]/({\bf y^n})\otimes {\bf\Lambda}({\bf x})\big)^*\cong \big(\bk[[{\bf y}]]/({\bf y^n})\otimes {\bf\Lambda}({\bf x})\big)[|{\bf x}|-|{\bf y}|].
\]
Under this isomorphism, the adjoint to the operator $\xi$ acting on $\big(\bk[[{\bf y}]]/({\bf y^n})\otimes {\bf\Lambda}(\bf x)\big)^*$, corresponds to $\xi+\nabla(\xi)$ acting on $\big(\bk[[{\bf y}]]/({\bf y^n})\otimes {\bf\Lambda}(\bf x)\big)[|{\bf x}|-|{\bf y}|]$.
\end{cor}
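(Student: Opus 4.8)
The plan is to realise the linear dual of the truncated module inside the Laurent--exterior algebra $W:=\bk[[{\bf y}]][{\bf y}^{-1}]\otimes{\bf\Lambda}({\bf x})$ equipped with its nondegenerate invariant pairing $\langle-,-\rangle$ (i.e.\ the trace $\int$), and then to deduce the result from Proposition \ref{prop:adjoint} by descending along a quotient map; this is the truncated, purely algebraic shadow of Corollary \ref{cor:dualcomodule}. Write $B:=\bk[[{\bf y}]]\otimes{\bf\Lambda}({\bf x})$ and let $\mathfrak a\subset B$ be the ideal generated by $({\bf y}^{\bf k})$, so that $E:=B/\mathfrak a$ is the module in question. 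By the truncated version of the remark containing \eqref{Iso-coal-pc-alge}, $\langle-,-\rangle$ identifies the continuous dual of $B$ with the span $B^\vee\subset W$ of the monomials $y^{\bf b}x^T$ with all $b_i\le-1$. A short monomial computation shows that the functionals in $B^\vee$ annihilating $\mathfrak a$ are exactly those lying in the finite-dimensional subspace $P^\vee$ spanned by the $y^{\bf b}x^T$ with $-k_i\le b_i\le-1$; hence $E^*\cong P^\vee$, and $\langle-,-\rangle$ restricts to a perfect pairing between $P^\vee$ and the span $P$ of the $y^{\bf a}x^S$ with $0\le a_i\le k_i-1$, which via $B\twoheadrightarrow E$ is a model for $E$. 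Because $\bk[[{\bf y}]]/({\bf y}^{\bf k})$ is a graded complete intersection, hence a Gorenstein (indeed Frobenius) algebra whose linear dual is a shift of itself, the assignment $y^{\bf b}x^T\mapsto\overline{y^{{\bf b}+{\bf k}}}\,x^T$ defines an isomorphism of $E$-modules $P^\vee\xrightarrow{\ \sim\ }E$; reading off the degrees then gives the graded isomorphism $E^*\cong E[|{\bf x}|-|{\bf y}|]$ asserted in the statement.

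For the action, extend $\xi$ to a continuous derivation $\hat\xi$ of $W$ by the quotient rule; because $\xi$ already maps $B$ to itself, the coefficient functions of $\hat\xi$ lie in $B$, so $\nabla(\hat\xi)=\nabla(\xi)$. By Proposition \ref{prop:adjoint} the $\langle-,-\rangle$-adjoint of $\hat\xi$ on $W$ is $\hat\xi+\nabla(\xi)\cdot$. Now for $\phi\in P^\vee$ and $c\in B$ one has $\langle\phi,\xi c\rangle=\langle\phi,\hat\xi c\rangle=-(-1)^{|\xi||\phi|}\langle(\hat\xi+\nabla(\xi)\cdot)\phi,\,c\rangle$; since a $\bk$-linear functional on $B$ only detects the $B^\vee$-component of an element of $W$, and such a functional annihilating $\mathfrak a$ only its $P^\vee$-component, the operator induced by $\xi$ on $E^*\cong P^\vee$ (the dual action) is $\phi\mapsto\pi_{P^\vee}\bigl((\hat\xi+\nabla(\xi)\cdot)\phi\bigr)$, with $\pi_{P^\vee}$ the projection of $W$ onto $P^\vee$. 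Transporting this along the $E$-module isomorphism of the previous paragraph, and using the Leibniz rule \eqref{eq:divergenceproperty} to rewrite the multiplication-by-$\nabla(\xi)$ part, one identifies it with the operator $\xi+\nabla(\xi)\cdot$ acting on $E[|{\bf x}|-|{\bf y}|]$, which is the claim.

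The delicate point is this last descent. One must keep track of the Koszul signs produced by the odd variables $x^i$ throughout; check that the components of $\hat\xi\phi$ and of $\nabla(\xi)\phi$ falling outside $P^\vee$ pair trivially with all of $E$, so that the projection $\pi_{P^\vee}$ introduces no error; and verify that the reflection isomorphism $P^\vee\cong E$ is compatible with both the $E$-module structure and the divergence --- bearing in mind that $\xi$ acting on the quotient $E$ is $\hat\xi$ followed by $B\twoheadrightarrow E$, not a literal restriction of $\hat\xi$ to a subspace of $W$. Once these compatibilities are nailed down, the argument reduces to the formal calculus of the trace $\int$, namely Stokes' formula (Proposition \ref{Stokes-theorem}) and Proposition \ref{prop:adjoint} themselves.
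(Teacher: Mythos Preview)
Your strategy coincides with the paper's: realise $E^*$ as the annihilator $P^\vee$ of $({\bf y}^{\bf k})$ inside $B^\vee\cong{\bf y}^{-1}\bk[{\bf y}^{-1}]\otimes{\bf\Lambda}({\bf x})$ via the residue pairing, invoke Proposition~\ref{prop:adjoint} on $W$ to describe the adjoint, and then transport along the $E$-module isomorphism $P^\vee\cong E[|{\bf x}|-|{\bf y}|]$. The paper's proof does exactly this in two sentences; you spell out the intermediate identifications, which is useful.

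The point you flag as ``delicate'' is, however, a genuine gap that cannot be closed under the hypothesis as stated. Your reflection map $\Phi\colon P^\vee\to E$, $\phi\mapsto\overline{{\bf y}^{\bf k}\phi}$, is an $E$-module isomorphism, but conjugating $\pi_{P^\vee}\circ(\hat\xi+\nabla(\xi))$ by it does \emph{not} in general produce $\xi+\nabla(\xi)$ on $E$: the Leibniz rule yields an extra contribution coming from $\hat\xi({\bf y}^{-{\bf k}})$, and nothing forces this to vanish modulo $({\bf y}^{\bf k})$. For a concrete counterexample take $m=1$, $n=0$, $k=2$, and $\xi=y^2\partial_y$ (with $|y|=0$, say). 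Then $\xi$ preserves $(y^2)$ but acts as zero on $E=\bk[y]/(y^2)$, so $\xi^*=0$ on $E^*$; yet $\nabla(\xi)=2y$, and $\xi+\nabla(\xi)$ acts nontrivially on $E$, sending $1\mapsto 2y$. No isomorphism $E^*\cong E[\text{shift}]$ can intertwine the zero operator with a nonzero one. The paper's one-line ``follows from Proposition~\ref{prop:adjoint}'' glosses over the same issue.

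What actually makes the transport go through is the stronger condition $\xi(y^i)=0$ for all $i$, so that $\hat\xi({\bf y}^{-{\bf k}})=0$ and $\Phi$ literally intertwines $\hat\xi+\nabla(\xi)$ with $\xi+\nabla(\xi)$. This is precisely Hypothesis~(H), and it is the only setting in which the paper ever applies the corollary (in Proposition~\ref{prop:CEdual'}). With that extra assumption your argument is complete and matches the paper's; without it, neither proof works and the statement itself fails.
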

\begin{proof}
	We have an isomorphism of $\bk[[{\bf y}]]\otimes \bf\Lambda(\bf x)$-modules:
	\begin{eqnarray*}
\big(\bk[[{\bf y}]]\otimes {\bf \Lambda}(\bf x)\big)^*&\stackrel{\eqref{Iso-coal-pc-alge}}{\cong}& \big({\bf y}^{-1}\bk[{\bf y}^{-1}]\otimes {\bf\Lambda}(\bf x)\big)[|{\bf x}|-|{\bf y}|].
		\end{eqnarray*}
	It follows that $\big(\bk[[{\bf y}]]/({\bf y^n})\otimes {\bf\Lambda}(\bf x)\big)^*$ is isomorphic to the submodule in $\big({\bf y}^{-1}\bk[{\bf y}^{-1}]\otimes {\bf\Lambda}(\bf x)\big)[|{\bf x}|-|{\bf y}|]$ annihilated by $(\bf y^n)$. This submodule is identified with $\big(\bk[[{\bf y}]]/({\bf y^n})\otimes {\bf\Lambda}(\bf x)\big)[|{\bf x}|-|{\bf y}|]$ and the claim about the action of the adjoint of $\xi$ follows from Proposition \ref{prop:adjoint}.
\end{proof}

\section{Derived linear duality for comodules over a coalgebra}\label{sec:strange assumption}
In this section we develop some methods for the computation of a derived linear dual $d(M)$ of a dg comodule $M$ over a dg coalgebra $C$.

One way to do it (used in Section \ref{sec:technical}) is to pass from $M$ to the Koszul dual module $M^!$ over the cobar-construction $\Omega(C)$ and use the fact that $d(M)$ corresponds to $\RHom_{\Omega (C)}(M^!,\Omega(C))$; if $C$ is the dg coalgebra $\B(A)$ or $\CE_\bullet(\g)$ for a dg algebra $A$ or a dg Lie algebra $\g$ respectively, then one can use $A$ or $\U(\g)$ in place of $\Omega(C)$. In this section we discuss ways of computing $d(M)$ directly, staying in the model category of dg $C$-comodules.

It will be convenient to switch from $C$-comodules to pc $A:=C^*$-modules.
Note that the category of pc $A$-modules is anti-equivalent to the category of $C$-comodules and so, it has a model structure whose homotopy category is anti-equivalent to that of $D^{\co}(C)$. We will denote this homotopy category by $D^{\co}(A)$ and refer to it as the coderived category of $A$. Let $M$ be a dg $C$-comodule. Then one can assume that $M$ is the filtered colimit of its finite-dimensional dg sub-comodules: $M=\colim_{\alpha}M_\alpha$ with $\dim M_\alpha<+\infty$.
Homotopy limits of a $C$-comodules correspond to homotopy colimits of the dual pc $A$-modules and so we are interested in computing $(\hocolim_{\text{pc}})_{\alpha}N_\alpha$. Here $N_\alpha:=M_\alpha^*$ is the pc $A$-module which is  linearly dual (in the ordinary, naive sense) to the finite-dimensional comodule $M_\alpha$. Then the  derived linear dual of $M$ is $d(M)=\big((\hocolim_{\text{pc}})_{\alpha}N_\alpha\big)^{*}$.

To this end, let us introduce the notion of a derived profinite completion. The category of pc $A$-modules has a forgetful functor $U$ into the category of discrete $A$-modules. It takes weak equivalences of pc $A$-modules into quasi-isomorphisms and preserves fibrations (which are in both cases surjective maps). It also has a left adjoint, the profinite completion functor
$M\mapsto M^\wedge:=\lim_\alpha M_\alpha$ where the inverse limit is taken over all finite-dimensional $A$-module quotients $M_\alpha$ of $M$. It follows that the forgetful functor $U$ is right Quillen and the profinite completion is left Quillen. We will denote its left derived functor as $M\mapsto {\huaL^\wedge}(M)$; it can be computed by replacing $M$ by a quasi-isomorphic cofibrant $A$-module $LM$, and then applying the profinite completion functor. Note that although the counit of the adjunction
$M\to M^{\wedge}$
is an isomorphism of pc modules (completing a pc module amounts to doing nothing), it is not true that the \emph{derived} counit map $\huaL^\wedge (M)\to M$  is a weak equivalence because a cofibrant pc $A$-module need not be cofibrant as a discrete $A$-module.
\begin{example}\label{ex:acyclicmodule}
	Let $A:=\Lambda(x)$, the exterior algebra on one generator $x$ with $|x|=1$ and zero differential. Let $M$ be the free $A$-module of rank $1$ but with the nontrivial differential $d(1)=x$. As a pc $A$-module, $M$ is cofibrant and it is nontrivial, e.g. $M\otimes^{L,\text{II}}_A\bk \cong \bk\neq 0$ where $\otimes^{L, \text{II}}_A$ stands for the derived tensor product on pc $A$-modules. At the same time, as a discrete dg $A$-module, $M$ is not cofibrant, and its cofibrant replacement is zero since $M$ is acyclic. Thus, ${\huaL^\wedge} (M)\simeq 0$.
\end{example}

However, if (the weak equivalence class of) the pc $A$-module $M$ lies in the thick subcategory of the coderived category of  pc $A$-modules generated by $A$ itself, this difficulty does not arise. Moreover, let $\huaTT$ be a triangulated category and $X$ be a set
of objects of $\huaTT$.  We denote $\huathick_{\huaTT}(X)$ the thick subcategory of $\huaTT$ generated by $X$, which is the smallest full triangulated subcategory
of  $\huaTT$ containing $X$ and closed under direct summands. It can be obtained as the
closure of $X$ under shifts, cones and direct summands.
\begin{lem}\label{lem:thick}
	Let $M$ be a pc module over a pc algebra $A$ that lies in
	$\huathick_{\text{D}^{\co}(A)}(A)$.
	Then $\huaL^\wedge (M)\simeq M$.
\end{lem}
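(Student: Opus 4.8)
The plan is to reduce the statement to the case $M=A$ by dévissage along the triangulated structure. First I would observe that the derived profinite completion functor $\huaL^\vee$ and the forgetful functor $F$ together induce a natural transformation: for any pc $A$-module $M$ there is a derived counit map $\epsilon_M\colon \huaL^\vee(M)\to M$ in $D^{\co}(A)$, obtained by replacing $M$ by a cofibrant pc $A$-module and applying the (underived) profinite completion, which is the identity on that representative; the content of the lemma is that $\epsilon_M$ is a weak equivalence precisely when $M$ lies in $\huathick_{D^{\co}(A)}(A)$. So the proof is really a proof that the full subcategory of $D^{\co}(A)$ on which $\epsilon_M$ is an isomorphism is a thick subcategory containing $A$.

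Next I would check the two ingredients of thickness. For the base case $M=A$: a cofibrant replacement of $A$ as a pc $A$-module can be taken to be $A$ itself (the free rank-one pc module is cofibrant in the pc model structure), and its profinite completion is again $A$, so $\epsilon_A$ is an isomorphism. For closure under shifts this is immediate since all functors involved are triangulated (exact) on the homotopy categories. For closure under cones: given a morphism $f\colon M\to N$ in $D^{\co}(A)$ with $\epsilon_M$ and $\epsilon_N$ weak equivalences, the functor $\huaL^\vee$ sends the distinguished triangle $M\to N\to \operatorname{cone}(f)\to M[1]$ to a distinguished triangle, and the counit maps assemble into a morphism of triangles; by the five lemma in the triangulated category $D^{\co}(A)$, $\epsilon_{\operatorname{cone}(f)}$ is also a weak equivalence. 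For closure under direct summands: if $M\oplus M'$ has invertible counit, then since $\huaL^\vee$ and $F$ are additive the counit of $M\oplus M'$ is the direct sum of the counits of $M$ and $M'$, and a direct summand of an isomorphism (in an idempotent-complete, in particular triangulated, category) is an isomorphism, so $\epsilon_M$ is a weak equivalence. Combining these, the subcategory of objects with invertible counit is thick and contains $A$, hence contains $\huathick_{D^{\co}(A)}(A)$, which gives the claim.

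The main obstacle, and the step I would spend the most care on, is setting up the derived counit $\epsilon_M$ correctly and checking that $\huaL^\vee$ is genuinely triangulated (exact) on the coderived category — i.e. that it sends distinguished triangles to distinguished triangles. This requires knowing that the profinite completion functor, being left Quillen (as noted in the text just before the lemma), descends to an exact functor between the homotopy categories, which in turn uses that $D^{\co}(A)$ is obtained from a stable model category so that its homotopy category is triangulated and left-derived left Quillen functors are exact. One subtlety to address is Example \ref{ex:acyclicmodule}: there $M$ is a cofibrant pc module that is \emph{not} in $\huathick_{D^{\co}(A)}(A)$ (it is coacyclic, hence zero in the coderived category, yet $\huaL^\vee(M)\simeq 0$ while $M\not\simeq 0$ as a pc module before passing to $D^{\co}$) — so one must be careful that the statement $\huaL^\vee(M)\simeq M$ is an equivalence \emph{in $D^{\co}(A)$}, which is exactly the setting in which the five-lemma argument takes place, and there is no contradiction. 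Once these foundational points are in place, the dévissage is routine.
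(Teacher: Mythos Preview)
Your d\'evissage strategy is exactly the paper's: verify the base case $M=A$, then close under shifts, cones and summands. However, your setup of the derived counit $\epsilon_M$ is off in a way that would trivialize the lemma. You write that $\epsilon_M$ is ``obtained by replacing $M$ by a cofibrant pc $A$-module and applying the (underived) profinite completion, which is the identity on that representative.'' But the profinite completion is left adjoint to the forgetful functor $F$ from pc to \emph{discrete} $A$-modules, so $\huaL^\vee$ is computed by cofibrantly replacing $F(M)$ in the \emph{discrete} model category and then completing. With your description, $\epsilon_M$ would be an isomorphism for every $M$, since the underived counit is always the identity on pc modules; this cannot be right. The paper's base case accordingly reads: $A$ is cofibrant as a \emph{discrete} module over itself (free of rank one), so no replacement is needed and $\huaL^\vee(A)\simeq A$. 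Cofibrancy in the pc model structure is irrelevant here.

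Relatedly, you have Example~\ref{ex:acyclicmodule} backwards. That $M$ is acyclic but \emph{not} coacyclic: it is genuinely nontrivial in $D^{\co}(A)$ (the paper checks $M\otimes_A^{L,\mathrm{II}}\bk\cong\bk$), while $\huaL^\vee(M)\simeq 0$ because its cofibrant replacement as a discrete module is zero. So the derived counit fails to be an equivalence in $D^{\co}(A)$ for this $M$, which is precisely why the hypothesis $M\in\huathick_{D^{\co}(A)}(A)$ is needed. Once you correct the construction of $\epsilon_M$, the thick-subcategory argument goes through and matches the paper verbatim.
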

\begin{proof} Note that, since the forgetful functor from pc $A$-modules to discrete $A$-modules preserves finite direct sums, homotopy fibre sequences and retracts, $M$ lies in $\huathick_{{D}(A)}(A)$ viewed as a \emph{discrete} $A$-module.
	The statement of the lemma is obvious if $M=A$ since  $A$ is cofibrant as a discrete module over itself. The general case then follows since the functor $\huaL^\wedge$ preserves finite direct sums, homotopy fibre sequences and retracts.
\end{proof}

Let us now consider the case when a pc dg algebra $A$ is finite-dimensional. Admittedly, this is not the case especially relevant to this paper since universal enveloping algebras are typically infinite-dimensional; however this is when derived profinite completion admits a very explicit description and so we record here the relevant conclusions.

The following proposition describes the derived completion functor on $A$-modules. Recall that we denoted by $U$ the forgetful functor from pc $A$-modules to $A$-modules.

\begin{prop}\label{prop:doubledual}
	For any dg module $M$ over a finite-dimensional dg algebra $A$ we have a natural isomorphism
	\begin{equation}\label{eq:doubledual}
		M^\wedge\cong \big(U(M^*)\big)^*.
	\end{equation}
\end{prop}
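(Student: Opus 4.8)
The plan is to unwind both sides of the claimed isomorphism \eqref{eq:doubledual} and observe that they describe the same construction, using finite-dimensionality of $A$ in an essential way. First I would recall the setup: on the left, $M$ is a discrete dg $A$-module, and $M^\vee=\lim_\alpha M_\alpha$ is the inverse limit over all finite-dimensional $A$-module quotients $M_\alpha$ of $M$, equipped with its natural pc $A$-module structure. On the right, we first form the linear dual $M^*$, which since $A$ is finite-dimensional is naturally a pc $A$-module (in fact $M^*$ is a pc $A$-module whose finite-dimensional quotients are exactly the duals of the finite-dimensional sub-$A$-modules of $M$); then $F(M^*)$ forgets the topology, remembering only the discrete $A$-module structure on the underlying vector space of $M^*$; and finally $\big(F(M^*)\big)^*$ takes the linear dual again, which carries a pc $A$-module structure because $A$ is finite-dimensional.

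The key step is to identify the finite-dimensional $A$-module quotients appearing in $M^\vee$ with a cofinal system coming from $M^*$. Concretely, finite-dimensional quotients $M\twoheadrightarrow M_\alpha$ correspond, under linear duality, to finite-dimensional sub-$A$-modules $M_\alpha^*\hookrightarrow M^*$; since $M^*$ is the union (filtered colimit) of its finite-dimensional sub-$A$-modules, we get $M^*\cong\colim_\alpha M_\alpha^*$ as discrete $A$-modules, i.e. $F(M^*)\cong\colim_\alpha M_\alpha^*$. Dualizing and using that linear duality converts filtered colimits of finite-dimensional spaces into cofiltered limits of finite-dimensional spaces, we obtain $\big(F(M^*)\big)^*\cong\lim_\alpha (M_\alpha^*)^*\cong\lim_\alpha M_\alpha\cong M^\vee$. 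One must check that all these identifications respect the $A$-module (resp.\ pc $A$-module) structures, which is routine given that $A$ is finite-dimensional so that every dualization in sight is compatible with the $A$-actions, and that the double-dual map $M_\alpha\to(M_\alpha^*)^*$ is an isomorphism of $A$-modules for finite-dimensional $M_\alpha$.

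The only genuine subtlety — the step I would flag as the main obstacle, though it is more bookkeeping than real difficulty — is making precise that $F(M^*)\cong\colim_\alpha M_\alpha^*$ as \emph{discrete} $A$-modules, i.e.\ that forgetting the topology on the pc module $M^*=\lim_\alpha M_\alpha^*$ genuinely yields the underlying vector space, which is $\colim_\alpha M_\alpha^*$ because $M^*$ as a vector space is $\Hom_\bk(M,\bk)$ and every linear functional on $M$ factors through some finite-dimensional quotient precisely when... (in general this is false, $\Hom_\bk(M,\bk)$ is larger than $\colim M_\alpha^*$ when $M$ is infinite-dimensional). Here one uses that we are working with $A$-modules: a functional need not factor through a finite-dimensional quotient, but the point is that $M^*$ as a \emph{pc} $A$-module is $\lim_\alpha M_\alpha^*$ by definition of the duality anti-equivalence $\DGVect\leftrightarrow\DGVectpc$, and its underlying topological vector space, forgotten to a plain vector space, is exactly this limit; the forgetful functor $F$ to discrete modules is then defined so that $F(M^*)$ has underlying space $\colim_\alpha M_\alpha^*$ — this is the content of $F$ being left adjoint to profinite completion. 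So in fact the proposition is essentially the statement that the composite $\big(F((-)^*)\big)^*$ computes profinite completion, and the proof amounts to tracing through these adjunctions carefully.
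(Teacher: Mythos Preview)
Your overall architecture matches the paper's exactly: both reduce the claim to the identification of the discrete $A$-module $F(M^*)$ with $\colim_\alpha M_\alpha^*$, where $M_\alpha$ ranges over finite-dimensional $A$-module quotients of $M$. You correctly flag this as the crux. But your resolution of it is where the argument goes wrong.

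First, two slips: $F$ is \emph{right} adjoint to profinite completion (the paper says completion is left Quillen), not left; and $F$ simply forgets the topology, so $F(M^*)$ has underlying vector space the full linear dual $\Hom_\bk(M,\bk)$, not $\colim_\alpha M_\alpha^*$ by fiat. The identification you need is therefore not a formal consequence of any adjunction; it is the honest statement that $\Hom_\bk(M,\bk)=\bigcup_\alpha M_\alpha^*$, i.e.\ that every linear functional on $M$ really does lie in $(M/K)^*$ for some $A$-submodule $K$ of finite codimension. You explicitly assert this fails (``a functional need not factor through a finite-dimensional quotient'') and then try to route around it, but in fact it \emph{holds}, and this is precisely where finite-dimensionality of $A$ enters. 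Given $f\in M^*$, the $A$-submodule $A\cdot f\subset M^*$ is finite-dimensional because $A$ is; equivalently, $K:=\bigcap_{a\in A}\ker(a\cdot f)$ is an $A$-submodule of $M$ of finite codimension and $f$ vanishes on it. Thus every $f$ lies in some $M_\alpha^*$, giving $F(M^*)=\bigcup_\alpha M_\alpha^*$. The paper phrases this as ``since $A$ is finite-dimensional, any dg $A$-module is a union of its finite-dimensional submodules,'' applied to $M^*$. Once you supply this one-line argument in place of your third paragraph, your proof is complete and coincides with the paper's.
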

\begin{proof}
	Note that the functor of linear duality $M\mapsto M^*$ determines an equivalence of categories $$A\MMod\to ({\text {pc}}~A\MMod)^{\op}$$
	with the quasi-inverse functor being the continuous dual $N\mapsto N^*$ where $N$ is a dg pc $A$-module. (Here finite-dimensionality of $A$ is used: if $\dim A=\infty$ then $M^*$ is not necessarily a pc $A$-module.)
	
	Thus, to demonstrate (\ref{eq:doubledual}), it suffices to show that $(M^\wedge)^*\cong M^*$.
	We have
	
	\[
	(M^\wedge)^*\cong\varinjlim (M_\alpha)^*
	\]
	where $M_\alpha$ range over all finite-dimensional dg $A$-module quotients of $M$ and thus, $M_\alpha^*$ range over all finite-dimensional dg $A$-submodules of $M^*$. Since $A$ is finite-dimensional, any dg $A$-module is a nested union of its finite-dimensional submodules and we are done.
\end{proof}
Given a dg $A$-module $M$, denote by $\widetilde{M}$ the cofibrant replacement of $M$. Then the following result is immediate from Proposition \ref{prop:doubledual}.
\begin{cor}
	For a dg module $A$ over a finite-dimensional dg algebra $A$, its derived profinite completion ${\bf L}^\wedge(A)$ is weakly equivalent to the dg pc $A$-module $(U(\widetilde{M})^*)^*$.
\end{cor}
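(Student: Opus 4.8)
The plan is simply to unwind the definition of the left derived functor ${\bf L}^\vee$ and then feed a cofibrant replacement into Proposition \ref{prop:doubledual}. Recall that, as explained just before the corollary, ${\bf L}^\vee(M)$ is computed by first replacing the discrete dg $A$-module $M$ by the quasi-isomorphic cofibrant module $\widetilde{M}$ (in the projective model structure on $A\MMod$) and then applying the underived profinite completion functor $(-)^\vee$; thus ${\bf L}^\vee(M)$ is weakly equivalent to $(\widetilde{M})^\vee$ as a pc dg $A$-module.

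Now apply Proposition \ref{prop:doubledual} with the module there taken to be $\widetilde{M}$: since $A$ is finite-dimensional, this gives a natural isomorphism $(\widetilde{M})^\vee\cong\big(F((\widetilde{M})^*)\big)^*$ of pc dg $A$-modules. Combining the two identifications yields ${\bf L}^\vee(M)\simeq\big(F((\widetilde{M})^*)\big)^*$, which is the assertion.

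There is essentially no obstacle here; the corollary is a formal consequence of Proposition \ref{prop:doubledual} together with the definition of a left derived functor via cofibrant replacement. The only point worth stressing is that the cofibrant replacement genuinely cannot be dropped: as Example \ref{ex:acyclicmodule} shows, a module that is cofibrant as a pc $A$-module need not be cofibrant as a discrete $A$-module, so the derived counit ${\bf L}^\vee(M)\to M$ is in general not a weak equivalence and the formula really does depend on $\widetilde{M}$ rather than on $M$ itself.
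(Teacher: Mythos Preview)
Your proof is correct and is exactly the argument the paper has in mind: the corollary is stated there as ``immediate from Proposition \ref{prop:doubledual}'', and you have simply spelled out that immediacy by taking a cofibrant replacement and applying the proposition. Your remark about Example \ref{ex:acyclicmodule} is also on point and matches the surrounding discussion.
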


The coderived category of pc dg  $A$-modules is related to the opposite to the derived category of $A$ (not necessarily assumed finite-dimensional):
\begin{prop}\label{prop:deriveddoubledual}
	The functor $M\mapsto M^*:A\MMod\to ({\text {pc}}~A\MMod)^{\op}$ is a left Quillen functor whose adjoint right Quillen functor is the continuous dual $N\mapsto N^*$. The latter preserves arbitrary weak equivalences (and thus, coincides with its own left derived functor). The derived unit map $M\to \left((\widetilde{M})^*\right)^*$ is a quasi-isomorphism.
\end{prop}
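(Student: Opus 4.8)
The plan is to establish the three assertions in turn: (i) the adjunction; (ii) that it is a Quillen adjunction; and (iii) that continuous duality preserves \emph{all} weak equivalences, from which the claim about the derived unit follows formally. For (i), finite-dimensionality of $A$ is used precisely so that the linear dual $M^*$ of a dg $A$-module $M$ is again a pc dg $A$-module: writing $M=\colim_\alpha M_\alpha$ as the filtered colimit of its finite-dimensional $A$-submodules (every finite-dimensional subspace lies in one of these because $\dim A<\infty$) gives $M^*=\lim_\alpha M_\alpha^*$, an inverse limit of finite-dimensional $A$-modules. Then for $N=\lim_\beta N_\beta$ a pc dg $A$-module, a continuous $A$-module map $N\to M^*$ is the same as a compatible family $N\to M_\alpha^*$, hence by finite-dimensional duality a compatible family $M_\alpha\to N^*$, hence an $A$-module map $M\to N^*$; this yields the natural bijection between continuous $A$-module maps $N\to M^*$ and $A$-module maps $M\to N^*$, i.e. the adjunction $A\MMod\rightleftarrows({\text{pc}}~A\MMod)^{\op}$ with $(-)^*$ on the left and continuous duality $(-)^*$ on the right (the transposition of left and right modules being left implicit).

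For (ii), recall that along the anti-equivalence between pc dg $A$-modules and dg $C$-comodules, $C:=A^*$, transporting the coderived model structure, all monomorphisms of $C$-comodules are cofibrations; hence the fibrations of pc dg $A$-modules are exactly the surjections, and the trivial fibrations are the surjections with coacyclic kernel. Since the projective model structure on $A\MMod$ is cofibrantly generated and $(-)^*$, as a functor to $({\text{pc}}~A\MMod)^{\op}$, preserves colimits (it carries colimits of $A$-modules to limits of pc modules), it is enough to check that $(-)^*$ sends the standard generating cofibrations and trivial cofibrations of $A\MMod$ into cofibrations, respectively trivial cofibrations, of $({\text{pc}}~A\MMod)^{\op}$. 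But $(-)^*$ is exact and turns monomorphisms into epimorphisms, so it sends a generating cofibration to a surjection of pc modules, which is a fibration of pc modules and thus a cofibration of $({\text{pc}}~A\MMod)^{\op}$; and it sends a generating trivial cofibration $0\to D$, with $D$ a contractible free $A$-module, to the surjection $D^*\to 0$ whose kernel $D^*$ is again contractible, hence coacyclic, so this is a trivial fibration of pc modules and thus a trivial cofibration of $({\text{pc}}~A\MMod)^{\op}$. Hence $(-)^*$ is left Quillen with right adjoint continuous duality.

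For (iii), a weak equivalence in $({\text{pc}}~A\MMod)^{\op}$ is a weak equivalence of pc dg $A$-modules, which corresponds under the anti-equivalence above to a weak equivalence of $C$-comodules, i.e. a map with coacyclic cone; since coacyclic complexes are acyclic, such a map is a quasi-isomorphism, and its image under continuous duality is precisely that quasi-isomorphism of comodules (via the canonical isomorphism $(D^*)^*\cong D$). So continuous duality sends all weak equivalences to quasi-isomorphisms and therefore computes its own total derived functor with no fibrant replacement required. It follows that, for a cofibrant replacement $\widetilde M\xrightarrow{\sim}M$ in $A\MMod$, the derived unit of the Quillen adjunction at $M$ is represented in $D(A)$ by the ordinary unit map $\widetilde M\to\big((\widetilde M)^*\big)^*$ (composed with the inverse of $\widetilde M\xrightarrow{\sim}M$). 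But for any dg vector space $V$ the biduality map $V\to(V^*)^*$ (linear dual then continuous dual) is an isomorphism, since linear and continuous duality are mutually inverse anti-equivalences between $\DGVect$ and $\DGVectpc$; taking $V=\widetilde M$ shows this unit map is an isomorphism of complexes, a fortiori a quasi-isomorphism, which is the last assertion.

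The parts I expect to require the most care are in (ii)–(iii): tracking the opposite categories so that ``cofibration versus fibration'' is matched correctly across the contravariant equivalence between pc $A$-modules and $C$-comodules, and keeping in mind that the weak equivalences of the coderived model structure form a strictly smaller class than all quasi-isomorphisms — so the single point that makes continuous duality homotopically well behaved is exactly that coacyclic complexes are acyclic.
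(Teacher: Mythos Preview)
Your proof is correct and follows essentially the same strategy as the paper's (very brief) argument: establish the Quillen adjunction, observe that continuous duality preserves all weak equivalences because coacyclic implies acyclic, and deduce the derived unit statement from ordinary biduality $V\cong (V^*)^*$. The only methodological difference is that you verify the Quillen property on the \emph{left} side (checking that linear duality sends generating (trivial) cofibrations of $A\MMod$ to (trivial) cofibrations of $({\text{pc}}~A\MMod)^{\op}$), whereas the paper verifies it on the \emph{right} side (continuous duality preserves fibrations and weak equivalences); these are standard dual checks, and your version has the minor advantage of making the identification ``fibrations in pc $A$-modules are surjections'' explicit rather than relying on the less transparent description of cofibrations there.
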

\begin{proof}
	The functor $N\mapsto N^*:({\text {pc}}~A\MMod)^{\op}\to A\MMod$ preserves quasi-isomorphisms and thus, \emph{a fortiori}, takes weak equivalences of pc dg $A$-modules to quasi-isomorphisms of $A$-modules. It also takes fibrations in $({\text {pc}}~A\MMod)^{\op}$ (which are injective maps) to surjections (i.e. fibrations in $\MMod-A$). Thus, it is a right Quillen functor and its adjoint is left Quillen. The statement about the derived unit map is likewise clear.
\end{proof}

\begin{rem}
	The Quillen adjunction of Proposition \ref{prop:deriveddoubledual} is \emph{not} a Quillen equivalence despite being an equivalence of 1-categories (in particular, it does not follow that the homotopy categories of $A\MMod$ and $({\text {pc}}~A\MMod)^{\op}$ are equivalent). Indeed, the module $M$ of Example \ref{ex:acyclicmodule} is not weakly equivalent to zero as a pc $\Lambda$-module, but is acyclic and so is its (continuous) dual $M^*$, which makes him trivial in the derived category of $\Lambda$.
\end{rem}	

Next, note that any dg comodule $M$ over a dg coalgebra $C$ naturally has the structure of a $C^*$-module (though not necessarily pc). This is clear when $M$ is finite-dimensional; in this case the coaction map $M\to C\otimes M$ is equivalent to a continuous dg algebra map $C^*\to\End(M^*)\cong \End(M)$. The general case follows since $M$ is a union of its finite-dimensional subcomodules.
\begin{example}
	Consider the following short exact sequence of $\bk[[x]]$-modules
	\[
	\bk[[x]]\to  \bk[[x]][x^{-1}]\to x^{-1}\bk[x^{-1}].
	\]
	Here the $\bk[[x]]$-module structure on $x^{-1}\bk[x^{-1}]$ is obtained by the isomorphism $x^{-1}\bk[x^{-1}]\cong \bk[[x]][x^{-1}]/\bk[[x]]$. Furthermore, $x^{-1}\bk[x^{-1}]$ is a coalgebra whose linear dual is the pc algebra $\bk[[x]]$ (recall that there is a pairing between $x^{-1}\bk[x^{-1}]$ and $\bk[[x]]$ given by the residue of the product of two power series). As such, it is a comodule over itself and also a non-pc $\bk[[x]]$-module.
\end{example}

The following result gives a description of the derived linear dual of a comodule in favorable cases.
\begin{prop}\label{prop:derivedualcriterion}
	Let $C$ be a dg coalgebra and $M$ be a $C$-comodule. Assume that $M=\bigcup_\alpha M_\alpha$, the nested union of finite-dimensional $C$-subcomodules $M_\alpha$ which lie in the thick subcategory of the coderived category of $C$-comodules generated by $C$.  Then
	$$d(M)\underset{\text{w.e.}}{\simeq} \big(\huaL^\wedge (M)\big)^{*}.$$
	In other words, we  view $M$ as a $C^*$-module, take its derived profinite completion (making it into a pc $C^*$-module) and then its continuous dual, converting back to a $C$-comodule which is weakly equivalent to the derived linear dual of $M$.	
\end{prop}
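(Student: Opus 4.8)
The plan is to reduce, using the computation of $d(M)$ explained just before the statement, to the assertion that the derived profinite completion $\huaL^\vee(M)$ coincides with the homotopy colimit in pc $A$-modules that computes $d(M)$, and then to evaluate that homotopy colimit term by term with the help of Lemma \ref{lem:thick}.

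In detail, I would begin by writing $M=\colim_\alpha M_\alpha$ as the filtered colimit of its finite-dimensional $C$-subcomodules. Since injections are cofibrations in the model category of $C$-comodules and every object is cofibrant there, this filtered colimit is a homotopy colimit, so $M\simeq\hocolim_\alpha M_\alpha$; equivalently, regarding $M$ as a discrete $A$-module (where $A=C^*$), it is the homotopy colimit of the finite-dimensional $A$-modules $M_\alpha$. As recorded before the proposition, the derived linear dual is then $d(M)\simeq\big((\hocolim_{\text{pc}})_\alpha N_\alpha\big)^{*}$, with $N_\alpha$ the finite-dimensional pc $A$-module attached to $M_\alpha$, the homotopy colimit formed in pc $A$-modules, and the outer $*$ continuous duality. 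Hence it is enough to produce a weak equivalence $(\hocolim_{\text{pc}})_\alpha N_\alpha\simeq\huaL^\vee(M)$ of pc $A$-modules.

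To get it, I would use that $\huaL^\vee$ is the left derived functor of the (left Quillen) profinite completion functor and therefore preserves homotopy colimits; applied to $M\simeq\hocolim_\alpha M_\alpha$ this gives $\huaL^\vee(M)\simeq\hocolim_\alpha\huaL^\vee(M_\alpha)$, the homotopy colimit now taken in pc $A$-modules (so that, for instance, a filtered colimit of finite-dimensional modules is computed as the completed rather than the naive union). By hypothesis each finite-dimensional $M_\alpha$ lies in $\huathick_{D^{\co}(C)}(C)$; the anti-equivalence between $C$-comodules and pc $A$-modules sends $C$ to $A$ and hence $\huathick_{D^{\co}(C)}(C)$ onto $\huathick_{D^{\co}(A)}(A)$, so $N_\alpha$ lies in $\huathick_{D^{\co}(A)}(A)$. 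Lemma \ref{lem:thick} then yields $\huaL^\vee(M_\alpha)\simeq N_\alpha$ for each $\alpha$, and assembling these equivalences gives $\huaL^\vee(M)\simeq(\hocolim_{\text{pc}})_\alpha N_\alpha$; applying continuous duality, $\big(\huaL^\vee(M)\big)^{*}\simeq d(M)$.

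The step I expect to be the main obstacle is the bookkeeping hidden in the second paragraph: one must check that the homotopy colimit appearing in $d(M)\simeq\big((\hocolim_{\text{pc}})_\alpha N_\alpha\big)^{*}$ --- which arises by dualizing the defining homotopy limit of $d$ over the cofiltered diagram of duals of finite-dimensional subcomodules of $M$ --- is literally the homotopy colimit, over the same filtered index and in the same model category of pc $A$-modules, that $\huaL^\vee$ produces from $\{M_\alpha\}$, not merely an abstractly isomorphic one. Together with the transfer of the thick-subcategory hypothesis across the anti-equivalence, this is the only genuinely delicate point; granting it, the proposition follows formally from the commutation of $\huaL^\vee$ with homotopy colimits and Lemma \ref{lem:thick}.
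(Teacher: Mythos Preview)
Your proposal is correct and follows essentially the same route as the paper's own proof: both pass to pc $C^*$-modules, transfer the thick-subcategory hypothesis across the anti-equivalence, invoke Lemma~\ref{lem:thick} to identify $\huaL^\vee(M_\alpha)$ with $M_\alpha$, and then use that $\huaL^\vee$, being left Quillen, commutes with homotopy colimits to conclude. The bookkeeping point you flag in your final paragraph is exactly the one the paper handles (somewhat tersely) by writing out the chain $\hocolim_{\text{pc}} M_\alpha \simeq \huaL^\vee(\hocolim M_\alpha) \simeq \huaL^\vee(\colim M_\alpha) \simeq \huaL^\vee(M)$, distinguishing the pc homotopy colimit from the discrete one.
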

\begin{proof}
	Let $M^*$ be the pc $C^*$-module corresponding to the $C$-comodule $M$; then $M^*\cong \lim M^*_\alpha\simeq \holim M^*_\alpha$.  Since $M_\alpha\in \huathick_{{D}^{co}(C)}(C)$ we have that, dually, $M^*_\alpha\in \huathick_{{D}^{co}(C^*)}(C^*)$ and therefore by Lemma \ref{lem:thick}, ${\huaL^\wedge}(M^*_\alpha)\simeq M^*_\alpha$. Since the derived completion functor commutes with the homotopy colimits (being left Quillen), we obtain:
	\begin{align*}
		{\hocolim}_{\text{pc}} (M^*_\alpha)^*&\cong {\hocolim}_{\text{pc}} (M_\alpha)\\
		&\underset{\text{\em w.e.}}{\simeq} \huaL^\wedge (\hocolim M_\alpha)\\
		&\underset{\text{\em w.e.}}{\simeq} \huaL^\wedge (\colim M_\alpha)\\
		&\underset{\text{\em w.e.}}{\simeq} \huaL^\wedge (M).
	\end{align*}
	Here $\hocolim_{\text {pc}}$ refers to the homolotopy colimit taken in the category of pc $C^*$-modules whereas unmarked $\hocolim$ is the homotopy colimit in the discrete category (and so, it is weakly equivalent to the ordinary colimit since it is taken along a system of inclusions). Thus, one obtain that $d(M)=\big((\hocolim_{\text{pc}})_{\alpha}M_\alpha\big)^{*}\underset{\text{\em w.e.}}{\simeq}  \big(\huaL^\wedge (M)\big)^{*}$ as claimed.
\end{proof}
\begin{example}\label{ex:polynomial}
	Let $C^*=\bk[[x]]$ (so that $C\cong x^{-1}\bk[x^{-1}]$) and $M=C$. We have $C^*\cong \underset{\text{\em n}}{\lim}~\bk[x]/x^n$ and the $\bk[[x]]$-module $\bk[x]/x^n$ admits a length two free resolution as a pc $\bk[[x]]$-module, so the conditions of Proposition \ref{prop:derivedualcriterion} are satisfied. We conclude that $$d(x^{-1}\bk[x^{-1}])\simeq \big(\huaL^\wedge (x^{-1}\bk[x^{-1}])\big)^{*}.$$ This, however, does not quite finish the calculation since the cofibrant replacement of the discrete $\bk[[x]]$-module $x^{-1}\bk[x^{-1}]$ is not very nice. Instead, we will use the fact that $x^{-1}\bk[x^{-1}]$ is quasi-isomorphic to the homotopy cofiber of the inclusion $\bk[[x]]\to \bk[[x]][x^{-1}]$ (in fact, it is isomorphic to the actual cofiber). Let $M$ be a finite-dimensional $\bk[[x]]$-module; then $\RHom_{\bk[[x]]}(x^{-1}\bk[x^{-1}],M)$ is quasi-isomorphic to the homotopy limit of the system $M\to M\to\ldots$, where the maps are given by the multiplication by $x$. Then $\varprojlim \HH^\bullet(M)=0$ and because $\dim <\infty$, it follows that no $\lim^1$-terms are present and $\RHom_{\bk[[x]]}(x^{-1}\bk[x^{-1}],M)\simeq 0$ (we thank an anonymous referee for suggesting this argument).
	
	It follows that ${\huaL^\wedge}(\bk[[x]][x^{-1}])=0$. Now note that ${\huaL^\wedge}(\bk[[x]])\simeq \bk[[x]]$  and, since the derived completion functor (being left Quillen) preserves homotopy cofiber sequences, we conclude that ${\huaL^\wedge}(x^{-1}\bk[x^{-1}])\simeq \bk[[x]][1]$. Finally, $d(C)\simeq (\bk[[x]][1])^{*}\cong x^{-1}\bk[x^{-1}][-1]$.
\end{example}
This (seemingly paradoxical) example, using a more ad hoc method, was computed in \cite{BCL}. Of course, it could also be derived from Theorem \ref{derived-CE-comod} since $C^*\cong\bk[[x]]\cong \CE^\bullet(\g)$ where $\g$ is the one-dimensional Lie algebra concentrated in degree 1.
However, it can be generalized in ways not covered by Theorem \ref{derived-CE-comod} as we will demonstrate.

Let $\g$ be a finite-dimensional $L_\infty$-algebra with representing pc dg algebra $\CE^\bullet(\g)$. We will make the following (admittedly very strong) assumption on $\g$.

\begin{hyp}[H]\hypertarget{hyp:H} The image of all higher $L_\infty$-brackets $\Lambda^k\g\to \g$
	is contained in the even graded component of $\g$.
\end{hyp}

We are interested in computing the derived dual to $\CE_\bullet(\g)$ as a comodule over itself. The following result holds.

\begin{prop}\label{prop:CEdual}
	Let $\g$ be a finite-dimensional $L_\infty$-algebra $(\g,\ell)$ satisfying Hypothesis \hyperlink{hyp:H}{(H)}. Then $d(\CE_\bullet(\g))$, the derived linear dual of $\CE_\bullet(\g)$, is weakly equivalent to $\big(\huaL^\wedge (\CE_\bullet(\g))\big)^{*}$  as a comodule over itself.
\end{prop}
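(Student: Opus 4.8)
The plan is to deduce the proposition from Proposition~\ref{prop:derivedualcriterion}: it suffices to exhibit $\CE_\bullet(\g)$ as an increasing union of finite-dimensional dg subcomodules, each lying in $\huathick_{D^{\co}(\CE_\bullet(\g))}\!\big(\CE_\bullet(\g)\big)$. Passing to continuous duals and writing $A:=\CE^\bullet(\g)=\CE_\bullet(\g)^*$, this is the same as writing $A$ as an inverse limit of finite-dimensional dg quotient $A$-modules lying in $\huathick_{D^{\co}(A)}(A)$; the conclusion then follows because continuous linear duality is an (anti-)equivalence identifying $D^{\co}(A)$ with $D^{\co}(\CE_\bullet(\g))$ and sending $A$ to $\CE_\bullet(\g)$. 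First I would record the structure of $A$ under Hypothesis~\hyperlink{hyp:H}{(H)}. Splitting a homogeneous basis of $\g$ into its even part $\{e_i\}_{i=1}^{n}$ and odd part $\{\varepsilon_j\}_{j=1}^{m}$, the dual generators of $\hat{\huaS}(\g^*[-1])$ split into odd exterior variables $x^i$ (dual to the $e_i[1]$) and even power-series variables $y^j$ (dual to the $\varepsilon_j[1]$), so $A\cong\bk[[\mathbf y]]\otimes\Lambda(\mathbf x)$ as a graded algebra. The derivation $\ell$ is the continuous dual of the codifferential of $\CE_\bullet(\g)$, whose corestrictions $\huaS^k(\g[1])\to\g[1]$ are, up to d\'ecalage, the brackets $\mu_k\colon\Lambda^k\g\to\g$. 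Hypothesis~\hyperlink{hyp:H}{(H)} forces all of these to take values in $\g_{\mathrm{even}}$, i.e. in the odd part $\mathrm{span}\{e_i[1]\}$ of $\g[1]$; transposing, $\ell$ annihilates every $y^j$, hence the whole subalgebra $\bk[[\mathbf y]]\subseteq A$, and is in particular $\bk[[\mathbf y]]$-linear.

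Next I would produce the finite-dimensional quotients. For a multi-index $\mathbf k=(k_1,\dots,k_m)$ let $J_{\mathbf k}$ be the ideal of $A$ generated by $(y^1)^{k_1},\dots,(y^m)^{k_m}$. Because $\ell$ kills the $y^j$ and is $\bk[[\mathbf y]]$-linear, $J_{\mathbf k}$ is a differential ideal, so $A_{\mathbf k}:=A/J_{\mathbf k}\cong\big(\bk[[\mathbf y]]/J_{\mathbf k}\big)\otimes\Lambda(\mathbf x)$ is a finite-dimensional dg quotient algebra of $A$, a fortiori a finite-dimensional pc dg $A$-module; and since the $J_{(N,\dots,N)}$ are cofinal with the $\mathfrak m$-adic filtration of $\bk[[\mathbf y]]$, one has $A=\lim_{\mathbf k}A_{\mathbf k}$ as a pc dg algebra. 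Now $(y^1)^{k_1},\dots,(y^m)^{k_m}$ is a regular sequence on $\bk[[\mathbf y]]$ and hence on the free $\bk[[\mathbf y]]$-module $A$, each term remaining a non-zero-divisor modulo the preceding ones, and multiplication by $(y^j)^{k_j}$ commutes with $\ell$. Hence, for each partial ideal $J$ among the $J_{\mathbf k}$ and each new generator $(y^j)^{k_j}\notin J$, there is a short exact sequence of pc dg $A$-modules
\[
0\longrightarrow A/J \xrightarrow{\ \cdot(y^j)^{k_j}\ } A/J \longrightarrow A/\big(J+((y^j)^{k_j})\big)\longrightarrow 0 ,
\]
which yields a distinguished triangle in $D^{\co}(A)$ (the discrepancy between the cokernel and the mapping cone is a contractible, hence coacyclic, complex). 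Adjoining the relations one at a time, each $A_{\mathbf k}$ is built from $A$ by finitely many cones (up to shift), so $A_{\mathbf k}\in\huathick_{D^{\co}(A)}(A)$.

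To conclude, continuous duality turns $A=\lim_{\mathbf k}A_{\mathbf k}$ into $\CE_\bullet(\g)=\colim_{\mathbf k}A_{\mathbf k}^{*}$, exhibiting $\CE_\bullet(\g)$ as the union of its finite-dimensional dg subcomodules $A_{\mathbf k}^{*}$; moreover $A_{\mathbf k}^{*}\in\huathick_{D^{\co}(\CE_\bullet(\g))}\!\big(\CE_\bullet(\g)\big)$ because linear duality is a triangulated (anti-)equivalence carrying $A$ to $\CE_\bullet(\g)$. Proposition~\ref{prop:derivedualcriterion}, applied with $M=\CE_\bullet(\g)$, then gives $d(\CE_\bullet(\g))\simeq\big(\huaL^\vee(\CE_\bullet(\g))\big)^{*}$ as a comodule over itself.

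The main obstacle is the first step: verifying that Hypothesis~\hyperlink{hyp:H}{(H)} genuinely forces $\ell$ to annihilate the power-series variables $y^j$ — equivalently, that $\bk[[\mathbf y]]$ is a differential subalgebra of $\CE^\bullet(\g)$ and each $J_{\mathbf k}$ a differential ideal. This needs careful bookkeeping of the parity and degree shifts relating $\g$, $\g[1]$ and $\g^*[-1]$, and of the duality between the higher brackets $\mu_k$ and the Taylor components of $\ell$. Everything downstream is routine: once $\ell$ kills $\bk[[\mathbf y]]$, the finite-dimensional quotients are honest dg quotients, the Koszul resolution of $\bk[[\mathbf y]]/J_{\mathbf k}$ by a regular sequence is automatically $\ell$-compatible, and Proposition~\ref{prop:derivedualcriterion} finishes the argument.
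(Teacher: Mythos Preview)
Your proposal is correct and follows essentially the same route as the paper's own proof: both reduce to Proposition~\ref{prop:derivedualcriterion} by observing that Hypothesis~(H) makes every even generator $y^j$ an $\ell$-cocycle, then use the ideals generated by powers of the $y^j$ to exhibit $\CE^\bullet(\g)$ as an inverse limit of finite-dimensional dg quotients, each obtained from $\CE^\bullet(\g)$ by iterated cofibres of multiplication by $(y^j)^{k_j}$ and hence lying in the required thick subcategory. The only cosmetic differences are that the paper uses a single exponent $j$ (ideals $((y^1)^j,\ldots,(y^m)^j)$) rather than a full multi-index, and states the cocycle property of the $y^j$ without spelling out the parity bookkeeping you flag as the ``main obstacle''; your version is in fact slightly more explicit on both points.
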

\begin{proof}
	We simply check the conditions of Proposition \ref{prop:derivedualcriterion}. Let the even degree part of $\g[1]$ have dimension $m$ and its odd part have dimension $n$. Then, disregarding the differential, $\CE^\bullet(\g)$ is isomorphic to $\bk[[y^1,\ldots, y^m]]\otimes \Lambda(x^1,\ldots, x^n)$. Note that the Hypothesis \hyperlink{hyp:H}{(H)} ensures that all even generators $y^i,i=1,\ldots, m$ are cocycles with respect to the derivation $\ell$. Let  $I_j$ be the ideal in $\CE^\bullet(\g)$ generated by $(y^1)^j,\ldots, (y^m)^j$ and note that (since the generators $y^i$ are cocycles) this is a dg ideal. Note that
	\[
\CE^\bullet(\g)\cong\lim_j \CE^\bullet(\g)/I_j
	\]
	and $\CE^\bullet(\g)/I_j$ are finite-dimensional $\CE^\bullet(\g)$-modules. The dg $\CE^\bullet(\g)$-module $$\bk[[y^1,\ldots, y^m]]/(y^1)^j\otimes \Lambda(x^1,\ldots, x^n)$$ is a cofiber of the multiplication by $(y^1)^j$ as a self-map of dg $\CE^\bullet(\g)$ and therefore, it belongs to the thick subcategory of $D^{\co}(\CE^\bullet(\g))$ generated by $\CE^\bullet(\g)$. By an obvious induction, the dg $\CE^\bullet(\g)$-module $\CE^\bullet(\g)/I_j$ likewise belongs to the same thick subcategory of $D^{\co}(\CE^\bullet(\g))$.
	
	So the conditions of Proposition \ref{prop:derivedualcriterion} are fulfilled and the desired statement follows.
\end{proof}

Let $A$ be a pc local commutative dg algebra and $z$ be a cocycle in $A$. Inverting $z$ results in a dg algebra $A[z^{-1}]=A\otimes_{\bk[z]}\bk[z,z^{-1}]$ (not pc). Furthermore, given a pc  dg $A$-module $M$, define its localization at $z$ as $M\otimes_{A}A[z^{-1}]$; this will be a non-pc dg $A$-module. These constructions are all homotopy invariant in a suitable sense: $A$ is dual to a conilpotent dg coalgebra $A^*$  and the formation of $A[z^{-1}]$ only depends, up to quasi-isomorphism, on the quasi-isomorphism type of $A$ and,\emph{ a fortiori}, on the weak equivalence type of the coalgebra $A^*$. Similarly, the pc module $M$ is dual to a comodule $M^*$ over the coalgebra $A^*$. Owing to flatness of $A[z^{-1}]$ over $A$, the $A$-module $M[z^{-1}]$ only depends, up to quasi-isomorphism, on the quasi-isomorphism type of $M$ and thus, \emph{a fortiori}, on the weak equivalence type of the pc $A^*$-comodule $M^*$.

\begin{lem}\label{lem:completelocal1}
Given a pc dg algebra $A$ and a pc dg $A$-module $M$,	the derived profinite completion $\huaL^\wedge (M[z^{-1}])$ of the $A$-module $M[z^{-1}]$ is weakly equivalent to zero.
\end{lem}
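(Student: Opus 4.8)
The plan is to show that $U:=\huaL^\vee(M[z^{-1}])$ admits no nonzero morphism, in the coderived category of pc $A$-modules, into any finite-dimensional dg $A$-module, and then to deduce $U\simeq 0$ from the fact that finite-dimensional (co)modules detect the zero object there. Throughout I use that $z$ lies in the maximal ideal $\mathfrak m$ of the complete local pc algebra $A$ --- if $z$ were invertible there would be nothing to localise --- and hence that multiplication by $z$ is nilpotent on every finite-dimensional dg $A$-module, the action of a complete local algebra on a finite-dimensional module being topologically nilpotent.

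The first step is a computation: for every finite-dimensional dg $A$-module $V$ one has $\RHom_A(M[z^{-1}],V)\simeq 0$. Indeed, the central element $z$ acts invertibly on $M[z^{-1}]$, hence invertibly on $\RHom_A(M[z^{-1}],V)$, while on $V$ it satisfies $z^N=0$, hence acts $N$-step nilpotently on the same complex; an operator that is at once invertible and nilpotent forces acyclicity. (Equivalently, $\RHom_A(A[z^{-1}],V)$ is an $A[z^{-1}]$-module killed by $z^N$, so it vanishes, and one concludes by tensor--hom adjunction.) The second step is formal: the profinite completion $(-)^\vee$ is left Quillen with right adjoint the forgetful functor $F$, and $F$ preserves all weak equivalences, so the derived adjunction supplies a natural isomorphism $\RHom_{D^{\co}(A)}(\huaL^\vee X,W)\simeq\RHom_A(X,FW)$ for $X$ a discrete and $W$ a pc dg $A$-module. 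Taking $X=M[z^{-1}]$ and $W=V$ finite-dimensional, so that $FV=V$, the first step gives $\RHom_{D^{\co}(A)}(U,V)\simeq 0$ for all finite-dimensional dg $A$-modules $V$.

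The last step deduces $U\simeq 0$. Passing to continuous duals, $U$ corresponds to the dg comodule $U^*$ over $C:=A^*$, and the previous vanishing reads $\RHom_{D^{\co}(C)}(W',U^*)\simeq 0$ for every finite-dimensional dg $C$-comodule $W'$. Since every dg $C$-comodule $N$ is the filtered union of its finite-dimensional dg subcomodules $N_\beta$ --- given such a graded subcomodule, adjoining its image under $d$ keeps it finite-dimensional and $d$-stable --- and this is a filtered diagram of monomorphisms, hence a homotopy colimit, one gets $\RHom_{D^{\co}(C)}(N,U^*)\simeq\holim_\beta\RHom_{D^{\co}(C)}(N_\beta,U^*)\simeq 0$. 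Applying this with $N=U^*$ shows $\id_{U^*}=0$, so $U^*\simeq 0$ and therefore $\huaL^\vee(M[z^{-1}])=U\simeq 0$.

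I expect the main obstacle to be the last step --- that finite-dimensional comodules suffice to detect the zero object of the coderived category, together with the claim that the filtered union of finite-dimensional subcomodules is a genuine homotopy colimit there. This is the only point where the coderived (rather than the ordinary derived) formalism really intervenes, and it is essentially the principle the authors have already used in Example \ref{ex:polynomial}, so in practice it amounts to making that argument precise. The one hypothesis to keep in view is that $z$ be topologically nilpotent, i.e.\ $z\in\mathfrak m$; Steps one and two are purely formal.
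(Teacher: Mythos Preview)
Your proof is correct and follows essentially the same three-step strategy as the paper: show there are no nontrivial derived maps from $M[z^{-1}]$ into finite-dimensional dg $A$-modules, transport this via the $(\huaL^\vee,F)$ adjunction to the coderived category, and then invoke that finite-dimensional objects detect triviality there. Your Step~1 (the invertible-versus-nilpotent action of $z$ on $\RHom$) is a cleaner formulation of what the paper phrases as ``any target would contain a copy of $\bk[z,z^{-1}]$'', and you unpack the detection step more explicitly by passing to comodules and writing an arbitrary comodule as a homotopy colimit of its finite-dimensional subcomodules, whereas the paper simply asserts this conclusion; but these are refinements of presentation rather than a different argument.
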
	
\begin{proof}
	Note that the profinite completion of  $M[z^{-1}]$ is formed by taking the inverse limit of all finite-dimensional $A$-module  quotients of $M[z^{-1}]$. But any $A$-module into which $M[z^{-1}]$ maps will have a $\bk$-subspace isomorphic to $\bk[z,z^{-1}]$ and thus, cannot be finite-dimensional unless it is zero. Thus, the profinite completion of $M[z^{-1}]$ is zero. Arguing as in Example \ref{ex:polynomial}, we see that for any finite-dimensional $A$-module $N$, the dg vector space $\RHom_A(M[z^{-1}],N)$ is acyclic, in other words there are no nontrivial morphisms in the derived category of $A$ from $M[z^{-1}]$ into a finite-dimensional $A$-module. By adjunction, there are no nontrivial maps in the coderived category of $A$ (i.e. the homotopy category of pc $A$-modules) from $\huaL^\wedge (M[z^{-1}])$ into any finite-dimensional $A$-module. That means that  $\huaL^\wedge (M[z^{-1}])$ is weakly equivalent to zero as a pc $A$-module.
\end{proof}
The following proposition establishes Conjecture \ref{conjecture-comod}  in a variety of situations.
\begin{prop}\label{prop:CEdual'}
	Let $(\g,\ell)$ be a finite-dimensional $L_\infty$-algebra satisfying Hypothesis \hyperlink{hyp:H}{(H)}. Then, Conjecture \ref{conjecture-comod} holds for $\g$.
\end{prop}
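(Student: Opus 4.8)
The plan is to combine Proposition~\ref{prop:CEdual} with the explicit description of derived profinite completion, generalising the one-variable computation of Example~\ref{ex:polynomial}. By Proposition~\ref{prop:CEdual} (which already verifies the thick-subcategory hypothesis under \hyperlink{hyp:H}{(H)}), we have $d(\CE_\bullet(\g))\simeq\big(\huaL^\vee(\CE_\bullet(\g))\big)^{*}$, where $\CE_\bullet(\g)$ is regarded as a (discrete) module over $\CE^\bullet(\g)=\CE_\bullet(\g)^{*}$. So everything reduces to computing $\huaL^\vee(\CE_\bullet(\g))$ as a dg $\CE^\bullet(\g)$-module and then taking its continuous dual. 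Disregarding the differential $\ell$, write $\CE^\bullet(\g)\cong\bk[[y^1,\ldots,y^m]]\otimes\Lambda(x^1,\ldots,x^n)$ with the $y^i$ even and the $x^j$ odd; by \hyperlink{hyp:H}{(H)} the $y^i$ are $\ell$-cocycles, so for each $j$ the ideal $I_j:=\big((y^1)^j,\ldots,(y^m)^j\big)$ is a dg ideal and every localisation $\CE^\bullet(\g)[(y^S)^{-1}]$ (inverting a product $y^S=\prod_{i\in S}y^i$, itself a cocycle) is a dg $\CE^\bullet(\g)$-module.

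The first step is to identify $\CE_\bullet(\g)$, as a dg $\CE^\bullet(\g)$-module, with an explicit twist. Since $\CE^\bullet(\g)=\lim_j\CE^\bullet(\g)/I_j$, dually $\CE_\bullet(\g)=\colim_j(\CE^\bullet(\g)/I_j)^{*}$, the colimit taken over the injections dual to the surjections $\CE^\bullet(\g)/I_{j+1}\twoheadrightarrow\CE^\bullet(\g)/I_j$. Applying Corollary~\ref{cor:adjoint} with $\xi=\ell$ and $\mathbf k=(j,\ldots,j)$ identifies $(\CE^\bullet(\g)/I_j)^{*}$ with $\CE^\bullet(\g)/I_j$ shifted by $|\mathbf x|-|\mathbf y|$ and equipped with the differential coming from $\ell+\nabla(\ell)\cdot(-)$, i.e. with the right twist of $\CE^\bullet(\g)/I_j$ by the $1$-cocycle $\nabla(\ell)$. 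Under these identifications the transition maps become ``multiplication by $y^1\cdots y^m$'', and their colimit is $\mathbf y^{-1}\bk[\mathbf y^{-1}]\otimes\Lambda(\mathbf x)$, so passing to the colimit yields an isomorphism of dg $\CE^\bullet(\g)$-modules
\[
\CE_\bullet(\g)\;\cong\;\big(\mathbf y^{-1}\bk[\mathbf y^{-1}]\otimes\Lambda(\mathbf x)\big)^{[\nabla(\ell)]}\,[\,|\mathbf x|-|\mathbf y|\,].
\]

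The second step computes $\huaL^\vee$ of the right-hand side via a \v{C}ech-type resolution, exactly as in Example~\ref{ex:polynomial}. The \v{C}ech complex $\check C^\bullet$ on $y^1,\ldots,y^m$, with terms $\bigoplus_{|S|=p}\CE^\bullet(\g)[(y^S)^{-1}]$ in cohomological degree $p$ (all carrying the twisted differential $\ell+\nabla(\ell)\cdot(-)$, which is legitimate because the $y^i$ are cocycles, so the \v{C}ech differentials are $\CE^\bullet(\g)$-linear and commute with both $\ell$ and multiplication by $\nabla(\ell)$), has totalisation quasi-isomorphic to $\big(\mathbf y^{-1}\bk[\mathbf y^{-1}]\otimes\Lambda(\mathbf x)\big)^{[\nabla(\ell)]}[-m]$, since its cohomology is the top local cohomology $H^m_{(\mathbf y)}$, concentrated in degree $m$. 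Hence $\CE_\bullet(\g)\simeq\mathrm{Tot}(\check C^\bullet)\,[\,m+|\mathbf x|-|\mathbf y|\,]$, and a short bookkeeping computation gives $m+|\mathbf x|-|\mathbf y|=|\g|$. Because $\huaL^\vee$ is the left derived functor of a left Quillen functor it is triangulated, and so may be applied termwise to the finite complex $\check C^\bullet$; by Lemma~\ref{lem:completelocal1} (together with the fact that twisting commutes with $\huaL^\vee$, twisting being an autoequivalence compatible with the module structure) each localisation term is sent to $0$, while the degree-$0$ term $\CE^\bullet(\g)^{[\nabla(\ell)]}$ is preserved by Lemma~\ref{lem:thick}. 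Therefore $\huaL^\vee(\CE_\bullet(\g))\simeq\CE^\bullet(\g)^{[\nabla(\ell)]}[\,|\g|\,]$.

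Finally, taking continuous duals and using that the dual of a twist is the twist of the dual (Proposition~\ref{Twisted-theory}), together with $\big(\CE^\bullet(\g)\big)^{*}=\CE_\bullet(\g)$ and cocommutativity of $\CE_\bullet(\g)$, gives
\[
d(\CE_\bullet(\g))\;\simeq\;\big(\CE^\bullet(\g)^{[\nabla(\ell)]}[\,|\g|\,]\big)^{*}\;\cong\;\CE_\bullet(\g)^{[\nabla(\ell)]}[-|\g|],
\]
which is the assertion of Conjecture~\ref{conjecture-comod}. The step I expect to require the most care — and the genuine content of the argument — is the first one: extracting the twist by \emph{exactly} $\nabla(\ell)$, with the correct sign rather than $-\nabla(\ell)$, from the transpose of $\ell$. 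This rests on Proposition~\ref{prop:adjoint} and on a careful audit of the sign conventions in \eqref{CE-Differential}, in the definitions of the left and right twists, and in \eqref{twisted-comod}; relatedly, one must check that the \v{C}ech quasi-isomorphism of the second step is genuinely one of dg $\CE^\bullet(\g)$-modules, which is precisely where Hypothesis \hyperlink{hyp:H}{(H)} enters (it makes the $y^i$ cocycles, so that the $I_j$ and the localisations are dg). Everything else is formal manipulation with triangulated functors.
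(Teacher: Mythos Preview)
Your argument is correct and closely related to the paper's, but packaged differently. The paper does not route through Proposition~\ref{prop:CEdual} and $\huaL^\vee$ explicitly; instead it computes the homotopy colimit $\hocolim_{\mathbf k}\big(\CE^\bullet(\g)/(\mathbf y^{\mathbf k})\big)^*$ directly in the pc module category, one variable at a time. For each $y^i$ it writes down a ladder of cofibre sequences $\CE^\bullet(\g)^{[\nabla(\ell)]}\xrightarrow{(y^i)^k}\CE^\bullet(\g)^{[\nabla(\ell)]}\to\CE^\bullet(\g)^{[\nabla(\ell)]}/(y^i)^k$, shows the hocolim of the middle row vanishes (by testing against the cogenerator $\bk$), and concludes $\hocolim_k\CE^\bullet(\g)^{[\nabla(\ell)]}/(y^i)^k\simeq\CE^\bullet(\g)^{[\nabla(\ell)]}[1]$; induction over the $m$ even generators gives the shift by $m$, and Corollary~\ref{cor:adjoint} supplies the twist and the remaining shift exactly as in your first step. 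Your \v{C}ech/local-cohomology approach does the same inductive cofibre computation all at once: the paper's ladder argument is precisely the one-variable stable Koszul computation, and your use of Lemma~\ref{lem:completelocal1} to kill localisation terms plays the role of the paper's vanishing-of-the-middle-row step. Your route is perhaps more conceptual and makes the connection to local cohomology explicit; the paper's is more elementary and stays entirely within the pc module category without invoking $\huaL^\vee$ or \v{C}ech machinery. Both rest on Corollary~\ref{cor:adjoint} for the identification of the twist by $\nabla(\ell)$, which, as you correctly flag, is where the real content lies.
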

\begin{proof}
	Consider the graded commutative pc algebra $\CE^\bullet(\g)$, linear dual to the coalgebra $\CE_\bullet(\g)$ supplied.

The given assumption on the $L_\infty$-structure of $\g$ implies that the even elements of $(\g[1])^*\in\CE^\bullet(\g)$ are cocycles under the differential $\ell$.
Choose a basis ${\bf x,y}$ of $(\g[1])^*$ where, as before, ${\bf y}=\{y^1,\ldots,y^m\}$ are even basis elements of $(\g[1])^*$, and ${\bf x}=\{x^1,\ldots,x^n\}$ are the odd ones; in other words, we have $\CE^\bullet(\g)\cong \bk[[{\bf y}]]\otimes\Lambda(\bf x)$. The vector field $\ell$ has the property that $\ell({\bf y})=0$.
We have the following commutative diagram of pc $\CE^\bullet(\g)$-modules:
		\begin{equation}\label{eq:diagram}	\xymatrix{
		{\CE^\bullet}(\g)^{[\nabla(\ell)]}\ar^{\cong}[r]\ar^{y^1}[d]&{\CE^\bullet}(\g)^{[\nabla(\ell)]}\ar^{\cong}[r]\ar^{(y^1)^2}[d]&\ldots&{\CE^\bullet}(\g)^{[\nabla(\ell)]}\ar^{\cong}[r]\ar^{(y^1)^k}[d]&\ldots \\
		{\CE^\bullet}(\g)^{[\nabla(\ell)]}\ar^{y^1}[r]\ar[d]&{\CE^\bullet}(\g)^{[\nabla(\ell)]}\ar[d]\ar^{y^1}[r]&\ldots &{\CE^\bullet}(\g)^{^{[\nabla(\ell)]}}\ar^{y^1}[r]\ar[d]&\ldots\\
		{\CE^\bullet}(\g)^{[\nabla(\ell)]}/y^1\ar^{y^1}[r]&{\CE^\bullet}(\g)^{[\nabla(\ell)]}/(y^1)^2\ar^-{y^1}[r]&\ldots&{\CE^\bullet}(\g)^{[\nabla(\ell)]}/(y^1)^k\ar^-{y^1}[r]&\ldots
	}
	\end{equation}
	Note that by Hypothesis \hyperlink{hyp:H}{(H)}, $y^1$ is a $\ell$-cocycle in $\CE^\bullet(\g)$ and so, the action by $y^1$ in any dg $\CE^\bullet(\g)$-module is a dg $\CE^\bullet(\g)$-module map.

The columns of diagram \ref{eq:diagram} are homotopy cofibers of pc $\CE^\bullet(\g)$-modules and, therefore, so are their homotopy colimits (taken in the model category of pc dg $\CE^\bullet(\g)$-modules). Denote the homotopy colimit of the middle row by $X$; we claim that $X$ is weakly equivalent to zero as a pc $\CE^{\bullet}(\g)$-module. Indeed, applying to it the functor $\RHom_{\text{pc}\CE^\bullet(\g)}(-,\bk)$, we obtain the sequence $\bk\to\bk\to\ldots\to\bk\ldots\to$ with zero connecting maps. That implies that $\RHom_{\text{pc}\CE^\bullet(\g)}(X,\bk)\simeq 0$ and since $\bk$ is a cogenerator of $D^{\co}\big(\CE^\bullet(\g)\big)$ that implies that $X\simeq 0$.

It follows that there is a weak equivalence of pc dg $\CE^\bullet(\g)$-modules
\[
{\hocolim}_{k}{\CE^\bullet}(\g)^{[\nabla(\ell)]}/(y^1)^k\simeq {\CE^\bullet}(\g)^{[\nabla(\ell)]}[1].
\]
Arguing similarly and applying induction, we conclude that 	
\[
{\hocolim}_{k_1,\ldots k_m}{\CE^\bullet}(\g)^{[\nabla(\ell)]}/[(y^1)^{k_1},\ldots, (y^m)^{k_m}]\simeq {\CE^\bullet}(\g)^{[\nabla(\ell)]}[m].
\]
We will rewrite this as
\[
{\hocolim}_{\bf k}{\CE^\bullet}(\g)^{[\nabla(\ell)]}/[({\bf y})^{\bf k}]\simeq {\CE^\bullet}(\g)^{[\nabla(\ell)]}[m].
\]
where ${\bf k}:=(k_1,\ldots, k_m)$.
As in the proof of Propostion \ref{prop:CEdual}, we see that
\[
\CE^\bullet(\g)\cong \lim _{\bf k}{\CE^\bullet}(\g)/[{(\bf y)}^{\bf k}]
\]
and by Corollary \ref{cor:adjoint} we have that
\[
{(\CE^\bullet}(\g)/[({\bf y})^{\bf k}])^*\cong {\CE^\bullet}(\g)^{[\nabla(\ell)]}/[({\bf y})^{\bf k}][\bf{|x|-|y|}]
\]
and so
\begin{align*}
	d(\CE^\bullet(\g))&\simeq \hocolim_{\bf k}({\CE^\bullet}(\g)/[{\bf y}^{\bf k}])^*\\
	&\simeq  {\CE^\bullet}(\g)^{[\nabla(\ell)]}[{m+\bf{|x|-|y|}}]\\
	&\simeq  {\CE^\bullet}(\g)^{[\nabla(\ell)]}[|\g|].
\end{align*}
Since the image of all higher $L_\infty$-brackets $\Lambda^k\g\to \g$
	is contained in the even graded component of $\g$, we deduce that $|\g|=|\HH^\bullet(\g)|$. Thus we obtain that 
\begin{eqnarray*}
d(\CE_\bullet(\g))\simeq d(\CE^\bullet(\g))^*\simeq \big({\CE^\bullet}(\g)^{[\nabla(\ell)]}[|\HH^\bullet(\g)|]\big)^*\simeq {\CE_\bullet}(\g)^{[\nabla(\ell)]}[-|\HH^\bullet(\g)|].
\end{eqnarray*}
This finishes the proof.
\end{proof}

\section{Examples}
\begin{example}
Consider the two dimensional $L_\infty$-algebra $(\g,\ell)$ whose representative pc dg algebra is of the form $\CE^\bullet(\g)=\bk[[y]]\otimes\Lambda(x)$ with $|y|=2$ and $|x|=2n+1$. The CE differential has the form $\ell=y^{n+1}\frac{\partial}{\partial x}$, i.e. $\ell(y)=0$ and $\ell(x)=y^{n+1}$. The conditions of Proposition \ref{prop:CEdual'} are satisfied; $\nabla(\ell)=0$ and $|\g|=1+|x|-|y|=2n$. The dg coalgebra $\CE_\bullet(\g)$ is $(2n)$-Frobenius and $d(\CE_\bullet(\g))\simeq\CE_\bullet(\g)[-2n]$.

In fact, this can be seen from more elementary considerations. It is clear that the map $\CE^\bullet(\g)\to \bk[y]/y^{n+1}, x\mapsto 0, y\mapsto y$ is a filtered quasi-isomorphism and therefore, a weak equivalence of pc dg algebras. In other words, $\CE^\bullet(\g)$ has a finite-dimensional model and for the latter the derived dual is the same as the ordinary $\bk$-linear dual. Furthermore, for $\bk=\mathbb{Q}$, it is clear that  $\CE^\bullet(\g)$ is the Sullivan-de Rham minimal model of the space ${\mathbb C}P^n$. The coderived category of $\CE_\bullet(\g)$-comodules is equivalent to the derived category of $\infty$-local systems on ${\mathbb C}P^n$. It is CY, reflecting Poincar\'e duality on ${\mathbb C}P^n$.
\end{example}

\begin{example}
	Let $\g$ be a two-dimensional  $L_\infty$-algebra for which $\CE^\bullet(\g)=\bk[[x]]\otimes\Lambda(y)$. Here the degree of $x$ is zero and the degree of $y$ is $1$. The CE differential determining the $L_\infty$-structure on $\g$ is defined by
	\begin{eqnarray*}
		\ell=x^ny\frac{\partial}{\partial x},\,\,n\in \mathbb Z_{\ge 1}.
	\end{eqnarray*}
	One has $\nabla(\ell)=nx^{n-1}y$ and $|\g|=1-|x|+|y|=2$. The cochain complex of the $\CE^\bullet(\g)$ is given by
	\begin{eqnarray*}
		0\longrightarrow \bk[[x]]1 \stackrel{\ell}{\longrightarrow}\bk[[x]]y\longrightarrow 0.
	\end{eqnarray*}
	So the cohomology algebra of $\CE^\bullet(\g)$ is the graded algebra
	$$
	\ker(\ell)\oplus
	\frac{\bk[[x]]y}{\IM(\ell)}
	$$
	which is concentrated in degree $0$ and $1$. For the element $f=\sum_{i=0}^{+\infty}a_ix^i\in \bk[[x]]$, we have
	\begin{eqnarray}\label{eq:sqzero}
		\ell(f)=\sum_{i=0}^{+\infty}ia_ix^{n+i-1}y.
	\end{eqnarray}
	One deduces that $\ker(\ell)=\bk$ and $\IM(\ell)=x^n\bk[[x]]y$. Thus, is
\begin{eqnarray}\label{eq:sqzero-1}
\HH^\bullet (\CE^\bullet(\g))\cong \bk\oplus (\bk[x]/x^n)y.	
\end{eqnarray}
	More precisely, the cohomology algebra $\HH^\bullet (\CE^\bullet(\g))$ is the extension of $\bk$ by an $n$-dimensional square-zero ideal $(\bk[x]/x^n)y$ concentrated in degree 1.
	
	
	
	The cochain complex of the twisted module  $\CE^\bullet(\g)^{[\nabla(\ell)]}$ is given by
	\begin{eqnarray*}
		0\longrightarrow \bk[[x]] \stackrel{\ell^{[\nabla(\ell)]}}{\longrightarrow}\bk[[x]]y\longrightarrow 0.
	\end{eqnarray*}
	So the cohomology of $\CE^\bullet(\g)^{[\nabla(\ell)]}$ is the graded module
	$$
	\ker(\ell^{[\nabla(\ell)]})\oplus
	\frac{\bk[[x]]y}{\IM(\ell^{[\nabla(\ell)]})}.
	$$
	For element $f=\sum_{i=0}^{+\infty}a_ix^i\in \bk[[x]]$, we have
	\begin{eqnarray*}
		\ell^{[\nabla(\ell)]}(f)=\sum_{i=0}^{+\infty}(i+n)a_ix^{n+i-1}y.
	\end{eqnarray*}
	One deduces that $\ker(\ell^{[\nabla(\ell)]})=\{0\}$ and $\IM(\ell^{[\nabla(\ell)]})=x^{n-1}\bk[[x]]y$. Thus,  is
		\begin{equation}\label{eq:sqzero1}
\HH^\bullet(\CE^\bullet(\g)^{[\nabla(\ell)]})\cong (\bk[x]/{x^{n-1}})y,
\end{equation}
the $n$-dimensional vector space sitting in degree 1. Moreover $\HH^\bullet(\CE^\bullet(\g)^{[\nabla(\ell)]})$ is a graded $\HH^\bullet (\CE^\bullet(\g))$-module where the action of $\HH^\bullet (\CE^\bullet(\g))$, cf (\ref{eq:sqzero-1}) is through the augmentation into $\bk$.
	
	Note that Proposition \ref{prop:CEdual'} is not directly applicable to $\g$ since the CE differential $\ell$ is not zero when applied to
	the even generator $x$ (so Hypothesis \hyperlink{hyp:H}{(H)} is not satisfied). However, it turns out that the method applied in the proof of Proposition \ref{prop:CEdual'} still works  (because $\ell(x)$ is divisible by $x$ even though it is not zero).
	
		So consider the following diagram of $\CE^\bullet(\g)$-modules:
		\[	\xymatrix{
			{\CE^\bullet}(\g)^{[x^{n-1}y]}\ar^{\cong}[r]\ar^x[d]&{\CE^\bullet}(\g)^{[x^{n-1}y]}\ar^{\cong}[r]\ar^{x^2}[d]&\ldots&{\CE^\bullet}(\g)^{[x^{n-1}y]}\ar^{\cong}[r]\ar^{x^k}[d]&\ldots \\
			{\CE^\bullet}(\g)\ar^x[r]\ar[d]&{\CE^\bullet}(\g)^{[-x^{n-1}y]}\ar[d]\ar^-x[r]&\ldots &{\CE^\bullet}(\g)^{[-(k-1)x^{n-1}y]}\ar^-{x}[r]\ar[d]&\ldots\\
			{\CE^\bullet}(\g)/x\ar^x[r]&{\CE^\bullet}(\g)^{[-x^{n-1}y]}/x^2\ar^-x[r]&\ldots&{\CE^\bullet}(\g)^{[-(k-1)x^{n-1}y]}/x^k\ar^-{x}[r]&\ldots
		}
		\]
		The columns of the above diagram are cofibre sequences and therefore, so are their hocolims. It follows that $d(\CE^\bullet(\g))$ is weakly equivalent to the cofibre of the map $${\CE^\bullet}(\g)^{[x^{n-1}y]}[1]\to \hocolim_k {\CE^\bullet}(\g)^{[-k x^{n-1}y]}[1].$$
		
		We claim that $\hocolim_k {\CE^\bullet}(\g)^{[-k x^{n-1}y]}=0$. Note applying the functor $\Hom_{\CE^\bullet(\g)}(-,\bk)$ to the inverse system ${\CE^\bullet}(\g)^{[-k x^{n-1}y]}$ we obtain the direct system $\bk\to \bk\to\ldots$ and all the maps in it are zero. It follows that
		$\RHom_{\CE^\bullet(\g)}(\hocolim_k {\CE^\bullet}(\g)^{[-k x^{n-1}y]},\bk)=0$ and since $\bk$ is the cogenerator of the coderived category of pc $\CE^\bullet(\g)$-modules, the conclusion follows.
		
		We conclude that $d(\CE^\bullet(\g))\simeq \CE^\bullet(\g)^{[x^{n-1}y]}[2]$
		 and so $d(\CE_\bullet(\g))\simeq {\CE}_\bullet(\g)^{[x^{n-1}y]}[-2]$ confirming Conjecture \ref{conjecture-comod} in this case.
	\end{example}

	\begin{example}
		Let $\g$ be a 3-dimensional $L_\infty$-algebra whose representing pc dg algebra is $\bk[[x,y]]\otimes\Lambda(z)$. Here the degrees  of $x$ and $y$ are zero and the degree of $z$ is $1$. The CE differential $\ell$ is defined by
		\begin{eqnarray*}
			\ell=y^nz\frac{\partial}{\partial x},\,\,n\in \mathbb Z_{\ge 1}.
		\end{eqnarray*}
		One has $\nabla(\ell)=0$ and $|\g|=2-|x|-|y|+|z|=3$. The cochain complex of the $\CE^\bullet(\g)$ is given by
		\begin{eqnarray*}
			0\longrightarrow \bk[[x,y]] \stackrel{\ell}{\longrightarrow}\bk[[x,y]]z\longrightarrow 0.
		\end{eqnarray*}
		So the cohomology algebra of $\CE^\bullet(\g)$ is the graded algebra
		$$
		\ker(\ell)\oplus
		\frac{\bk[[x,y]]z}{\IM(\ell)}
		$$
		which is concentrated in degree $0$ and $1$. For element $f=\sum_{i,j=0}^{+\infty}a_{ij}x^iy^j\in \bk[[x,y]]$, we have
		\begin{eqnarray*}
			\ell(f)=\sum_{i,j=0}^{+\infty}ia_{ij}x^{i-1}y^{n+j}z.
		\end{eqnarray*}
		Moreover, one can deduce that $\ker(\ell)=\bk[[y]]$ and $\IM(\ell)=y^n\bk[[x,y]]z$. Thus the cohomology algebra of $\CE^\bullet(\g)$ is
		$$
		\bk[[y]]\oplus \frac{\bk[[x,y]]}{y^n\bk[[x,y]]}z,
		$$
and the multiplication is given by
		\begin{eqnarray*}
			y^i\cdot y^j=y^{i+j},i,j=0,\ldots,~y^i\cdot x^j[y^k]z=x^j[y^{i+k}]z,i,j,k=0,\ldots,~x^j[y^k]z\cdot x^{j'}[y^{k'}]z=0.
		\end{eqnarray*}
What about $d(\CE^\bullet(\g))$? For $n=1$, $\g$ is a graded Lie algebra (higher $L_\infty$-structure maps vanish) and Theorem \ref{thm:super-g-trace} applies giving $d(\CE_\bullet(\g))\simeq\CE_\bullet(\g)[-3]$. For a general $n$, Conjecture \ref{conjecture-comod}
		 would give the same result but since Hypothesis \hyperlink{hyp:H}{(H)} is not satisfied, the question is open.
	\end{example}

	\appendix\section{CY structures on graded categories}\label{A}

The following is a definition of $n$-CY structure on a $\mathbb Z$-graded $\bk$-linear category $\huaC$; it reduces to Definition \ref{defi:nCY} in the case when $\huaC$ is triangulated and $\huaC=\mathcal{L}=\mathcal{M}$.

\begin{defi}\label{CY-graded-cat}
Let $\huaC$ be a $\mathbb Z$-graded $\bk$-linear category and for all $x,y\in \huaC$, the graded vector space $\Hom^\bullet_{\huaC}(x,y)$ is finite-dimensional.  We call $\huaC$  is an $n$-CY category, if there exists degree $0$  bifunctorial isomorphisms of the $\mathbb Z$-graded $\bk$-linear vector spaces
\begin{eqnarray}\label{functorial iso}
\Phi_{x,y}:\Hom^\bullet_\huaC(x,y)\to \Hom^\bullet_\huaC(y,x)^*[-n],~\forall x,y\in\huaC.
\end{eqnarray}
\end{defi}

\begin{prop}
Suppose that the $\mathbb Z$-graded $\bk$-linear category $\huaC$ has an $n$-CY structure. Define degree $0$ pairings $\langle-,-\rangle:\Hom_\huaC^\bullet(x,y)\times \Hom^\bullet_\huaC(y,x)\to \bk[-n]$ as follows:
\begin{eqnarray}\label{graded-pairing-CY}
\langle f,g\rangle:=(-1)^{n|f|}\big(\Phi_{x,y}(f)[n]\big)(g),~~\forall f\in\Hom^\bullet_\huaC(x,y),g\in\Hom^\bullet_\huaC(y,x).
\end{eqnarray}
Then these pairings are non-degenerate and satisfy the following  condition of cyclic invariance:
\begin{eqnarray}\label{invariant condition}
\langle g\circ f,h\rangle=(-1)^{|g|(|f|+|h|)}\langle f\circ h,g\rangle.
\end{eqnarray}

Conversely, suppose that the $\mathbb Z$-graded $\bk$-linear category $\huaC$ has a collection of  degree $0$ non-degenerate pairings
$$\langle-,-\rangle:\Hom^\bullet_\huaC(x,y)\times \Hom^\bullet_\huaC(y,x)\to \bk[-n],
$$
which satisfy  \eqref{invariant condition}. Then there  exists a collection of degree $0$ bifunctorial isomorphisms
 $$
 \Phi_{x,y}:\Hom^\bullet_\huaC(x,y)\to \Hom^\bullet_\huaC(y,x)^*[-n],
 $$
 of the $\mathbb Z$-graded $\bk$-linear vector spaces, making $\huaC$ an $n$-CY category.

\end{prop}
\begin{proof}

 Since $\Phi_{x,y}$ is an isomorphism of $\bk$-graded vector spaces, it follows that the pairing $\langle-,-\rangle$ is non-degenerate. By \eqref{functorial iso}, for $f\in \Hom^\bullet_\huaC(x,y)$ and $g\in \Hom^\bullet_\huaC(y,z)$, we have  the following commutative diagram:
 \begin{equation}\label{functorial-1}
			\begin{split}
 \xymatrix{\Hom^\bullet_\huaC(x,y)  \ar[rr]^{\Phi_{x,y}}\ar[d]_{\Hom^\bullet_\huaC(x,g)} &  & \Hom^\bullet_\huaC(y,x)^*[-n] \ar[d]^{\Hom^\bullet_\huaC(g,x)^*[-n]}  \\
 \Hom^\bullet_\huaC(x,z)\ar[rr]^{\Phi_{x,z}} & &\Hom^\bullet_\huaC(z,x)^*[-n].}
\end{split}
		\end{equation}

 It follows that $\Phi_{x,z}\big(\Hom^\bullet_\huaC(x,g)(f)\big)=\big(\Hom^\bullet_\huaC(g,x)^*[-n]\big)\Phi_{x,y}(f)$. By \eqref{graded-pairing-CY}, for any $h\in \Hom_\huaC(z,x)$, we have
 \begin{eqnarray*}
\Big(\Phi_{x,z}\big(\Hom^\bullet_\huaC(x,g)(f)\big)[n]\Big)(h)=(-1)^{n(|f|+|g|)}\langle \Hom^\bullet_\huaC(x,g)(f),h\rangle=(-1)^{n(|f|+|g|)}\langle g\circ f,h\rangle.
 \end{eqnarray*}
 On the other hand, we deduce that
  \begin{eqnarray*}
\Big\{\Big(\big(\Hom^\bullet_\huaC(g,x)^*[-n]\big)\Phi_{x,y}(f)\Big)[n]\Big\}(h)&=&\Big(\Hom^\bullet_\huaC(g,x)^*\big(\Phi_{x,y}(f)[n]\big)\Big)(h)\\
                                                      &=&(-1)^{|g|(|f|-n)}\big(\Phi_{x,y}(f)[n]\big)(\Hom^\bullet_\huaC(g,x)(h))\\
                                                      &=&(-1)^{|g|(|f|-n)}(-1)^{|g||h|}\big(\Phi_{x,y}(f)[n]\big)(h\circ g)\\
                                                      &=&(-1)^{|g|(|f|+|h|-n)}(-1)^{n|f|}\langle f, h\circ g\rangle.
 \end{eqnarray*}
 Thus, we obtain
 \begin{eqnarray}\label{invariant condition-1}
 \langle g\circ f,h\rangle=(-1)^{|g|(|f|+|h|)}\langle f, h\circ g\rangle.
 \end{eqnarray}
 Similarly, by \eqref{functorial iso}, for $f\in \Hom^\bullet_\huaC(x,y)$ and $g\in \Hom^\bullet_\huaC(y,z)$, we have  the following commutative diagram:
 \begin{equation}\label{functorial-2}
			\begin{split}
 \xymatrix{\Hom^\bullet_\huaC(y,z)  \ar[rr]^{\Phi_{y,z}}\ar[d]_{\Hom^\bullet_\huaC(f,z)} &  & \Hom^\bullet_\huaC(z,y)^*[-n] \ar[d]^{\Hom^\bullet_\huaC(z,f)^*[-n]}  \\
 \Hom^\bullet_\huaC(x,z)\ar[rr]^{\Phi_{x,z}} & &\Hom^\bullet_\huaC(z,x)^*[-n].}
\end{split}
 \end{equation}

 Therefore, we have $\Phi_{x,z}\big(\Hom^\bullet_\huaC(f,z)(g)\big)=\big(\Hom^\bullet_\huaC(z,f)^*[-n]\big)\Phi_{y,z}(g)$. For any $h\in \Hom_\huaC(z,x)$,  we obtain
 \begin{eqnarray*}
\Big(\Phi_{x,z}\big(\Hom^\bullet_\huaC(f,z)(g)\big)[n]\Big)(h)=(-1)^{n(|f|+|g|)}\langle \Hom^\bullet_\huaC(f,z)(g),h\rangle=(-1)^{n(|f|+|g|)}(-1)^{|f||g|} \langle g\circ f,h\rangle.
 \end{eqnarray*}
 On the other hand, we deduce that
  \begin{eqnarray*}
\Big\{\Big(\big(\Hom^\bullet_\huaC(z,f)^*[-n]\big)\Phi_{y,z}(g)\Big)[n]\Big\}(h)&=&\Big(\Hom^\bullet_\huaC(z,f)^*\big(\Phi_{y,z}(g)[n]\big)\Big)(h)\\
                                                      &=&(-1)^{|f|(|g|-n)}\big(\Phi_{y,z}(g)[n]\big)(\Hom^\bullet_\huaC(z,f)(h))\\
                                                      &=&(-1)^{|f|(|g|-n)}\big(\Phi_{y,z}(g)[n]\big)(f\circ h)\\
                                                      &=&(-1)^{|f|(|g|-n)}(-1)^{n|g|}\langle g, f\circ h\rangle.
 \end{eqnarray*}
 Thus, the following equality holds:
  \begin{eqnarray}\label{invariant condition-2}
  \langle g\circ f,h\rangle=\langle g, f\circ h\rangle.
   \end{eqnarray}

  We set $z=x$ and $h=\id_x$, it follows that
 $$
 \langle g\circ f,\id_x\rangle\stackrel{\eqref{invariant condition-1}}{=}(-1)^{|g||f|}\langle f, g\rangle\stackrel{\eqref{invariant condition-2}}{=}\langle g, f\rangle.
 $$
 Therefore, the pairing is graded symmetric.
Moreover, we deduce that Eq.~\eqref{invariant condition-1} and Eq.~\eqref{invariant condition-2} are equivalent to the cyclic invariance condition
$$
\langle g\circ f,h\rangle=(-1)^{|g|(|f|+|h|)}\langle f\circ h,g\rangle.
$$

Conversely, suppose that the $\mathbb Z$-graded $\bk$-linear category $\huaC$ has a collection of degree $0$ non-degenerate pairings
$$\langle-,-\rangle:\Hom^\bullet_\huaC(x,y)\times \Hom^\bullet_\huaC(y,x)\to \bk[-n],
$$
which satisfy  condition \eqref{invariant condition}. One can define the degree $0$ morphisms of the $\mathbb Z$-graded $\bk$-linear vector spaces
\begin{eqnarray*}
\Phi_{x,y}:\Hom^\bullet_\huaC(x,y)\to \Hom^\bullet_\huaC(y,x)^*[-n],~\forall x,y\in\huaC,
\end{eqnarray*}
as follows:
\begin{eqnarray}\label{pairing-to-Iso}
\Phi_{x,y}(f):=(-1)^{n|f|}\langle f,-\rangle[-n],~~\forall f\in\Hom^\bullet_\huaC(x,y).
\end{eqnarray}
Since the pairings $\langle -,-\rangle$ are non-degenerate, we deduce that the morphisms $\Phi_{x,y},~\forall x,y\in\huaC$ are isomorphisms. Now we need to prove that the  morphisms $\Phi_{x,y}$ are bifunctorial, in other words, that  the diagrams \eqref{functorial-1} and \eqref{functorial-2} are commutative. By the definition of $\Phi_{x,y}$ \eqref{pairing-to-Iso}, the commutative diagram \eqref{functorial-1} is equivalent to the equality $\langle g\circ f,h\rangle=(-1)^{|g|(|f|+|h|)}\langle f, h\circ g\rangle$ and the commutative diagram \eqref{functorial-2} is equivalent to the equality $\langle g\circ f,h\rangle=\langle g, f\circ h\rangle$. Since the cyclic invariance condition \eqref{invariant condition} is equivalent to the equalities
\begin{eqnarray*}
\langle g\circ f,h\rangle&=&(-1)^{|g|(|f|+|h|)}\langle f, h\circ g\rangle,\\
\langle g\circ f,h\rangle&=&\langle g, f\circ h\rangle,
\end{eqnarray*}
this proves that the morphisms $\Phi_{x,y},~\forall x,y\in\huaC$ are bifunctorial isomorphisms. Therefore, $\huaC$ is an $n$-CY-category as claimed.
\end{proof}

\begin{rem}
We have proved that a collection of pairings $\langle-,-\rangle:\Hom^\bullet_\huaC(x,y)\times \Hom^\bullet_\huaC(y,x)\to \bk[-n]$ satisfying \eqref{invariant condition} gives an equivalent definition of an $n$-CY structure. This is, in turn, equivalent to the  condition $\langle g\circ f,h\rangle=\langle g,f\circ h\rangle$ plus the graded symmetry of the pairing $\langle,\rangle$, which in the triangulated case specializes to Keller's definition, \cite[Section 1.1]{Kel}. Definition \ref{CY-graded-cat} has an obvious analogue for dg categories; however in the latter case it is customary to impose a further condition involving cyclic homology of $\mathcal C$, see \cite[Definition 3.2]{DB}.
\end{rem}

\begin{rem}
	Let $\mathcal C$ have a single object, i.e. $\C$ is simply a graded $k$-algebra. Then Proposition \eqref{graded-pairing-CY} reduces to the well-known
	statement that a graded algebra $\mathcal C$ possesses an invariant graded symmetric inner product if and only if there exists an isomorphism $\mathcal C\to \mathcal C^*$ of $\mathcal C$-bimodules.
	\end {rem}

\noindent
{\bf Acknowledgement.} We would like to express our thanks to anonymous referees for a host of useful suggestions and corrections, especially pointing out to us issues stemming from the quasi-isomorphism non-invariance of the function $V\mapsto|V|$ for a dg vector space $V$. The first author (A. Lazarev) is grateful to M. Booth, J. Chuang  and T. Voronov for useful discussions on Calabi-Yau categories and duality. The second author (R. Tang) was partially supported by NSFC (W2412041, 12371029).

\noindent
{\bf Data Availability.} No datasets were generated or analyzed
	as part of the present research.

\noindent
{\bf Declarations.}
The authors have no conflict of interest to declare that is relevant to the content of this
	paper.


\begin{thebibliography}{99}
		\bibitem{Bar} V. Baranovsky, A universal enveloping for $L_\infty$-algebras, {\em Math. Res. Lett.} 15 (2008), 1073--1089.

		\bibitem{BCL1} C. Braun,  J. Chuang and A. Lazarev,  Derived localisation of algebras and modules, {\em Adv. Math.} 328 (2018), 555--622.
		
		
		\bibitem{BCL}	M. Booth, J. Chuang and  A.  Lazarev, Nonsmooth Calabi-Yau structures for algebras and coalgebras,
		\texttt{arXiv:2502.12162}.

\bibitem{BL-0}		
M. Booth and  A.  Lazarev, Global Koszul duality, \texttt{arXiv:2304.08409v3}.

		\bibitem{BL} C. Braun and  A. Lazarev, Unimodular homotopy algebras and Chern-Simons theory,
		\emph{J. Pure Appl. Algebra} 219 (2015),  5158--5194.
		
		\bibitem{BZ} K.  Brown and J.  Zhang, {
		Dualising complexes and twisted Hochschild (co)homology for Noetherian Hopf algebras},
		\emph{J. Algebra} 320 (2008), 1814--1850.


\bibitem{DB}C. Brav and T. Dyckerhoff.
Relative Calabi-Yau structures,
{\em Compos.
Math.}, (2019) 155(2):372--412.

		\bibitem{HCL} J. Chuang, J. Holstein and  A. Lazarev, Homotopy theory of monoids and derived localization,
		{\em J. Homotopy Relat. Struct.} 16 (2021), 175--189.
		
\bibitem{CL}
J. Chuang and A. Lazarev, {On the perturbation algebra}, \emph{J. Algebra} 519 (2019), 130--148.

	\bibitem{Covolo}
T. Covolo,  Cohomological approach to the graded Berezinian, \emph{J. Noncommut. Geom.} 9 (2015),  543--565.

\bibitem{delmorgan} P. Deligne and J. W. Morgan,
Notes on supersymmetry (following Joseph Bernstein),  Quantum fields and strings: a course for mathematicians, Vol. 1, 2 (Princeton, NJ, 1996/1997), 41--97.

\bibitem{Wein} S. Evens, J. Lu and A. Weinstein, Transverse measures, the modular
class and a cohomology pairing for Lie algebroids, \emph{Quart. J. Math. Oxford Ser. (2)} 50 (1999),  417--436.

\bibitem{FHT}Y. F\'elix, S. Halperin and J. C. Thomas,
Rational homotopy theory,
Grad. Texts in Math., 205
Springer-Verlag, New York, 2001.

	\bibitem{Gutt} S. Gutt,
	An explicit $*$-product on the cotangent bundle of a Lie group,
\emph{Lett. Math. Phys.} 7 (1983), 249--258.
	
		
		\bibitem{HL} A. Hamilton and A. Lazarev,
		Graph cohomology classes in the Batalin-Vilkovisky formalism,
		\emph{J. Geom. Phys.} 59 (2009),  555--575.
		
		\bibitem {Haz} M. Hazewinkel,
		{A duality theorem for cohomology of Lie algebras},
		\emph{Mat. Sb. (N.S.)} 83 (1970), 639--644.
		
		\bibitem{Hin} V. Hinich, {DG coalgebras as formal stacks}, \emph{J. Pure Appl. Algebra}  162 (2001), 209--250.

		\bibitem{HR} J. Holstein and  M. Rivera, Koszul duality and Calabi-Yau structures,  \texttt{arXiv:2410.03604}.

	\bibitem{Hue} J. Huebschmann,
	Duality for Lie-Rinehart algebras and the modular class,
	\emph{J. Reine Angew. Math.} 510 (1999), 103--159.

	\bibitem{Kel} B. Keller, Deformed Calabi-Yau completions, \emph{J. Reine Angew. Math.} 654 (2011), 125--180. With an appendix by Michel Van den Bergh.

\bibitem{Kel-2008}
B. Keller, Calabi-Yau triangulated categories, \emph{EMS Ser. Congr. Rep.}, (2008), 467--489.
	
	\bibitem{KH} A. Khoroshkin and  P. Tamaroff,
	Derived Poincar\'e-Birkhoff-Witt theorems,
	\emph{Lett. Math. Phys.} 113 (2023),  Paper No. 15, 20 pp. With appendix by Vladimir Dotsenko.

	

	\bibitem{Kos} J. L. Koszul,	Homologie et cohomologie des alg\`ebres de Lie,
	\emph{Bull. Soc. Math. France} 78 (1950), 65--127.

	\bibitem{Kon}	M. Kontsevich,
	Homological algebra of Mirror Symmetry, Proceedings of the International Congress of Mathematicians, Z\"urich 1994, vol. I, Birkh\"auser 1995, 120--139.

		\bibitem{Kon'} M. Kontsevich, Deformation Quantization of Poisson Manifolds, \emph{Lett. Math. Phys.} 66 (2003), 157--216.


		\bibitem{LS}
		T. Lada and J. Stasheff, Introduction to sh Lie algebras for physicists, \emph{Internat. J. Theoret. Phys.} { 32} (1993), 1087--1103.
		
		\bibitem{LM}
		T. Lada and M. Markl,  Strongly homotopy Lie algebras, \emph{Comm. Algebra} { 23} (1995),  2147--2161.
		
		\bibitem{Laz-1}
		A. Lazarev, {Maurer-Cartan moduli and models for function spaces}, {\em Adv. Math.} {235}  (2013), 296--320.
		
		


		\bibitem{Lef} K. Lef\'evre-Hasegawa. Sur les A-infini cat\'egories, \texttt{arXiv:math/0310337}.
\bibitem{manin} Y. Manin, Gauge field theory and complex geometry,
Grundlehren Math. Wiss., 289
Springer-Verlag, Berlin, 1997, xii+346 pp.

\bibitem{Mor} J. M. Moreno-Fern\'andez, The Milnor-Moore theorem for $L_\infty$-algebras in rational homotopy theory,  {\em Math. Z.} 300 (2022),  2147--2165.

\bibitem{nei} J. Neisendorfer, Lie algebras, coalgebras and rational homotopy theory for nilpotent spaces, \emph{Pacific J. Math.} 74 (1978), 429--460.		
		
		\bibitem{Pos} L. Positselski,  Two kinds of derived categories, Koszul duality, and comodule-contramodule correspondence, {\it Mem. Amer. Math. Soc} 212 (2011), vi+133 pp.
		
		\bibitem{Pri} J. Pridham, Unifying derived deformation theories,  \emph{Adv. Math.} 224 (2010), 772--826.
		
		\bibitem{Vor} T. Voronov, Q-manifolds and Mackenzie theory: An overview,  \texttt{arXiv:0709.4232v1.}
		
		
		\bibitem{Wi} S. Witherspoon, {Hochschild cohomology for algebras}, Grad. Stud. Math., 204
		AMS, Providence, RI, 2019.
		
		\bibitem{Zwi} B. Zwiebach, Closed string field theory: Quantum action and the Batalin-Vilkovisky master equation,  \emph{Nuclear Phys. B} 390 (1993), 33--152.
		
	\end{thebibliography}
\end{document}